\theoremstyle{plain}
\newtheorem{thm}{Theorem}[section]
\newtheorem{lem}[thm]{Lemma}
\newtheorem{prop}[thm]{Proposition}
\newtheorem*{thm*}{Theorem}
\newtheorem{sublem}[equation]{Lemma}
\newtheorem{subcor}[equation]{Corollary}
\newtheorem{subprop}[equation]{Proposition}
\newtheorem{subthm}[equation]{Theorem}
\theoremstyle{definition}
\newtheorem{cosa}[thm]{}
\newtheorem{subcosa}[equation]{}
\newtheorem{subex}[equation]{Example}
\theoremstyle{remark}
\newtheorem{subrem}[equation]{Remark}
\newtheorem{subrems*}{Remarks.}
\numberwithin{equation}{thm}
\newcommand{\D}{\boldsymbol{\mathsf{D}}}
\newcommand{\M}{\boldsymbol{\mathsf{M}}}
\newcommand{\LL}{\mathsf L}
\newcommand{\R}{\mathsf R}
\newcommand{\fp}{{\mathfrak p}}
\newcommand{\sE}{\mathscr{E}}
\newcommand{\sL}{\mathscr{L}}
\newcommand{\sM}{\mathscr{M}}
\newcommand{\sO}{\mathscr{O}}
\newcommand{\Dc}{\boldsymbol{\mathsf{c}}}
\newcommand{\1}{\mathbf{1}}
\newcommand{\op}{{\mathsf o\mathsf p}}
\newcommand{\ZZ}{\mathbb Z}
 \newcommand{\Rf}{\R f^{}_{\<\<*}}
\newcommand{\fst}{{f^{}_{\<\<*}}}
\newcommand{\cc}{\mathsf{c}}
\newcommand{\qc}{\mathsf{qc}}
\newcommand{\Dqc}{\D_{\mathsf{qc}}}
\newcommand\Dqcpl{\D_\qc^{\lift.95,\text{\cmt\char'053},}}
\newcommand\Dcpl{\D_\cc^{\lift.95,\text{\cmt\char'053},}}
\font\cmt=cmtex10
\newcommand{\CH}{\mathcal H}
\newcommand{\bpic}{\begin{tikzpicture}}
\newcommand{\epic}{\end{tikzpicture}}
\newcommand{\Otimes}[1]{\otimes^\LL_{#1}}
\newcommand{\sHom}{\CH om}
\newcommand{\RHqc}[1]{\R\>\sHom_{#1}^{\<\qc}}
\newcommand{\set}{\!:=}
\newcommand{\sX}{{\<\<X}}
\newcommand{\sst}{\scriptstyle}
\newcommand{\sss}{\scriptscriptstyle}
\newcommand{\smallcirc}{{\>\>\lift1,\sst{\circ},\,}}
\newcommand{\<}{\mkern-1mu}
\renewcommand{\>}{\mkern1mu}
\newcommand{\va}[1]{\vspace{#1pt}}
\newcommand{\kf}{\kern.5pt}
\def\lift#1,#2,{\vbox to 0pt{\vskip-#1 ex\hbox{$\scriptstyle #2$}\vss}}
\newcommand{\OX}{\mathcal O_{\<\<X}}
\newcommand{\OY}{\mathcal O_Y}
\newcommand{\OZ}{\mathcal O_{\<Z}}
\newcommand{\OW}{\mathcal O_W}
\newcommand{\fundamentalclass}[1]{{\boldsymbol{\mathsf{c}}}_{#1}} 
\newcommand{\fundamentalclassa}[1]{{\boldsymbol{\mathsf{a}}}_{#1}}
\newcommand{\fundamentalclassb}[1]{{\boldsymbol{\mathsf{b}}}_{#1}}
\newcommand{\circled}[1]{\textcircled{\scriptsize{#1}}}
\newcommand{\lto}{\longrightarrow}
\newcommand{\xto}{\xrightarrow}
\newcommand\iso{{\mkern8mu\longrightarrow \mkern-25.5mu{}^\sim\mkern17mu}}
\newcommand\osi{{\mkern8mu\longleftarrow \mkern-23.5mu{}^\sim\mkern17mu}}
\DeclareMathOperator{\spec}{Spec}
\DeclareMathOperator{\Hom}{Hom}
\DeclareMathOperator{\id}{id}
\DeclareMathOperator{\via}{{\textup{via}}}
\DeclareMathOperator{\supp}{supp}
\DeclareMathOperator{\Supp}{Supp}
\def\Iso{\vbox to 0pt{\vss\hbox{$\widetilde{\phantom{nn}}$}\vskip-7pt}}
\newcommand{\vG}{\varGamma}
\newcommand{\env}[1]{{#1}^{\mathsf e}}
\def\cA #1; #2;{\cite[p.\,#1, #2]{A}}
\def\cT #1; #2;{\cite[p.\,#1, #2]{T}}
\def\lift#1,#2,{\vbox to 0pt{\vskip-#1 ex\hbox{$\scriptstyle #2$}\vss}}
\def\drlm#1{\underset{\vtop{\vskip-4.2pt\hbox to 14pt{\rightarrowfill} \vskip-10pt\hbox{$\scriptstyle \ #1$}}}\to\lim\,}
\def\dirlm#1{\lim\hskip-1.65em\lower1.37ex
       \hbox{\smash[b]{$
                   \underset{\lift 1.37,
                                         {\hbox to 0pt{\hss$\scriptscriptstyle#1$\hss}},
                                  }
                     {\:\hbox to 1.37em {\rightarrowfill}}
               $} }                      
     \!\<}
\begin{document}

\title[Relation between two twisted inverse image pseudofunctors]{Relation between two twisted inverse image pseudofunctors in duality theory}

\author[S.\,B.\,Iyengar]{Srikanth B.~Iyengar} 
\email{s.b.iyengar@unl.edu}
\address{Department of Mathematics,  University of Nebraska, Lincoln, NE 68588, U.S.A.}

\author[J.\,Lipman]{Joseph Lipman} 
\email{jlipman@purdue.edu}
\address{Department of Mathematics,  Purdue University, West Lafayette IN 47907, U.S.A.}

\author[A.\,Neeman]{Amnon Neeman} 
\email{Amnon.Neeman@anu.edu.au}
\address{Centre for Mathematics and its Applications, Mathematical Sciences Institute
Australian National University, Canberra, ACT 0200, Australia.}

\thanks{This article is based on work supported by the National Science Foundation under Grant No. 0932078000, while the authors were in residence at the Mathematical Sciences Research Institute in Berkeley, California, during the Spring semester of 2013.  The first author was partly supported by NSF  grant DMS 1201889 and a Simons Fellowship.}
  
\keywords{Grothendieck duality, twisted inverse image pseudofunctors,
Hochschild derived functors,  relative perfection, relative dualizing complex,
fundamental class.}

\subjclass[2010]{Primary: 14F05, 13D09. Secondary:  13D03}


\begin{abstract}  Grothendieck duality theory assigns to essentially-finite-type maps~
$f$ of noetherian schemes a pseudofunctor $f^\times$ 
right-adjoint to $\R\fst$, and a pseudofunctor $f^!$ agreeing with $f^\times$ when
$f$ is proper, but equal to the usual inverse image $f^*$ when $f$ is \'etale. We define and study a canonical map from the first pseudofunctor to the second. This map behaves well with respect to flat base change, and is taken to an isomorphism by ``compactly supported" versions of standard derived functors. Concrete realizations are described, for instance for maps of affine schemes. Applications include proofs of reduction theorems for Hochschild homology and cohomology, and of a remarkable formula for the fundamental class of 
a flat map of affine schemes.
 \end{abstract}

\maketitle

\section*{Introduction}
The relation in the title is given by a canonical pseudofunctorial map $\psi\colon(-)^\times\to (-)^!$ between ``twisted inverse image" pseudofunctors with which Grothendieck duality theory is concerned. These pseudofunctors on the category $\sE$ of  essentially-finite\kf-type separated maps of noetherian schemes take values in bounded-below derived categories  of complexes with 
quasi-coherent homology, see \ref{^!} and~\ref{^times}.\va{.6}  The map $\psi$, derived from the pseudofunctorial ``fake unit map" $\id\to(-)^!\smallcirc\R(-)_*$ of Proposition~\ref{fake unit}, is specified\ in Corollary~\ref{relation}.  A number of concrete examples appear in 
\S\ref{section:examples}. For instance, if  $f$ is a map in~$\sE$, then $\psi(f)$ is an isomorphism if $f$ is proper; but if $f$ is, say, an open immersion, so that 
$f^!$ is the usual inverse image functor~$f^*$ whereas $f^\times$ is right-adjoint to 
$\R\fst\>$, then $\psi(f)$ is usually quite far from being an isomorphism (see e.g., ~\ref{affine locimm}, ~\ref{varGM} and ~\ref{affine/k}).

After some preliminaries are covered in \S\ref{prelims}, the definition of the pseudofunctorial map $\psi$ is worked out at the beginning of \S\ref{basic map}.
Its good behavior with respect to flat base change is given by Proposition~\ref{base change}.

The rest of Section~\ref{basic map} shows that under suitable ``compact support" conditions,  various operations from duality theory take $\psi$ to an isomorphism.
To wit:

Let $\Dqc(X)$ be the derived category of $\OX$-complexes with 
quasi-coherent homology, and let $\RHqc{\sX}(-,-)$ be
the internal hom in the closed category $\Dqc(X)$ (\S1.5).
Proposition~\ref{Gampsi} says: 

\noindent\emph{If\/ $f\colon X\to Y$ is a map in\/ $\sE$,
if\/ $W$ is a union of closed subsets of\/~$X$  to each of which the restriction 
of\/~$f$ is proper, and if\/ $E\in\Dqc(X)$ has support contained in\/~$W\<,$ then each of the  functors\/ $\R\vG^{}_{\!W}(-),$ $E\Otimes{\sX}(-)$ and\/  $\RHqc{\sX}(E,-)$  takes the map\/ $\psi(f)\colon f^\times\to f^!$ to an isomorphism.}  

The proof uses properties of a bijection between subsets of $X$ and ``localizing tensor ideals" in $\Dqc(X)$, reviewed\- in Appendix~\ref{Support}. A consequence is that even for nonproper~$f$,  $f^!$~still~has dualizing properties
for complexes having support in such a $W$ (Corollary~\ref{supports}); and there results, for $d=\sup\{\,\ell\mid H^\ell f^!\OY\ne0\,\}$ and $\omega^{}_{\!f}$  a relative dualizing sheaf, a ``generalized residue map"\looseness=-1 
\[
\int_{\>W}\<\colon H^d\Rf \R\vG^{}_{\!W}(\omega^{}_{\!f})\to\OY.
\]

Proposition~\ref{pullback1} says that
\emph{for\/ $\sE$-maps \mbox{$W\xto{\,\lift.8,g,\,}X\xto{\,\lift1.2,f,\,} Y$}\va{.75} of noetherian schemes such that\/
$fg$ is proper, and any\/ $F\in\Dqc(X),$  $G\in\Dqcpl(Y),$ the functors\/ $\LL g^*\RHqc{\sX}(F,-)$ and
\/$g^\times\RHqc{\sX}(F,-)$ both take the map\/ 
$\psi(f)G\colon f^\times G\to f^!G$ to an isomorphism.}

Section 3 gives some concrete realizations of $\psi$. Besides the examples
mentioned\- above, one has that if $R$ is a noetherian ring, $S$ a flat essentially-finite\kf-type $R$-algebra, $f\colon\spec S\to\spec R$ the corresponding scheme\kf-map, and $M$ an $R$-module, with sheafification $\sM$, then with $\env{S}\set S\otimes_R S$, the map $\psi(f)(\sM)\colon f^\times\sM\to f^!\sM$ is the sheafification of a simple $\D(S)$-map
\begin{equation}\label{affmap}
\R\<\Hom_R(S,M)\to S\Otimes{\env S}\R\<\Hom_R(S,S\otimes_R M),
\end{equation}
described in Proposition~\ref{R1.1.3.5}. So if $S\to T$ is an $R$-algebra map 
with $T$ module\kf-finite over~$R$, then, as above,  the functors
$T\Otimes{S}-$ and $\R\<\Hom_S(T, -)$ take \eqref{affmap}  to an isomorphism.

In the case where $R$ is a field, more information about the map~ \eqref{affmap} appears in Proposition~\ref{affine/k}\kf: 
the map is represented by a split $S$-module surjection with an enormous kernel.

In \S4, there are two applications of the map $\psi$. The first is to a ``reduction theorem" for the Hochschild homology of flat $\sE$-maps that was stated in \cite[Theorem 4.6]{AILN} in algebraic terms (see \eqref{AILN4.6} below), with only an indication of proof. The scheme\kf-theoretic version appears here in ~\ref{global4.6}. 

The paper \cite{AILN} also treats the nonflat algebraic case, where
$\env S$ becomes a derived tensor product. In fact, we conjecture that
the natural home of the reduction theorems is in a more general 
derived-algebraic-geometry setting.

The special case \ref{affreldual} of \eqref{AILN4.6} gives a canonical description of the relative dualizing sheaf $f^!\OY$ of a flat $\sE$-map $f\colon X\to Y$ between affine schemes. The proof is based on the known theory of $f^!$, which is constructed using arbitrary choices, such
as a compactification of $f$ or a factorization of $f$ as smooth$\smallcirc$finite; but 
the choice\kf-free formula \ref{affreldual} might be a jumping-off point for a choice\kf-free redevelopment of the underlying theory.

The second application is to a simple formula for the \emph{fundamental  class} of a flat map $f$ of affine schemes.
The fundamental class of a flat $\sE$-map \mbox{$g\colon X\to Y$}---a globalization of the Grothendieck residue map---goes from the Hochschild complex of $g$ to the relative dualizing complex~$g^!\OY$. This map is
defined in terms of sophisticated abstract notions from duality theory (see \eqref{fclass}). 
But for maps $f\colon\spec S\to \spec R$ as above, Theorem~\ref{explicit fc} says that,
with $\mu\colon S\to  \Hom_R(S,S)$  the $\env S$-homomorphism  taking\/ $s\in S$ to multiplication by\/ $s$, \emph{the fundamental class\/ is isomorphic to the sheafification of the natural composite map}
\[
S\Otimes{\env S}S\xto{\<\id\<\otimes\>\mu\>} S\Otimes{\env S}\Hom_R(S,S)\lto 
S\Otimes{\env S}\R\<\<\Hom_R(S,S).
\]
 
\medskip

\section{Preliminaries: twisted inverse image functors, essentially finite-type compactification, conjugate maps}
\label{prelims}

\begin{cosa}\label{^!} 
For a scheme $X\<$, $\D(X)$ is the derived category of $\OX$-modules, and 
$\Dqc(X)$ ($\Dqcpl(X)$) is the full subcategory spanned by the complexes with quasi-coherent cohomology modules (vanishing in all but finitely many negative degrees). We will use freely some standard functorial maps, for instance the 
projection isomorphism associated to a map $f\colon X\to Y$ of noetherian schemes (see, e.g., \cite[3.9.4]{li}):
\[\R\>\fst E\Otimes{Y}\< F\iso\R\>\fst(E\Otimes{X} \<\LL f^*\<\<F)
\quad\ \big(E\in\Dqc(X), \,F\in\Dqc(Y)\big).
\] 

Denote by $\sE$  the category of \emph{separated essentially-finite\kf-type maps of noetherian schemes.}
By \cite[5.2 and 5.3]{Nk}, there is a contravariant \mbox{$\Dqcpl$-valued} pseudo\-functor $(-)^!$ over $\sE$, determined up to isomorphism by the properties:\va2

{\rm(i)} The pseudofunctor $(-)^!$ restricts over the subcategory of proper maps in $\sE$ to a right adjoint of the derived direct-image pseudofunctor.\va1

{\rm(ii)} The pseudofunctor $(-)^!$ restricts over the subcategory of formally \'etale maps in $\sE$ to the usual inverse\kf-image pseudofunctor~$(-)^*$.\va1

{\rm(iii)} For any fiber square in\/ $\sE\!:$\va2
$$
\CD
\bullet @>v>> \bullet  \\
@V g VV @VV f V \\
\bullet@>{\vbox to 0pt{\vss\hbox{$\Xi$}\vskip.21in}} > \lift1.2,u, >\bullet
\endCD
$$
with $f,g$ proper\ and $ u,v$ formally \'etale, the base\kf-change map $\beta_{\>\Xi}$,  defined to be the
adjoint of the natural composition
\begin{equation}
\label{bcadj}
\R g_*v^*\!f^!\iso u^*\Rf f^! \longrightarrow u^*\<,
\end{equation}
is equal to the natural composite isomorphism
\begin{equation}\label{beta}
v^*\<\<f^!= v^!\<\<f^!
\iso (fv)^!=(ug)^!\iso
g^!u^!  =g^!u^*.
\end{equation}

\goodbreak

There is in fact a family of base-change \emph{isomorphisms} 
\begin{equation}\label{bch}
\beta_{\>\Xi}\colon v^*\<\<f^!\iso g^!u^*,
\end{equation}
indexed by \emph{all} commutative $\sE$-squares 
$$
\CD
\bullet @>v>> \bullet  \\
@V g VV @VV f V \\
\bullet@>{\vbox to 0pt{\vss\hbox{$\Xi$}\vskip.21in}} > \lift1.2,u, >\bullet
\endCD
$$
 that are such that in the associated diagram (which exists in $\sE$, see  \cite[\S2.2]{Nk})\looseness=-1
\[
\CD
\bullet @>i>>\bullet @>w>> \bullet  \\
@.@V h VV @VV f V \\
@. \bullet@>{\vbox to 0pt{\vss\hbox{$\Xi'$}\vskip.21in}} > \lift1.2,u, >\bullet
\endCD
\]
it holds that $\Xi'$ is a fiber square, $wi=v$ and $hi=g$, the map $u$ is flat and $i$ is formally \'etale, a family that is the unique such one that behaves transitively with~ respect to vertical and horizontal composition of such $\Xi$ (cf.~ \cite[(4.8.2)(3)]{li}), and 
satisfies:\va3

{\rm(iv)} if $\Xi$ is a fiber square with $f$ proper then the map $\beta_{\>\Xi}$ is
adjoint to the composite map \eqref{bcadj};\va2

{\rm(v)} if $f$---hence $g$---is formally \'etale, so that $f^!=f^*$ and
$g^!=g^*\<,$ then
$\beta_{\>\Xi}$~is the natural isomorphism $v^*\<\<f^*\iso g^*u^*$; and\va2

{\rm(vi)} if $u$---hence $v$---is formally \'etale, so that $u^*=u^!$ and
$v^*=v^!\<,$ then
$\beta_{\>\Xi}$~is the natural isomorphism \eqref{beta}.\va3

\noindent (For further explanation see \cite[Thm.\,4.8.3]{li} and \cite[\S5.2]{Nk}.)\va4

\noindent\emph{Remark.} With regard to (vi),  if $\Xi$ is \emph{any} commutative $\sE$-diagram with
$u$ and $v$ formally \'etale, then in the associated diagram
$i$ is necessarily formally \'etale (\cite[(17.1.3(iii) and 17.1.4]{EGA4}), so that $\beta_{\>\Xi}$ exists
\va1 (and can be identified with the canonical isomorphism $v^!\<\<f^!\iso g^!u^!$).
\end{cosa}

\begin{cosa}\label{^times}

For \emph{any} $\sE$-map $f\colon X\to Y\<$, there exists a functor 
$f^\times\colon \D(Y)\to\Dqc(X)$ that is bounded below and right-adjoint to 
$\Rf\>$. There results a $\Dqc$\kf-valued pseudofunctor $(-)^\times$ on $\sE$, for which the said adjunction is pseudo\-functorial
 \cite[Corollary (4.1.2)]{li}. Obviously, the restriction 
of~$(-)^\times$ to $\Dqcpl$ over proper maps in $\sE$
is isomorphic to that of~$(-)^!$. Accordingly, we will identify these two restricted pseudofunctors.

\end{cosa}

\begin{cosa}
\label{locimm}
Nayak's construction of $(-)^!$ is based on his extension \cite[p.\,536, Thm.\,4.1]{Nk}
of Nagata's compactification theorem, to wit, that any map~$f$ in~$\sE$ factors as
$pu$ where $p$ is proper  and $u$ is a localizing immersion (see below).  Such a factorization is called a \emph{compactification of} $f$.\va2

A \emph{localizing immersion} is an $\sE$-map $u\colon X\to Y$  for which 
every~\mbox{$y\in u(X)$} has a neighborhood 
$V=\spec A$ such that $u^{-1}V=\spec A_M$ for some multiplicatively closed subset $M\subseteq A$, see \cite[p.\,532, 2.8.8]{Nk}. For example, \emph{finite\kf-type} localizing immersions are\va1 just open immersions \cite[p.\,531, 2.8.3]{Nk}.

Any localizing immersion $u$ is formally \'etale, so that $u^!=u^*\<$.\va1
\end{cosa}

\begin{cosa}
\label{cosa:localizing-immersion}
Any localizing immersion $u\colon X\to Y$ is a flat monomorphism, whence 
\emph{the natural map\/ 
$\epsilon^{}_1\colon u^*\R u_*\iso\id_\sX\>$ is an isomorphism}: associated to the fiber~square\looseness=-1
\[
\CD
X\times_Y X @>p^{}_1>> X\\
@Vp^{}_2VV @VVuV\\
X @>>u > Y
\endCD
\]
there is the flat base\kf-change isomorphism $u^*\R u_*\iso \R p^{}_{2\>*}p_1^*$, 
and since $u$ is a monomorphism, $p^{}_1$ and $p^{}_2$ are equal isomorphisms, so that 
$\R p_{2\>*}p_1^*=\id_\sX$.

That $\epsilon^{}_1$ is an isomorphism means that the natural map is an isomorphism
\[
\Hom_{\D(X)}(E,F)\iso\Hom_{\D(Y)}(\R u_*E, \R u_*F)\qquad(E,F\in\D(X)),
\]
which implies  that the natural map $\eta^{}_{\>2}\colon\id_\sX\to u^\times \R u_*$ is  an isomorphism. 

\smallskip
\begin{small} Conversely,  \emph{any flat monomorphism\/ $f$ in\/ $\sE$ is a localizing immersion,} 
which can be  seen as follows. Using \cite[2.7\kf]{Nk} and \cite[8.11.5.1 and 17.6.1]{EGA4} one reduces to where $f$ is a  map of affine schemes, corresponding to a composite ring map $A\to B\to B_M$ with $A\to B$ \'etale and $M$ a multiplicative submonoid of~$B$. The kernel of multiplication 
 $B\otimes_A B\to B$ is generated by an idempotent $e$,  and 
 $B_M\otimes_A B_M\to B_M$ is an isomorphism, so $e$ is annihilated by an element of the form $m\otimes m\ (m\in M)$. Consequently, $B[1/m]\otimes B[1/m]\to B[1/m]$ is an isomorphism, and so replacing~$B_M$ by $B[1/m]$ reduces the problem further to the case where 
$A\to B_M$ is a finite\kf-type algebra. Finally, localizing $A$ with respect to its  submonoid of elements that are sent to units in $B_M$, one may assume further that $f$ is surjective, in which 
case \cite[17.9.1]{EGA4} gives that $f$ is an isomorphism.
\end{small}
\end{cosa}

\begin{cosa}\label{RHom^qc} 
For a noetherian scheme $X\<$, the functor
$\id_{\sX}^\times$ specified in~\S\ref{^times} is right-adjoint to the inclusion
$\Dqc(X)\hookrightarrow\D(X)$. It is sometimes called the \emph{derived quasi-coherator.} 

For any $C\in\Dqc(X)$, the unit map is an isomorphism $C\iso \id_X^\times\< C$.

For any complexes $A$ and $B$ in $\Dqc(X)$, set
\begin{equation}\label{RHqc}
\RHqc{\sX}(A,B)\set \id_X^\times\R\sHom_\sX(A,B)\in\Dqc(X).
\end{equation}

Then for $A$  in $\Dqc(X)$, 
the functor $\RHqc{\sX}(A,-)$ is right-adjoint to the endofunctor 
$-\Otimes{\sX} A$ of $\>\Dqc(X)$. 
Thus, $\Dqc(X)$ is a closed category with multiplication given by $\Otimes{\sX}$ and internal hom given by 
$\RHqc{\sX}$.

As above, the canonical $\D(X)$-map $\RHqc{\sX}(A,B)\to\R\sHom_\sX(A,B)$
is an \emph{isomorphism} whenever $\R\sHom_\sX(A,B)\in\Dqc(X)$---for example, whenever $B\in\Dqcpl(X)$ and the cohomology sheaves $H^iA$ are coherent for all $i$, vanishing for $i\gg 0$ \cite[p.\,92, 3.3]{RD}.
\end{cosa}

\begin{cosa}\label{conjugate} For categories $P$ and $Q$, let Fun$(P,Q)$ be the category of functors from $P$ to $Q$, and let Fun$^\textup{L}(P,Q)$ 
(resp.~Fun$^\textup{R}(P,Q))$ be the full subcategory spanned by the objects  that have right (resp.~left) adjoints. There is a contravariant isomorphism of categories
\[
\xi\colon\textup{Fun}^\textup{L}(P,Q)\iso\textup{Fun}^\textup{R}(P,Q)
\]
that takes any map of functors to the \emph{right-conjugate} map between the respective
right adjoints (see e.g., \cite[3.3.5--3.3.7\kf]{li}). The image under $\xi^{-1}$ of a map
of functors is its \emph{left-conjugate} map. The functor $\xi$ (resp.~$\xi^{-1}$)  takes isomorphisms of functors to isomorphisms.

For instance, for any $\sE$-map $f\colon X\to Z$ there is a bifunctorial \emph {sheafified duality}
isomorphism, with $E\in\Dqc(X)$ and $ F\in\Dqc(Z)$:
\begin{equation}\label{sheaf dual}
\R\>\fst\RHqc{\sX}(E, f^\times\<\< F\>)\iso \RHqc{Z}(\R\>\fst E,F\>),
\end{equation}
right-conjugate, for each fixed~$E$,  to the projection isomorphism 
\[
\mkern-24mu \R\>\fst(\LL f^*G\Otimes{\sX} E)\osi G\Otimes{\<\<Z}\R\>\fst E.
\]

Likewise, there is a functorial isomorphism 
\begin{equation}\label{^timesHom}
\RHqc{W}(\LL f^*G, \>f^\times\<\<H)\iso f^\times \RHqc{\sX}(G, H)
\end{equation}
right-conjugate  to the  projection isomorphism 
$\R \fst(E\Otimes{\sX} \<\LL f^*G)\osi\R\>\fst E\Otimes{\<\<Z} G.$

\end{cosa}
\section{The basic map}\label{basic map}
In this section we construct a pseudofunctorial map $\psi\colon(-)^\times\to (-)^!$. The construction is based on the following  ``fake unit" map.

\pagebreak

\begin{prop} 
\label{fake unit}
Over\/ $\sE$ there is a unique pseudofunctorial map 
\[
\eta\colon\id \to (-)^!\smallcirc\R(-)_*
\] 
whose restriction to the subcategory of proper maps in\/ $\sE$ is the unit of the adjunction between\/ $\R (-)_*$ and\/ $(-)^!,$ and such that if\/ $u$ is a localizing immersion then\/ $\eta(u)$ is inverse to the isomorphism\/ $u^!\>\R u_*= u^*\R u_*\iso\id$ in~\ref{cosa:localizing-immersion}.\looseness=-1
\end{prop}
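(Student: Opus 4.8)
The plan is to \emph{define} the map $\eta$ and then verify its properties. For an $\sE$-map $f\colon X\to Y$, choose a compactification $f=pu$ as in \ref{locimm} (so that $p$ is proper and $u$ is a localizing immersion), and let $\eta(f)$ be the composite
\[
\id \xrightarrow{\,\eta(u)\,} u^!\R u_* \xrightarrow{\,u^!(\eta(p)\R u_*)\,} u^!p^!\R p_*\R u_* \iso f^!\R f_*,
\]
where $\eta(u)$ is the inverse of the isomorphism $u^!\R u_*=u^*\R u_*\iso\id$ of \ref{cosa:localizing-immersion}, $\eta(p)$ is the unit of the adjunction $\R p_*\dashv p^!$ of \ref{^!}(i), and the last isomorphism comes from the pseudofunctor structures of $(-)^!$ and $\R(-)_*$. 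Granting that this composite is well defined, the two required normalizations drop out at once from the trivial compactifications $f=f\circ\id$ (for $f$ proper) and $f=\id\circ f$ (for $f$ a localizing immersion); and uniqueness is then automatic, since the pseudofunctoriality relation applied to $f=pu$ forces any $\eta$ as in the statement to coincide with the composite above.

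The real content is therefore (a) independence of the chosen compactification, and (b) pseudofunctoriality of $f\mapsto\eta(f)$. Both I would establish using the bookkeeping that underlies Nayak's proof that $(-)^!$ is a pseudofunctor \cite{Nk}. For (a): any two compactifications of $f$ are dominated by a third, so it is enough to compare the composites attached to two compactifications $(V_1,u_1,p_1)$ and $(V_2,u_2,p_2)$ linked by a morphism of compactifications $h\colon V_1\to V_2$ (so $hu_1=u_2$ and $p_2h=p_1$), a morphism that is automatically proper; expanding $\eta(p_1)=\eta(p_2h)$ through $\eta(p_2)$ and $\eta(h)$ by the (known) pseudofunctoriality of the proper unit, the comparison collapses to the assertion:

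\medskip
\noindent\textbf{Compatibility Lemma.} \emph{If $u\colon X\to V$ is a localizing immersion and $h\colon V\to V'$ is proper with $hu$ again a localizing immersion, then}
\[
\eta(hu)=\Big[\,\id\xrightarrow{\,\eta(u)\,}u^!\R u_*\xrightarrow{\,u^!(\eta(h)\R u_*)\,}u^!h^!\R h_*\R u_*\iso(hu)^!\R(hu)_*\,\Big].
\]

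\medskip
\noindent For (b) one picks compactifications of $f$ and of a second $\sE$-map $g$, compactifies the connecting map so as to splice them into a compactification of $gf$ (using that a composite of two localizing immersions is again a localizing immersion), and chases the definition; apart from the known pseudofunctoriality of $(-)^!$, of $\R(-)_*$ and of the proper unit, the only inputs are the Compatibility Lemma and the analogous, easier, transitivity of $\eta$ along composites of localizing immersions (a consequence of the fiber-square description in \ref{cosa:localizing-immersion} and of transitivity of flat base change).

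The step I expect to be the main obstacle is the Compatibility Lemma, which is precisely where the two competing normalizations of $\eta$ must be reconciled. The plan to prove it: after taking inverses it becomes an equality of two maps $u^*\R u_*\to\id$; one unwinds $\eta(hu)^{-1}$ (the counit $(hu)^*\R(hu)_*\to\id$) via flat base change through the fiber square of $hu$ over $V'$, as in \ref{cosa:localizing-immersion}, relates that square to the fiber square of the proper map $h$ pulled back along $u$ — so that the unit and counit of $\R h_*\dashv h^!$ enter through condition (iv) of \ref{^!} — and then matches the two sides. As in the last paragraph of \ref{cosa:localizing-immersion}, one may first localize to reduce to a ring map $A\to B\to B_M$ with $A\to B$ \'etale, which makes all the fiber products and the maps in sight completely explicit.
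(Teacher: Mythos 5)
Your definition of $\eta(f)$ via a compactification $f=pu$ is exactly the paper's, and so are the uniqueness argument and the reduction of independence-of-compactification to the case of one compactification dominating another. The genuine gap is in step (b). The key lemma the paper proves (Lemma~\ref{compat}) concerns a general commutative square $fv=ug$ with $f,g$ \emph{proper} and $u,v$ localizing immersions; your Compatibility Lemma is only its special case $g=\id$, i.e.\ the case where the composite $hu$ is again a localizing immersion. That special case is indeed what the domination argument for (a) needs, but it does not suffice for pseudofunctoriality. In the splice you describe, the compactification $u_g\>p_f=PU$ of the connecting map produces a square whose horizontal arrows $U,u_g$ are localizing immersions and whose vertical arrows $P,p_f$ are proper, with composite $PU=u_g\>p_f$ a general $\sE$-map; when you expand $\eta(gf)$ (using transitivity along $Uu_f$ and pseudofunctoriality of the proper unit) against $f^!\eta(g)f_*\smallcirc\eta(f)$, the comparison collapses precisely to Lemma~\ref{compat} for \emph{this} square, with $g=p_f\neq\id$. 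Neither your Compatibility Lemma nor ``transitivity of $\eta$ along composites of localizing immersions'' covers it, and it is not a formal consequence of them: the interchange of the proper unit with the localizing-immersion isomorphism across such mixed squares is exactly the nontrivial gluing datum (in the paper it is subdiagram \circled4 of the pseudofunctoriality diagram, resting on the general Lemma~\ref{compat}, whose essential step is the adjunction computation of the type \cite[Exercise 3.10.4(b)]{li} together with property (vi) of \S\ref{^!}).

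This matters also because your proposed proof of the Compatibility Lemma would not upgrade to the needed generality: it exploits that every map in sight is invertible (true only because the composite is a localizing immersion) and ends with a reduction ``by localizing'' to an affine ring map, but equality of maps of functors on derived categories cannot be checked locally (a nonzero map in $\D(X)$ can vanish on an open cover), so even for the special case that reduction needs justification. The paper avoids both issues by proving the square lemma for arbitrary proper $g$ purely formally, by an adjunction/diagram chase, and then quoting it twice: with $g=\id$ for independence of compactification, and with $g$ a nontrivial proper map for pseudofunctoriality. To repair your argument, either prove the Compatibility Lemma in that general square form, or reduce the general square to a cartesian one (using flat base change and condition (iv) of \S\ref{^!}) plus your special case — but some such statement must be added; as written, step (b) does not follow from the ingredients you list.
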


The proof uses the next result---in which the occurrence of~$\beta_{\>\Xi}$ is justified by the remark at the end of \S\ref{^!}.  As we are dealing only with functors between derived categories, we will reduce clutter by writing $h_*$ for $\R h_*$
($h$~any map in $\sE$).

\begin{sublem} 
\label{compat}
Let\/ $\Xi$ be a commutative square  in $\sE\colon$
$$
\CD
\bullet @>v>> \bullet  \\
@V g VV @VV f V \\
\bullet@>{\vbox to 0pt{\vss\hbox{$\Xi$}\vskip.21in}} > \lift1.2,u, >\bullet
\endCD
$$
with $f,g$ proper\ and $ u,v$ localizing immersions. Let\/ $\phi^{}_{\>\Xi}\colon v_*g^!\to f^!u_*$ be the
functorial map adjoint to the natural composite map\/
\mbox{$\fst v_*g^!\<\<\iso\<\< u_*g_*g^!\to u_*.$}
Then the following natural diagram commutes.

\[
\def\1{$\id$}
\def\2{$v^*\<v_*$}
\def\3{$g^!\<g_*$}
\def\4{$v^*\<v_*g^!\<g_*$}
\def\5{$v^*\<\<f^!\<\fst v_*$}
\def\6{$v^*\<\<f^!\<u_*g_*$}
\def\7{$g^!u^*\<u_*g_*$}
\def\8{$v^!\<f^!\<\fst v_*$}
\def\9{$(fv)^!(fv)_*$}
\def\ten{$(ug)^!(ug)_*$}
\def\lvn{$g^!u^!\<u_*g_*$}
 \bpic[xscale=5, yscale=1.45]

   \node(11) at (1,-1){\2};
   \node(12) at (2,-1){\1};   
   \node(13) at (3,-1){\3};
    
   \node(22) at (2,-2){\4};
   
   \node(32) at (2,-3){\6};
   
   \node(41) at (1,-3){\5};
   \node(43) at (3,-3){\7};

   \node(51) at (1,-4){\8};
   \node(52) at (1.66,-4){\9};
   \node(53) at (2.33,-4){\ten};
   \node(54) at (3,-4){\lvn};
   
   \draw[<-] (11)--(12) node[above=1pt, midway, scale=.75]{$\Iso$};
   \draw[->] (12)--(13) node[above=1pt, midway, scale=.75]{\textup{unit}};

  \draw[->] (41)--(32) node[above=1pt, midway, scale=.75]{$\Iso$};
  \draw[->] (32)--(43)  node[above=1pt, midway, scale=.75]{$\Iso$}
                                  node[below=1pt, midway, scale=.75]{$\beta_{\>\Xi}$};
     
   \draw[->] (51)--(52) node[above, midway, scale=.75]{$\Iso$};
   \draw[double distance=2pt] (52)--(53);
   \draw[->] (53)--(54) node[above, midway, scale=.75]{$\Iso$};

   \draw[->] (11)--(41) node[left=1pt, midway, scale=.75]{$v^*\textup{unit}$};  
 
    \draw[->] (22)--(32) node[left=1pt, midway, scale=.75]{$v^*\phi^{}_{\>\Xi}$};  
 
   \draw[double distance=2pt] (41)--(51) ;
   \draw[->] (13)--(43) node[right=1pt, midway, scale=.75]{$\simeq$}; 
   \draw[double distance=2pt] (43)--(54) ;
   
  \draw[->] (11)--(22)  node[above=-6pt, midway, scale=.75]{\rotatebox{-16.5}{$\mkern43mu v^*v_*\textup{unit}$}};
  \draw[<-] (22)--(13)  node[above=-1pt, midway, scale=.75] {$\rotatebox{16.5}\Iso$};
   \node at (2,-1.5)[scale=.9]{\circled1};
   \node at (1.5,-2.35)[scale=.9]{\circled2};
   \node at (2.5,-2.35)[scale=.9]{\circled3};   
   \node at (2,-3.56)[scale=.9]{\circled4};                                                         
                                                      
  \epic
\]

\end{sublem}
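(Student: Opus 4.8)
The plan is to verify, one region at a time, that the four quadrilaterals \circled{1}--\circled{4} into which the displayed diagram is already subdivided commute; since they tile the diagram, this proves the assertion.

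For \circled{1} I would observe that it is bounded, up to whiskering, by the isomorphism $\id\iso v^*\<v_*$ inverse to the counit isomorphism of \ref{cosa:localizing-immersion} and by the unit $\id\to g^!g_*$ of the adjunction $(g_*,g^!)$; these act in independent positions, so \circled{1} commutes by the interchange law for natural transformations. For \circled{2}, which carries the definition of $\phi^{}_{\>\Xi}$, I would substitute that definition---$\phi^{}_{\>\Xi}$ is the composite $v_*g^!\to f^!\<\fst v_*g^!\iso f^!u_*g_*g^!\to f^!u_*$ assembled from the unit of $(\fst,f^!)$, the commutativity isomorphism $\fst v_*\iso u_*g_*$, and the counit of $(g_*,g^!)$---into the edge labelled $v^*\phi^{}_{\>\Xi}$ and then rearrange: moving the unit of $(\fst,f^!)$ past the unit of $(g_*,g^!)$ by naturality and applying one triangle identity for $(g_*,g^!)$ to collapse the resulting unit--counit pair leaves precisely the other boundary path of \circled{2}.

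For \circled{4}, which carries the definition of $\beta_{\>\Xi}$, I would use that $u$ and $v$ are localizing immersions, hence formally \'etale, so that the Remark at the end of \S\ref{^!} identifies $\beta_{\>\Xi}$ with the composite \eqref{beta} of pseudofunctorial structure isomorphisms of $(-)^!$. Since the commutativity isomorphism $\fst v_*\iso u_*g_*$ entering the diagram is by definition the composite $\fst v_*\iso(fv)_*=(ug)_*\iso u_*g_*$ of structure isomorphisms of $\R(-)_*$, the two boundary paths of \circled{4} then differ only in the order in which the $(-)^!$-structure isomorphisms (acting on the $f^!$-part) and the $\R(-)_*$-structure isomorphisms (acting on the $\fst v_*$-part) are applied, so \circled{4} again commutes by the interchange law.

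The substantive region is \circled{3}, where $\phi^{}_{\>\Xi}$---manufactured from the adjunction $(\fst,f^!)$ and the commutativity of $\Xi$---has to be reconciled with $\beta_{\>\Xi}$---manufactured from the pseudofunctorial structure of $(-)^!$. Here the plan is to substitute both descriptions, slide $\beta_{\>\Xi}$ leftward along the composite by its naturality until it reaches the unit of $(\fst,f^!)$, and then invoke that for the localizing immersions $u$ and $v$ the counits $u^*u_*\iso\id$ and $v^*v_*\iso\id$ of \ref{cosa:localizing-immersion} are isomorphisms; the resulting cancellations should leave exactly the standard compatibility between the adjunction unit of $(-)^!$ over proper maps and the structure isomorphisms of $(-)^!$---a compatibility built into Nayak's construction \cite[\S5.2]{Nk}. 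This last reconciliation is the step I expect to require the most care, since the other three regions reduce to the interchange law, naturality, and triangle identities, whereas \circled{3} is the one point at which the concrete recipe for $\phi^{}_{\>\Xi}$ and the abstract recipe for $\beta_{\>\Xi}$ must be matched head-on.
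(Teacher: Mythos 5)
Your subdivision into the four regions \circled1--\circled4 and your handling of three of them agree in substance with the paper's proof. Region \circled1 is indeed immediate from naturality/interchange. Your direct chase for \circled2 --- unpack $\phi^{}_{\>\Xi}$ as $v_*g^!\to f^!\R\fst v_*g^!\iso f^!\R u_*g_*g^!\to f^!\R u_*$, slide the unit of $\R\fst\dashv f^!$ past the unit of $g_*\dashv g^!$, and collapse with one triangle identity --- is equivalent to what the paper does, namely drop $v^*$ and pass to the adjoint diagram under $\R\fst\dashv f^!$, whose commutativity is then obvious. And \circled4 is, as you say, exactly condition (vi) of \S\ref{^!} (the Remark there being what legitimizes $\beta_{\>\Xi}$ for this not-necessarily-fiber square), combined with the interchange law for the $(-)^!$- and $\R(-)_*$-structure isomorphisms.

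The gap is in \circled3, which is also the only place where real work is required. After your substitution and the sliding of $\beta_{\>\Xi}$, what remains is \emph{not} a citable ``standard compatibility between the adjunction unit of $(-)^!$ over proper maps and the structure isomorphisms of $(-)^!$'': since $u$ and $v$ are in general not proper, $u^!=u^*$ and $v^!=v^*$ are not right adjoints of $\R u_*$, $\R v_*$, and the maps $\id\to v^*\R v_*$, $\id\to u^*\R u_*$ occurring in the diagram are inverses of the counit isomorphisms of \ref{cosa:localizing-immersion}, not adjunction units for $(-)^!$; so the pseudofunctoriality of the proper adjunction (\cite[Corollary (4.1.2)]{li}) does not apply directly. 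What your cancellations actually leave is the assertion that the unit $\id\to f^!\R\fst$, transported through $\beta_{\>\Xi}$, the structure isomorphism $\R\fst\R v_*\iso\R u_*\R g_*$, and the counit isomorphisms for $u$ and $v$, becomes the unit $\id\to g^!\R g_*$ --- which is essentially the outer border of the lemma itself, i.e.\ the statement being proved; appealing to it as ``built into Nayak's construction'' is therefore circular as written. What the construction does provide is the \emph{characterization} of $\beta_{\>\Xi}$ through the associated diagram of \S\ref{^!}: the adjoint description (iv) of $\beta_{\>\Xi'}$ for the fiber square $\Xi'$ (where the genuine proper adjunction, for the base-changed proper map, enters), condition (vi) for the formally \'etale factor $i$, and transitivity. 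Region \circled3 must be verified by unwinding that decomposition, together with flat base change for $\R(-)_*$ and the triangle identities --- the chase the paper indicates by its reference to \cite[Exercise 3.10.4(b)]{li} and leaves to the reader. Your proposal stops one step short of this.
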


\begin{proof} Commutativity of subdiagram \circled1 is clear.

For commutativity of subdiagram \circled2, drop  $v^*$ and note the obvious commutativity of  the following adjoint of the resulting diagram:
\[
\def\1{$\fst v_*$}
\def\2{$\fst v_*g^!\<g_*$}
\def\3{$u_*g_*g^!g_*$}
\def\4{$u_*g_*$}
 \bpic[xscale=5, yscale=1.25]

   \node(11) at (1,-1){\1};
   \node(12) at (2,-1){\2};   
 
   \node(22) at (2,-2){\3};   
    
   \node(31) at (1,-3){\1};
   \node(32) at (2,-3){\4};
   
   \draw[->] (11)--(12) ;
   \draw[->] (31)--(32) ;

   \draw[double distance=2pt] (11)--(31) ;
  
   \draw[->] (12)--(22) node[right=1pt, midway, scale=.75]{$\simeq$}; 
   \draw[->] (22)--(32) ;    
   
  \epic
\]

Showing commutativity of subdiagram \circled3 is similar to working out
\cite[Exercise 3.10.4(b)]{li}. (Details are left to the reader.)

Commutativity of subdiagram \circled4 is given by (vi) in section~\ref{^!}.
\end{proof}

\begin{proof}[Proof of Proposition~\ref{fake unit}]
As before, for any map~$h$ in $\sE$ we abbreviate $\R h_*$ to  $h_*$. Let $f$ be a map in $\sE$, and $f=pu$  a compactification. If $\eta$ exists, then  $\eta(f)\colon \id \to f^!\<f^{}_{\<\<*}$ must be given by the natural composition
\begin{equation}
\label{pseudounit}
\id \iso u^* u_* \xto{\via\textup{ unit}\,} u^*p^!\>p_* u_* \iso f^!\<f^{}_{\<\<*},
\end{equation} 
so that uniqueness holds.  

Let us show now that this composite map does not depend on the choice of compactification.

A morphism $r\colon(f=qv)\to(f=pu)$ from one compactification of $f$ to another is a commutative diagram of scheme\kf-maps
\begin{equation}\label{dominate}
\CD
 \bpic[xscale=2.5, yscale=1]

   \node(12) at (2,-1){$\bullet$};
      
   \node(21) at (1,-2){$\bullet$}; 
   \node(23) at (3,-2){$\bullet$};   
    
   \node(32) at (2,-3){$\bullet$};

   \draw[->] (21)--(12) node[above=1pt, midway, scale=.75]{$v$}; 
   \draw[->] (12)--(23) node[above=1pt, midway, scale=.75]{$q$}; 

   \draw[->] (21)--(32) node[below=1pt, midway, scale=.75]{$u$}; 
   \draw[->] (32)--(23) node[below=1pt, midway, scale=.75]{$p$}; 

   \draw[->] (12)--(32) node[left=1pt, midway, scale=.75]{$r$};

  \epic
\endCD
\end{equation}
If such a map $r$---necessarily proper---exists, we say that the compactification $f=qv$ \emph{dominates} $f=pu$.

\emph{Any two compactifications $X\xto{u_1\,} Z_1\xto{p^{}_1\,} Y,$ $X\xto{u_2\,} Z_2\xto{p^{}_2\,} Y$ of a given $f\colon X\to Y$ are dominated by a third one}.  Indeed, let $v\colon X\to Z_1\times_Y Z_2$ be the map corresponding to the pair $(u_1,u_2)$, let $Z\subseteq  Z_1\times_Y Z_2$ be the schematic closure 
of $v$---so that $v\colon X\to Z$ has schematically dense image\kf---and let $r_i:Z\to Z_i\  (i=1,2)$ be the maps induced by the two canonical projections. Since $u=r_iv$ is a localizing immersion, therefore, by \cite[p.\,533, 3.2]{Nk}, so is~$v$. Thus  $f=(p_ir_i)v$ is a compactification,
not depending on $i$, mapped by $r_i$ to the compactification $f=p_iu_i$.

So to show that \eqref{pseudounit} gives the same result for any two compactifications of~$f$, it suffices to do so
when one of the compactifications dominates the~other.
Thus with reference to the diagram \eqref{dominate}, and keeping in mind that $u^*=u^!$ and $v^*=v^!$, one need only show that the following natural
diagram commutes.
\[
\def\1{$\id$}
\def\2{$u^!u_*$}
\def\3{$v^!r^!r_*v_*$}
\def\4{$v^!v_*$}
\def\5{$u^!p^!p_*u_*$}
\def\6{$v^!r^!p^!p_*r_*v_*$}
\def\7{$v^!q^!q_*v_*$}
\def\8{$f^!\<\<\fst$}
 \bpic[xscale=3, yscale=1.6]

   \node(12) at (2,-1){\1};   
  
   \node(21) at (1,-2){\2};
   \node(22) at (2,-2){\3};
   \node(23) at (3,-2){\4};
   
   \node(31) at (1,-3){\5};
   \node(32) at (2,-3){\6};
   \node(33) at (3,-3){\7};
   
   \node(42) at (2,-4){\8};

   \draw[->] (21)--(22) ;
   \draw[->] (23)--(22) ;  
   
    \draw[->] (31)--(32) ;
   \draw[->] (33)--(32) ; 
   
   \draw[->] (21)--(31) ;
   
   \draw[->] (22)--(32) ;
   \draw[->] (32)--(42) ;
 
   \draw[->] (23)--(33) ;

   \draw[->] (12)--(21) node[right=1pt, midway, scale=.75]{$ $}; 
   \draw[->] (12)--(23) ;    
 
    \draw[->] (31)--(42) node[right=1pt, midway, scale=.75]{$ $}; 
    \draw[->] (33)--(42) ;    
    
   \node at (2,-1.55)[scale=.9]{\circled1};
   \node at (1.5,-2.5)[scale=.9]{\circled2};
   \node at (2.5,-2.5)[scale=.9]{\circled3};   
   \node at (1.75,-3.4)[scale=.9]{\circled4}; 
   \node at (2.25,-3.4)[scale=.9]{\circled5};      
     
  \epic
\]

Commutativity of subdiagram \circled 1 is given by Lemma~\ref{compat}, with $f\set r$ and $g\set\id_\sX$.

Commutativity of  \circled2 is clear.

Commutativity of \circled3 holds because over proper maps, $(-)^!$ and $(-)_*$ are \emph{pseudofunctorially} adjoint (see \cite[Corollary (4.1.2)]{li}.

Commutativity of \circled4 and \circled5 results from the pseudofunc\-toriality of $(-)^!$ and $(-)_*$.\va1

Thus \eqref{pseudounit} is indeed independent of choice of compactification, so that $\eta(f)$ is well-defined.\va4

Finally, it must be shown that $\eta$ is pseudofunctorial,\va{-1} i.e., for any composition
$X\xto{\,f\,}Y\xto{\,g\,}Z$ in $\sE$, the next diagram commutes:
$$
\CD
\id @>\eta(gf)>>(gf)^!(gf)_* \\
@V\eta(f)VV@VV\simeq V \\
 f^!\<\<\fst @>>f^!\eta(g)> f^!g^!g_*\fst
\endCD
$$

Consider therefore a diagram
\[
 \bpic[xscale=1.5, yscale=1.36]

   \node(31) at (1,-3){$\bullet$};   
  
   \node(21) at (1,-2){$\bullet$};
   \node(22) at (2,-2){$\bullet$};

   \node(11) at (1,-1){$\bullet$};
   \node(12) at (2,-1){$\bullet$};
   \node(13) at (3,-1){$\bullet$};

   \draw[->] (11)--(12) node[above=1pt, midway, scale=.75]{$u$};
   \draw[->] (12)--(13) node[above=1pt, midway, scale=.75]{$w$};  

   \draw[->] (21)--(22) node[above, midway, scale=.75]{$v$};
       
   \draw[->] (11)--(21) node[left=1pt, midway, scale=.75]{$f$};
   \draw[->] (21)--(31) node[left=1pt, midway, scale=.75]{$g$};
  
   \draw[->] (12)--(21) node[right=5pt, midway, scale=.75]{$p$}; 
   \draw[->] (13)--(22) node[right=5pt, midway, scale=.75]{$r$};    
   \draw[->] (22)--(31) node[right=5pt, midway, scale=.75]{$q$}; 
 
  \epic
\]
where $pu$ is a compactification of $f$, $qv$ of $g$, and $rw$ of $vp$---so that $(qr)(wu)$ is a compactification of $gf$. The problem then is to
show commutativity of (the border of) the following natural diagram.
\[\mkern-1.5mu
\def\1{$\id$}
\def\2{$(wu)^*(wu)_*$}
\def\3{$(wu)^*(qr)^!(qr)_*(wu)_*$}
\def\4{$(gf)^!(gf)_*$}
\def\5{$u^*u_*$}
\def\6{$u^*\<p^!p_*u_*$}
\def\7{$f^!\<\<\fst$}
\def\8{$f^!v^*v_*\fst$}
\def\9{$f^!v^*q^!q_*v_*\fst$}
\def\ten{$f^!g^!g_*\fst$}
\def\lvn{$u^*w^*w_*u_*$}
\def\twv{$u^*w^*r^!r_*w_*u_*$}
\def\thn{$u^*(rw)^!(rw)_*u_*$}
\def\frn{$u^*p^!v^*v_*p_*u_*$}
\def\ffn{$u^*w^*r^!q^!q_*r_*w_*u_*$}
\def\sxn{$u^*p^!g^!g_*p_*u_*$}
\def\svn{$u^*(vp)^!(vp)_*u_*$}
\def\egn{$u^*p^!v^*\<q^!q_*v_*p_*u_*$}
 \bpic[xscale=3.75, yscale=1.5]

   \node(11) at (1,-1){\1};   
   \node(12) at (2.29,-1){\2};
   \node(13) at (3.23,-1){\3};

   \node(21) at (1,-2){\5};
   \node(22) at (2.29,-2){\lvn};
   \node(24) at (3.9,-2){\4}; 
    
   \node(31) at (1.4,-3){\thn};
   \node(32) at (2.29,-3){\twv};
   \node(33) at (3.23,-3){\ffn};
     
   \node(41) at (1.4,-4){\svn};
   \node(42) at (2.29,-4){\frn};   
   \node(43) at (3.23,-4){\egn};   
     
   \node(51) at (1,-5){\6};
   \node(52) at (2.29,-5){\6};
   \node(53) at (3.23,-5){\sxn};
   \node(54) at (3.9,-5){\ten};     

   \node(62) at (2.29,-6){\7};
   \node(63) at (3.17,-6){\8};
   \node(64) at (3.85,-6){\9};

	  
	   \draw[->] (11)--(12) ;
	   \draw[->] (12)--(13) ;  
                  
	   \draw[->] (21)--(22) ; 

	   \draw[<-] (31)--(32) ;
	   \draw[->] (32)--(33) ; 
	   
	   \draw[->] (41)--(42) ;
	   \draw[->] (42)--(43) ;
	 
	   \draw[double distance=2pt] (51)--(52) ;	   
	   \draw[->] (52)--(53) node[below=1pt, midway, scale=.75]{$\via\eta(g)$}; 
	   \draw[->] (53)--(54) ;

	   \draw[->] (62)--(63) ;
	   \draw[->] (63)--(64) ;  	   
	 
	   \draw[->] (11)--(21) ;
	   \draw[->] (21)--(51) ;
	   
	   \draw[double distance=2pt] (31)--(41) ;
	   
	   \draw[->] (12)--(22) ;
	   \draw[->] (22)--(32) ;
	   \draw[->] (32)--(42) ;
           \draw[->] (42)--(52) ;
	   \draw[->] (52)--(62) ;	
	    
	   \draw[->] (13)--(33) ;
	   \draw[->] (33)--(43) ;
 	   \draw[->] (43)--(53) ;
 
           \draw[->] (24)--(54) ;
	   \draw[->] (3.9,-5.78)--(3.9, -5.22) ;
  
	   \draw[->] (13)--(24) ;
	   \draw[->] (21)--(31) node[above=-3pt, midway, scale=.75]{$\mkern45mu\eta(vp)$};

	   \node at (1.65,-1.55)[scale=.9]{\circled1};
	   \node at (1.85,-2.55)[scale=.9]{\circled2};
	   \node at (1.85,-3.55)[scale=.9]{\circled3};   
	   \node at (1.65,-4.55)[scale=.9]{\circled4}; 
	   \node at (2.76,-2)[scale=.9]{\circled5};      
           \node at (2.76,-3.55)[scale=.9]{\circled6};
	   \node at (3.565,-3.55)[scale=.9]{\circled7};	  
	   \node at (2.76,-4.55)[scale=.9]{\circled8}; 
	   \node at (3.23,-5.55)[scale=.9]{\circled9};     
  \epic
\]

That subdiagram \circled1 commutes is shown, e.g., in \cite[\S3.6, up to (3.6.5)]{li}. (In other words, the adjunction between $(-)^*$ and $(-)_*$ is pseudofunctorial, see \emph{ibid}., (3.6.7)(d).)

Commutativity of \circled2 is  the definition of $\eta(vp)$ via the compactification~$rw$.

Commutativity of \circled3  holds by definition of the vertical arrow on its right.

Commutativity of \circled4 (omitting $u^*$ and $u_*$) is the case $(f,g,u,v)\set(r,p,v,w)$ of Lemma~\ref{compat}. 

Commutativity of \circled5 holds because of pseudofunctoriality of the adjunction between $(-)_*$ and $(-)^!$ over \emph{proper} maps (see \S\ref{^times}).

Commutativity of \circled6 is clear. 

Commutativity of \circled7 results from  pseudofunctoriality of $(-)^!$ and~$(-)_*$. 

Commutativity of \circled8 is the definition of $\eta(g)$ via the compactification~$qv$.

Commutativity of \circled9 is simple to verify.

This completes the proof of Proposition~\ref{fake unit}. 
\end{proof}

\begin{subthm}
\label{relation}
 There is a unique pseudofunctorial map $\psi\colon(-)^\times\to(-)^!$ whose restriction over the subcategory of proper maps in $\sE$ is the identity, and such that for every localizing immersion $u,$ $\psi(u)\colon u^\times\to u^!$ is the natural composition
\[
u^\times\iso u^*\R u_* u^\times\lto u^*=u^!.
\]
\end{subthm}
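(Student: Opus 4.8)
The idea is to build $\psi$ out of the fake unit $\eta$ of Proposition~\ref{fake unit} by conjugation, and then transport all the needed coherences through the conjugation machinery of~\S\ref{conjugate}. Concretely, for an $\sE$-map $f\colon X\to Y$ the diagram of functors $\id\xto{\eta(f)}f^!f_*$ (with $f_*=\R f_*$) produces, upon applying $f^\times$ on the right and using the adjunction counit $f_*f^\times\to\id$, a natural map
\[
f^\times\xto{\ \eta(f)f^\times\ }f^!f_*f^\times\xto{\ f^!(\text{counit})\ }f^!,
\]
and this composite is to be $\psi(f)$. First I would record that this agrees with the prescribed formula in the two boundary cases: when $f$ is proper, $f^\times=f^!$ and $\eta(f)$ is the adjunction unit, so the composite is the standard triangular identity, hence the identity; when $f=u$ is a localizing immersion, $\eta(u)$ is by construction the inverse of $u^*u_*\iso\id$ of~\ref{cosa:localizing-immersion}, and one checks the composite unwinds to exactly $u^\times\iso u^*u_*u^\times\to u^*=u^!$ — this is essentially a diagram chase using that on localizing immersions the unit $\id\to u^\times u_*$ is the isomorphism $\eta_2$ of~\ref{cosa:localizing-immersion}.

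**Pseudofunctoriality.** The heart of the proof is showing $\psi$ is pseudofunctorial: for $X\xto{f}Y\xto{g}Z$ the square comparing $\psi(gf)$ with $g^!\psi(f)\smallcirc\psi(g)f^\times$ (modulo the pseudofunctorial isomorphisms $(gf)^\times\simeq f^\times g^\times$, $(gf)^!\simeq f^!g^!$, $(gf)_*\simeq g_*f_*$) must commute. I would reduce this to two inputs: (a) the pseudofunctoriality of $\eta$, already proved in Proposition~\ref{fake unit}; and (b) the pseudofunctoriality of the adjunction $(\R(-)_*,(-)^\times)$, i.e.\ the compatibility of the counits $\epsilon_f$, $\epsilon_g$, $\epsilon_{gf}$ with the composition isomorphisms — this is \cite[Corollary (4.1.2)]{li}. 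Splicing $\eta(gf)=$ (composite built from $\eta(f)$, $\eta(g)$) into the definition of $\psi(gf)$ and then inserting the counit for $gf$, one rewrites the big diagram as a juxtaposition of: the naturality squares of $\eta(g)$ applied to the map $f_*f^\times\to\id$; the pseudofunctoriality square for $\eta$; and the counit-compatibility square. Each piece commutes, so the border does.

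**Uniqueness.** Uniqueness is the easy end: any pseudofunctorial $\psi'\colon(-)^\times\to(-)^!$ restricting to the identity on proper maps and to the stated formula on localizing immersions is forced, because every $f\in\sE$ factors as $f=pu$ with $p$ proper and $u$ a localizing immersion (Nayak's compactification, \ref{locimm}), and pseudofunctoriality plus the two boundary prescriptions then pin down $\psi'(f)$ in terms of $\psi'(p)=\id$ and $\psi'(u)$; one must note, exactly as in the proof of Proposition~\ref{fake unit}, that the resulting expression is independent of the chosen compactification — but this independence is inherited from that already established for $\eta$, so no new work is needed.

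**Main obstacle.** The genuinely laborious step is part (a)/(b) bookkeeping in pseudofunctoriality: assembling the large hexagonal/octagonal diagram of functors and checking that every interior cell is either a naturality square, an instance of Proposition~\ref{fake unit}, or \cite[Corollary (4.1.2)]{li}. As in the proof of Proposition~\ref{fake unit} itself, the combinatorics of the diagram — keeping track of where the counits $\epsilon_f,\epsilon_g,\epsilon_{gf}$ and the comparison isomorphisms are inserted — is the main thing to get right; there is no conceptual difficulty, only the risk of a misrouted arrow.
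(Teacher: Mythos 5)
Your proposal is correct and is essentially the paper's own argument: uniqueness is forced by factoring $f=pu$ through a compactification and using the prescribed values on $p$ and $u$, while existence is obtained by setting $\psi(f)$ equal to $f^\times\xto{\;\eta(f)f^\times}f^!\R\fst f^\times\to f^!$, with pseudofunctoriality inherited from that of $\eta$ (Proposition~\ref{fake unit}) and of the counit of $\R(-)_*\dashv(-)^\times$. The only cosmetic difference is that you spell out the diagram bookkeeping the paper leaves implicit (and your aside that uniqueness needs independence of the compactification is superfluous, since for uniqueness one fixed compactification already pins down $\psi(f)$).
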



\begin{proof} Let $f$ be an $\sE$-map, and $f=pu$ a compactification. If $\psi$ exists, then $\psi(f)\colon f^\times\to f^!$ must be given by the natural composition
\begin{equation*}\label{psidef}
f^\times \iso u^\times p^\times = u^\times p^! \to u^!p^!\iso f^!,\\[-1pt]
\tag{\ref{relation}.1}
\end{equation*}
so that uniqueness holds. 

As for existence, using \ref{fake unit} we can take $\psi(f)$ to be the natural composition
\[
f^\times\xto{\<\<\via\> \eta\,} f^!\Rf f^\times\lto f^!.
\]
This is as required when $f$ is proper or a localizing immersion, and it behaves pseudo\-functorially, because both $\eta$ and the counit map $\Rf f^\times\to \id$ do.
\end{proof}

\begin{subrem}\label{eta from psi} Conversely, one can recover $\eta$ from $\psi$: 
it is simple to show~that for any $\sE$-map $f\colon X\to Y$ and
$E\in\Dqc(X)$, and with $\eta^{}_{\>2}\colon \id_X\to f^\times\R\fst$ the unit map of the adjunction $f^\times\! \dashv \R\fst$, one has
\begin{equation*}\label{eta from psi.1}
\eta(E) = \psi(f)(\R\fst E)\smallcirc\eta^{}_{\>2}(E).
\tag{\ref{eta from psi}.1}
\end{equation*}
(\emph{Notation}: $\mathsf F\dashv\mathsf G$ signifies that the functor $\mathsf F$ is left-adjoint to the functor~$\mathsf G$.)
\end{subrem}

\begin{subrem}
If $u\colon X\to Y$ is a localizing immersion, then the  map
\[
u^\times\R u_*\xto{\!\psi(u)\>} u^*\R u_*\iso \id.
\] 
is an \emph{isomorphism,} inverse to the isomorphism $\eta^{}_{\>2}$  in 
\S\ref{cosa:localizing-immersion}. (A proof is left to the reader.)
\end{subrem}

\begin{subrem}\label{L1.1.25}
\noindent   \emph{The map\/ \va{-.6} $\R\>u_*\psi(u)\colon\R\>u_*u^\times\to\R\>u_*u^*$
is equal to the composite} 
\[
\R\>u_*u^\times\xto{\epsilon_2^{}\>\>}\id\xto{\eta_1^{}\>}\R\>u_*u^*,
\]
 where $\epsilon_2^{}$ is the counit of the adjunction $\R\>u_*\<\dashv u^\times$ and $\eta_1^{}$ is the unit of the adjunction $u^*\<\dashv\R\>u_*$.

Indeed, by \S\ref{cosa:localizing-immersion} the counit $\epsilon^{}_1$ of the adjunction $u^*\<\dashv\R\>u_*$ is an isomorphism;   and since the composite
\[
\R\>u_*\xto{\<\eta_1^{}\R\>u_*\>}\R\>u_*u^*\R\>u_*\xto{\<\R\>u_*\epsilon_1^{}\>\>}\R\>u_*
\]
is the identity map, therefore  $\R\>u_*\epsilon_1^{-\<1}=\eta_1^{}\R\>u_*$, as both are the (unique) inverse of~$\R\>u_*\epsilon_1^{}$; so the next diagram commutes, giving the assertion:
\[
\CD
\R\>u_*u^\times @>\R\>u_*\epsilon_1^{-\<1}u^\times=\:\eta_1^{}\R\>u_*u^\times\,>> \R\>u_*u^*\R\>u_*u^\times \\
@V{\epsilon_2^{}}VV @VV{ \R\>u_*u^*\epsilon_2^{}}V \\
\id @>>{\qquad\quad\eta_1^{}\qquad\qquad}> \R\>u_*u^*
\endCD
\]
\vskip2pt

Also, using the  isomorphism $\epsilon^{}_1\colon u^*\R u_*\iso \id$ 
(resp.,  its right conjugate~$\eta^{}_{\>2}\colon \id\iso u^\times \R u_*$), one can
recover~$\psi(u)$ from~$\R u_*\psi(u)$ by applying the functor $u^*$ 
(resp.~$u^\times$), thereby obtaining \mbox{alternate} definitions of $\psi(u)$.
\end{subrem}

\smallskip
The next Proposition asserts compatibility of $\psi$ with the flat base-change maps for $(-)^!$ (see \eqref{bch}) and for $(-)^\times\<.$

\begin{prop}\label{base change} 
Let\/ $f\colon X\to Z$ and\/ $g\colon Y\to Z$ be maps in\/ $\sE,$ with $g$ flat. 
Let\/ $p\colon X\times_Z Y\to X$ and $q\colon X\times_Z Y\to Y$ be the projections. Let $\beta\colon p^*\<\<f^\times\to q^\times\< g^*$ be the map adjoint to the natural composite map\/ 
\[\R q_*\>p^*\<\<f^\times\iso g^*\R\fst f^\times\to g^*\<.
\]
Then the following diagram commutes.
\[
\CD
p^*\<\<f^\times @>p^*\psi(f)>> p^*\<\<f^!\\
@V\beta VV @VV\eqref{bch}V \\
q^\times\< g^* @>>\psi(q)> q^!g^*
\endCD
\]
\end{prop}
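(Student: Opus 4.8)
The plan is to reduce the assertion to two cases --- $f$ proper and $f$ a localizing immersion --- and then to handle each directly. For the reduction, fix a compactification $f=\bar p\,u$ with $u\colon X\to W$ a localizing immersion and $\bar p\colon W\to Z$ proper, and complete the given square to a diagram of fiber squares
\[
\CD
X\times_Z Y @>\bar u>> W\times_Z Y @>\bar q>> Y\\
@VpVV @V{p'}VV @VVgV\\
X @>u>> W @>\bar p>> Z
\endCD
\]
in which $q=\bar q\,\bar u$, the map $p'$ is flat, $\bar q$ is proper, and $\bar u$ is again a localizing immersion (localizing immersions, being flat monomorphisms, are stable under base change). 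Call the left-hand square the $u$-square and the right-hand square the $\bar p$-square. By the pseudofunctoriality of $\psi$ (Theorem~\ref{relation}), $\psi(f)$ decomposes, through the canonical isomorphisms $f^\times\cong u^\times\bar p^\times$ and $f^!\cong u^!\bar p^!$, into the pair $\psi(u)$, $\psi(\bar p)$, so that $p^*\psi(f)$ is a two-step composite; by the pseudofunctoriality of the adjunction $\R(-)_*\dashv(-)^\times$ (\cite[Corollary (4.1.2)]{li}), the left edge $\beta$ decomposes into the base-change maps of the $u$-square and of the $\bar p$-square; and, by the transitivity of \eqref{bch} under horizontal pasting (\S\ref{^!}), so does the right edge \eqref{bch}.

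Assembling these factorizations exhibits the square in the statement as the outer rectangle of a $3\times3$ array of squares. Two of the four small squares commute by the naturality of a base-change map --- the base-change map of the $u$-square, applied to $\psi(\bar p)$, and the map $\psi(\bar u)$, applied to the map \eqref{bch} of the $\bar p$-square. One small square is the Proposition for the proper map $\bar p$ and the flat map $g$; one is the Proposition for the localizing immersion $u$ and the flat map $p'$. So it suffices to treat those two cases.

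Suppose first that $f$ is proper; then so is $q$, and under the identification of $(-)^\times$ with $(-)^!$ over proper maps (\S\ref{^times}) both $\psi(f)$ and $\psi(q)$ are identities, so the claim reduces to the equality of $\beta$ with \eqref{bch}. But each of these maps is, by construction --- for \eqref{bch} this is property~(iv) of \S\ref{^!} --- the map adjoint under $\R q_*\dashv q^\times$ to the single composite $\R q_*\,p^*f^\times\iso g^*\R f_*\,f^\times\to g^*$, the last arrow being the counit; hence they coincide. Suppose instead that $f$ is a localizing immersion; then $q$ is one too, $f^!=f^*$, $q^!=q^*$, and by property~(v) of \S\ref{^!} the map \eqref{bch}$\colon p^*f^*\iso q^*g^*$ is the canonical isomorphism $\theta$ attached to $fp=gq$. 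By Theorem~\ref{relation}, $\psi(f)$ is the composite $f^\times\xto{\epsilon^{-1}_1 f^\times}f^*\R f_*f^\times\xto{f^*\epsilon^{}_2}f^*$ (notation as in \S\ref{cosa:localizing-immersion} and Remark~\ref{L1.1.25}), and likewise for $\psi(q)$. Since the counit $\epsilon^{}_1\colon q^*\R q_*\iso\id$ is invertible, a morphism with target of the form $q^*(-)$ is detected by its image under $\R q_*$; hence it is enough to verify $\theta\circ p^*\psi(f)=\psi(q)\circ\beta$ after applying $\R q_*$. Expanding both sides by Remark~\ref{L1.1.25} (which identifies $\R f_*\psi(f)$ with $\eta^{}_1\circ\epsilon^{}_2$, and $\R q_*\psi(q)$ with $\eta^{}_1\circ\epsilon^{}_2$), by the adjoint description of $\beta$, and by the naturality of the flat base-change isomorphism $\R q_*p^*\iso g^*\R f_*$, and then cancelling the common right-hand factor $\R q_*p^*f^\times\iso g^*\R f_*f^\times\xto{g^*\epsilon^{}_2}g^*$, one is left with exactly the standard coherence relating the flat base-change isomorphism, the units of the adjunctions $(-)^*\dashv\R(-)_*$, and the canonical isomorphism $\theta$ --- equivalently, the compatibility of flat base change with the pseudofunctoriality of $(-)^*$ (cf.\ subdiagram~\circled1 in the proof of Proposition~\ref{fake unit}, and \cite[\S\S3.6--3.7]{li}).

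I expect the bookkeeping in the reduction, and the proper case, to be routine. The one genuinely substantive point will be the last step of the localizing-immersion case: reconciling the explicit formula for $\psi$ on a localizing immersion with the adjoint definition of the base-change map $\beta$ for $(-)^\times$ --- that is, confirming that flat base change for the pair $(-)^*$, $\R(-)_*$ interacts with the units $\eta^{}_1$ and the canonical isomorphism $\theta$ in the expected way.
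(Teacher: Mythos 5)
Your proposal is correct and takes essentially the same route as the paper's proof: compactify $f$, invoke pseudofunctoriality of $\psi$ together with transitivity of the two base-change maps to split the square into a proper piece (immediate from the adjoint definitions and property (iv)) and a localizing-immersion piece, and settle the latter using the explicit form of $\psi$ on localizing immersions plus the standard adjoint descriptions of the base-change maps (the content of \cite[3.7.2(c) and \S\S 3.6--3.7]{li}, which is exactly what the paper's expanded subdiagrams rely on). Your device of checking the localizing-immersion case after applying $\R q_*$, legitimate since $\R q_*$ is fully faithful for a localizing immersion $q$, is only a mild repackaging of the paper's direct expansion of that subdiagram by units and counits.
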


\begin{proof} 
Let $f=\bar f u$ be a compactification, so that there is a composite cartesian diagram (with $h$ flat 
and with $\bar q v$ a compactification of $q$):
\[
\CD
X\times_ZY @>p>> X\\
@VvVV @VVuV\\
W\times_ZY @>h>> W\\
@V\bar q VV @VV\bar f V\\
Y @>>g> Z
\endCD
\]
In view of the pseudofunctoriality of $\psi$, what needs to be shown is commutativity of the following natural diagram.
\[
\def\1{$p^*u^{\<\times}\!\bar f^\times$}
\def\2{$p^*u^*\<\<\bar f^{\>\>!}$}
\def\3{$p^*\<\<f^!$}
\def\4{$v^\times\< h^*\<\<\bar f^{\times}$}
\def\5{$v^* \<h^*\<\<\bar f^{\>\>!}$}
\def\6{$v^\times \<\bar q^{\times}\<\<g^*$}
\def\7{$v^* \bar q^{\>\>!}\<\<g^*$}
\def\8{$q^!\<g^*$}
\def\9{$p^*\<\<f^{\<\times}$}
\def\0{$q^\times\< g^*$}
 \bpic[xscale=3, yscale=1.5]

   \node(10) at (0,-1){\9} ; 
   \node(11) at (1,-1){\1};   
   \node(12) at (2,-1){\2};
   \node(13) at (3,-1){\3};

   \node(21) at (1,-2){\4};
   \node(22) at (2,-2){\5};

   \node(30) at (0,-3){\0} ;    
   \node(31) at (1,-3){\6};
   \node(32) at (2,-3){\7};
   \node(33) at (3,-3){\8};

	   \draw[->] (10)--(11) node[above=1pt, midway, scale=.75]{$\Iso$};
	   \draw[->] (11)--(12) ;
	   \draw[->] (12)--(13) node[above=1pt, midway, scale=.75]{$\Iso$};  
                  
	   \draw[->] (21)--(22) ; 
  
           \draw[->] (30)--(31) node[above=1pt, midway, scale=.75]{$\Iso$};
	   \draw[->] (31)--(32) ;
	   \draw[->] (32)--(33) node[above=1pt, midway, scale=.75]{$\Iso$};

 	   \draw[->] (10)--(30) ;

	   \draw[->] (11)--(21) ;
	   \draw[->] (21)--(31) ;
	   	   
	   \draw[->] (12)--(22) ;
	   \draw[->] (22)--(32) ;
	 	    
	   \draw[->] (13)--(33) ;

	   \node at (1.5,-1.51)[scale=.9]{\circled1};
	   \node at (1.5,-2.51)[scale=.9]{\circled2};
  \epic
\]

Commutativity of each of the unlabeled subdiagrams is an instance of transitivity of the 
appropriate base\kf-change map (see e.g., \cite[Thm.\,(4.8.3)]{li}).

Commutativity of \circled2 is straightforward to verify.

Subdiagram \circled1, without $\bar f^{\>\sss\bullet}$, expands naturally as follows (where
we have written $u_*$ (resp.~$v_*$) for $\R u_*$ (resp.~$\R v_*$)):
\[
\def\1{$p^*\<u^{\<\times}$}
\def\2{$p^*\<u^*u_*u^{\<\times}$}
\def\3{$p^*u^*$}
\def\4{$v^*v_* \>p^*\<u^\times$}
\def\5{$v^*\<h^*\<u_*u^\times$}
\def\6{$v^\times\< h^*$}
\def\7{$v^*v_*v^\times \<h^*$}
\def\8{$v^*\<h^*$}
 \bpic[xscale=3.3, yscale=1.5]

   \node(10) at (0,-1){\1} ; 
   \node(12) at (2,-1){\2};
   \node(13) at (3,-1){\3};
  
   \node(21) at (1,-2){\4};
   \node(22) at (2,-2){\5};

   \node(30) at (0,-3){\6};
   \node(32) at (2,-3){\7};
   \node(33) at (3,-3){\8};

	   \draw[<-] (10)--(12) node[above=1pt, midway, scale=.75]{$$};
	   \draw[->] (12)--(13) node[above=1pt, midway, scale=.75]{$$};  
                  
	   \draw[<-] (21)--(22) ; 
  
           \draw[<-] (30)--(32) node[above=1pt, midway, scale=.75]{$$};
	   \draw[->] (32)--(33) node[above=1pt, midway, scale=.75]{$$}; 
	   
 	   \draw[->] (10)--(30) ;

	   \draw[->] (12)--(22) ;
	   \draw[->] (13)--(33) ;

	   \draw[<-] (10)--(21) ;
	   \draw[->] (21)--(32) ;
	 	    
	   \draw[->] (22)--(33) ;

	   \node at (1.3,-1.51)[scale=.9]{\circled3};
	   \node at (2,-2.51)[scale=.9]{\circled4};
  \epic
\]

Here the unlabeled diagrams clearly commute.

Commutativity of \circled3 results from the fact that the natural isomorphism $h^*u_*\to v_*p^*$ is adjoint to
the natural composition \mbox{$v^*h^*u_*\iso p^*u^*u_*\to p^*$} (see \cite[3.7.2(c)]{li}).

Commutativity of \circled4 results from the fact that the base\kf-change map 
$p^*u^\times\to v^\times h^*$ is adjoint to  $v_*p^*\iso h^*u_*u^\times\to h^*\<$.

Thus \circled1 commutes; and Proposition~\ref{base change} is proved. 
\end{proof}

\begin{cosa}
\label{proper-support}
Next we treat the interaction of the map $\psi$ with standard derived functors. Our approach involves the notion of \emph{support,} reviewed in Appendix~\ref{Support}. 

\begin{sublem}
\label{L1.1}
Let\/  $u\colon X\to Z$ be a localizing immersion, 
\mbox{$\epsilon_2^{}\colon \R\>u_*u^\times\to\id$}  the counit of the adjunction\/ 
$\R\>u_*\<\dashv u^\times,$ and\/ $\eta^{}_1\colon \id\to\R\>u_*u^*$ the unit\va{.6} of the adjunction\/ $u^*\dashv\R\>u_*$.  
For all $E\in\Dqc(X)$  and\/ $F\in\Dqc(Z),$ the maps\/ $\R u_*E\Otimes{\<Z}\eta^{}_1( F)$ and\/~$\RHqc{Z}\big(\R u_* E,\epsilon^{}_2( F)\big)$ are isomorphisms.
\end{sublem}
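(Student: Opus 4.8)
The plan is to deduce each of the two assertions from a commutative square built out of a standard functorial isomorphism, thereby reducing it to a triangle-identity computation that invokes the isomorphisms $\epsilon^{}_1$ and $\eta^{}_{\>2}$ recorded in \S\ref{cosa:localizing-immersion}.

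For the first map, I would start from the projection isomorphism of \S\ref{^!} for~$u$, written as a natural isomorphism $\R u_* E\Otimes{Z}(-)\iso\R u_*\big(E\Otimes{X}\LL u^*(-)\big)$ of functors on $\Dqc(Z)$. Its functoriality in the second argument gives a commutative square whose two vertical arrows are instances of it, whose top edge is $\R u_* E\Otimes{Z}\eta^{}_1(F)$, and whose bottom edge is $\R u_*\big(E\Otimes{X}\LL u^*\eta^{}_1(F)\big)$. Since the localizing immersion $u$ is flat, $\LL u^*=u^*$; and the triangle identity for the adjunction $u^*\dashv\R u_*$ gives $\epsilon^{}_1(u^*F)\smallcirc u^*\eta^{}_1(F)=\id$. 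As $\epsilon^{}_1$ is an isomorphism (\S\ref{cosa:localizing-immersion}), so is $u^*\eta^{}_1(F)$; hence the bottom edge of the square is an isomorphism, and therefore so is the top edge $\R u_* E\Otimes{Z}\eta^{}_1(F)$.

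For the second map, I would use instead the sheafified-duality isomorphism \eqref{sheaf dual} for~$u$, namely the natural isomorphism $\R u_*\RHqc{X}\big(E,u^\times(-)\big)\iso\RHqc{Z}\big(\R u_* E,(-)\big)$. Its functoriality in the second variable gives a commutative square with these isomorphisms as verticals, top edge $\RHqc{Z}\big(\R u_* E,\epsilon^{}_2(F)\big)$, and bottom edge $\R u_*\RHqc{X}\big(E,u^\times\epsilon^{}_2(F)\big)$. The triangle identity for the adjunction $\R u_*\dashv u^\times$ gives $u^\times\epsilon^{}_2(F)\smallcirc\eta^{}_{\>2}(u^\times F)=\id$, and $\eta^{}_{\>2}$ is an isomorphism by \S\ref{cosa:localizing-immersion}, so $u^\times\epsilon^{}_2(F)$ is an isomorphism; hence the bottom edge is an isomorphism, and therefore so is the top edge $\RHqc{Z}\big(\R u_* E,\epsilon^{}_2(F)\big)$. (Equivalently, one may complete $\eta^{}_1(F)$, resp.\ $\epsilon^{}_2(F)$, to a triangle; the computation above shows that $\LL u^*$, resp.\ $u^\times$, annihilates the third vertex, so the projection isomorphism, resp.\ \eqref{sheaf dual}, forces the image of that vertex under $\R u_* E\Otimes{Z}(-)$, resp.\ $\RHqc{Z}(\R u_* E,-)$, to vanish.)

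This argument is almost entirely formal, so I do not expect a genuine obstacle; the one point requiring care is the identification of the two edges of each square, that is, checking that under the projection isomorphism (resp.\ \eqref{sheaf dual}) the map $\R u_* E\Otimes{Z}\eta^{}_1(F)$ (resp.\ $\RHqc{Z}(\R u_* E,\epsilon^{}_2(F))$) corresponds to $\R u_*\big(E\Otimes{X}\LL u^*\eta^{}_1(F)\big)$ (resp.\ $\R u_*\RHqc{X}(E,u^\times\epsilon^{}_2(F))$). This is just the functoriality of the isomorphism in use, in the relevant variable.
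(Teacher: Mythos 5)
Your proof is correct and takes essentially the same route as the paper's: the projection isomorphism reduces the first assertion to $u^*\eta^{}_1$ being an isomorphism, and the duality isomorphism \eqref{sheaf dual} reduces the second to $u^\times\epsilon^{}_2$ being an isomorphism, both settled by the isomorphisms recorded in \S\ref{cosa:localizing-immersion}. The only (harmless) difference is in the final step of the second part: you exhibit $u^\times\epsilon^{}_2$ as a one-sided inverse of the isomorphism $\eta^{}_{\>2}u^\times$ via a triangle identity, whereas the paper notes it is right-conjugate to the inverse of the isomorphism $\R u_*\eta^{}_1$; both justifications are valid.
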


\begin{proof} Projection isomorphisms make the map 
$\R u_*E\Otimes{\<Z}\eta^{}_1$ isomorphic to\looseness=-1
\[
\R u_*(E\Otimes{\sX}u^*)
\xto{\!\via\>u^*\eta^{}_1}\R u_*(E\Otimes{\sX}u^*\R u_*u^*).
\]
Since $u^*\eta^{}_1$ is an isomorphism (with inverse the isomorphism
$u^*\R u_*u^*\iso u^*$ from \ref{cosa:localizing-immersion}), therefore 
so is $\R u_*E\Otimes{\<Z}\eta^{}_1$.\va1

Similarly, to show that $\RHqc{Z}\big(\R u_* E,\epsilon^{}_2\big)$ is an  isomorphism, one can use the duality isomorphism \eqref{sheaf dual} to reduce to noting that $u^\times\epsilon^{}_2$ is an isomorphism because it is 
right-conjugate to the inverse of the isomorphism 
$\R u_* \eta^{}_1:\R u_*u^*\R u_*\iso \R u_*$.
\end{proof}

\begin{subprop}
\label{Gampsi}
Let\/ $f\colon X\to Y$ be a map in\/ $\sE,$ $W$ a union of closed subsets of\/ $X\>$ to each of \kf which
the restriction of\/ $f\>$  is proper, and\/ \mbox{$E\in\Dqc(X)$} a complex with support\/ 
$\supp(E)$ contained in\/ $W$.\va{.6} Then the functors\/ $\R\vG^{}_{\!W}(-)$, $E\Otimes{\sX}(-)$ and\/  
$\RHqc{\sX}(E,-)$   take\/ $\psi(f)\colon f^\times\to f^!$ to an isomorphism.\looseness=-1
\end{subprop}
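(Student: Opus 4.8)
The plan is to reduce all three assertions to the single statement that $\R\vG^{}_{\!W}$ carries $\psi(f)$ to an isomorphism, then to reduce that to a closed subscheme on which $f$ is proper, where the normalization of $\psi$ over proper maps (Corollary~\ref{relation}) together with pseudofunctoriality forces the conclusion.

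\emph{Reduction to $\R\vG^{}_{\!W}$.} Since $\supp(E)\subseteq W$, the complex $E$ lies in the localizing tensor ideal $\Dqc_{\<W}(X)$ of Appendix~\ref{Support}. Hence $E\Otimes{\sX}A\in\Dqc_{\<W}(X)$ for every $A$, and as $\R\vG^{}_{\!W}$ is the colocalization onto $\Dqc_{\<W}(X)$ and commutes with $\Otimes{\sX}$ in one argument, the natural map $E\Otimes{\sX}\R\vG^{}_{\!W}A\to E\Otimes{\sX}A$ is an isomorphism, naturally in $A$. Likewise, from the localization triangle $\R\vG^{}_{\!W}\to\id\to L^{}_W$ (with $L^{}_W A$ right-orthogonal to $\Dqc_{\<W}(X)$) and the fact that $C\Otimes{\sX}E\in\Dqc_{\<W}(X)$ for all $C$, one gets $\RHqc{\sX}(E,L^{}_WA)=0$, so $\RHqc{\sX}(E,\R\vG^{}_{\!W}A)\to\RHqc{\sX}(E,A)$ is an isomorphism, naturally in $A$. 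Applying these to $\psi(f)$, we see that $E\Otimes{\sX}\psi(f)$ and $\RHqc{\sX}(E,\psi(f))$ are isomorphic to $E\Otimes{\sX}\R\vG^{}_{\!W}\psi(f)$ and $\RHqc{\sX}(E,\R\vG^{}_{\!W}\psi(f))$, so it suffices to prove that $\R\vG^{}_{\!W}\psi(f)$ is an isomorphism.

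\emph{Reduction to a single closed subscheme.} Writing $W$ as the filtered union of the finite unions $Z$ of the given closed subsets, one has $\R\vG^{}_{\!W}=\operatorname*{hocolim}_{Z}\R\vG^{}_{\!Z}$ (Appendix~\ref{Support}); each such $Z$ is closed, and $f|^{}_Z$ is still proper, since a finite union of closed subsets of $X$ each proper over $Y$ is proper over $Y$. A filtered homotopy colimit of isomorphisms being an isomorphism, it suffices to show: if $Z\subseteq X$ is closed with $f|^{}_Z$ proper, then $\R\vG^{}_{\!Z}$ carries $\psi(f)$ to an isomorphism.

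\emph{The closed case.} Let $i=i^{}_Z\colon Z\to X$ be the inclusion and $f^{}_Z:=f\smallcirc i$, proper by hypothesis; then $i$ is proper too, so $i^!=i^\times$, while $\psi(i)=\id$ and $\psi(f^{}_Z)=\id$ by the normalization of $\psi$. Pseudofunctoriality of $\psi$ applied to $Z\xto{\,i\,}X\xto{\,f\,}Y$ identifies the composite $i^\times\! f^\times\xrightarrow{i^\times\!\psi(f)}i^\times\! f^!\xrightarrow{\psi(i)_{f^!}}i^!f^!$, via the structural isomorphisms $(f^{}_Z)^\times\cong i^\times\! f^\times$ and $(f^{}_Z)^!\cong i^!f^!$, with $\psi(f^{}_Z)=\id$; since $\psi(i)_{f^!}=\id$, the first arrow $i^\times\!\psi(f)$ is an isomorphism. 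Hence $i^!\bigl(\operatorname{cone}\psi(f)\bigr)=i^\times\bigl(\operatorname{cone}\psi(f)\bigr)=0$, and therefore $\R\vG^{}_{\!Z}\bigl(\operatorname{cone}\psi(f)\bigr)=0$ as well: if $\CI$ cuts out $Z$, then $\R\vG^{}_{\!Z}C=\operatorname*{hocolim}_n\R\sHom_{\sX}(\OX/\CI^n,C)$ and each $\R\sHom_{\sX}(\OX/\CI^n,C)=i^{}_{n*}\,i_n^!C$ vanishes, because $i$ factors through the nilpotent thickening $i^{}_n\colon Z^{}_n\to X$ and $\iota^!$ is conservative for the nilpotent immersion $\iota\colon Z\to Z^{}_n$. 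Thus $\R\vG^{}_{\!Z}\psi(f)$ is an isomorphism, as required.

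The step I expect to be the main obstacle is the last one: checking that pseudofunctoriality of $\psi$ really does make ``$i^\times\!\psi(f)$ the identity map $\psi(f^{}_Z)$'' a literal statement once the canonical isomorphisms are inserted, and carrying out the elementary but slightly delicate passage from $i^!C=0$ to $\R\vG^{}_{\!Z}C=0$. Everything else is formal, resting on the properties of $\R\vG^{}_{\!W}$ recalled in Appendix~\ref{Support}, on the closed (tensor) structure of $\Dqc(X)$, and on the definition of $\psi$.
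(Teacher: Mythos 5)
Your proof is correct in outline, but it takes a genuinely different route from the paper's. The paper first invokes Proposition~\ref{Hom and tensor 0} to reduce to the single complex $E=\R\vG^{}_{\!W}\OX$ and to the statement that $\RHqc{\sX}(E,\psi(f))$ is an isomorphism; it then chooses a compactification $f=pu$, reduces via \eqref{psidef} to the case of a localizing immersion $u$, and concludes with Lemma~\ref{L1.1} (the maps $\eta^{}_1,\epsilon^{}_2$ for $u$) together with the observation that properness of $f$ on the pieces of $W$ makes $W$ a union of closed subsets of the compactified scheme, so Proposition~\ref{Hom and tensor 0} applies there. You never compactify: you reduce all three assertions to $\R\vG^{}_{\!W}\psi(f)$, then to a single closed $Z\subseteq X$ proper over $Y$, and there you use only the normalization of $\psi$ (identity on proper maps, Corollary~\ref{relation}) plus its pseudofunctoriality along $Z\hookrightarrow X\to Y$ to see that $i^\times\psi(f)$ is invertible, converting ``$i^{\>!}$ kills the cone'' into ``$\R\vG^{}_{\!Z}$ kills the cone'' via $\R\vG^{}_{\!Z}\cong\operatorname{hocolim}_n\R\>\sHom_{\sX}(\OX/\CI^{\>n},-)$, duality for the finite thickenings $i_n$, and conservativity of $(-)^!$ along nilpotent immersions. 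What your route buys: no compactification, no Lemma~\ref{L1.1}, and properness enters exactly where it is conceptually needed ($\psi=\id$ on proper maps). What it costs: a few inputs the paper does not set up and which you should spell out when writing this up---the identification of $\{E : \supp(E)\subseteq W\}$ with the essential image of $\R\vG^{}_{\!W}$ requires Proposition~\ref{ideals} (and Lemma~\ref{suppRg}), not just the definition of $\Dqc(X)_W$; the hocolim description of $\R\vG^{}_{\!Z}$ must be justified (it is standard, via K-injective resolutions); and the conservativity of $\iota^!$ for a nilpotent immersion is valid here only because $\psi(f)$ is a transformation between functors landing in $\Dqcpl(X)$, so the cone is bounded below and a lowest-cohomology argument applies---keep that hypothesis visible. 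The paper's proof, by contrast, stays entirely within the lemmas it has already established (Lemma~\ref{L1.1} and Appendix~\ref{Support}), at the price of routing the argument through a compactification.
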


\begin{proof} 
By~\ref{Hom and tensor 0}(ii), it is enough to prove that Proposition~\ref{Gampsi}\va{.5} holds for
one~$E$ with \mbox{$\supp(E)=W\<$,} like\va1
 $E=\R\vG^{}_{\!W}\>\OX$ (see ~\ref{suppRg}). For such an~$E$,
\ref{Hom and tensor 0}~shows it enough to prove that $\RHqc{\sX}(E,\psi(f))$
is an isomorphism.\va1

Let $X\xto{u\>\>} Z\xto{p\>\>}Y$ be a compactification of $f$ (\S\ref{locimm}). In view of \eqref{psidef}, we need only treat the case $f=u$. In this case  
 it suffices to show, with $\epsilon^{}_2$ and $\eta^{}_1$
as in Remark~\ref{L1.1.25}, that
\begin{eqnarray*}
\RHqc{Z}\big(\R\>u_*E,\eta^{}_1\epsilon^{}_2\big) &\cong & 
\RHqc{Z}\big(\R\>u_*E,\R\>u_*\psi(u)\big)\\
&\cong& \R u_*\RHqc{\sX}\big(u^*\R\>u_*E,\psi(u)\big)\\
&\cong& \R u_*\RHqc{\sX}\big(E,\psi(u)\big)
\end{eqnarray*}
is an isomorphism.

Lemma~\ref{L1.1} gives that $\RHqc{Z}(\R\>u_*E,\epsilon^{}_2)$ is an isomorphism. It remains to be shown  that $\RHqc{Z}(\R\>u_{*}E,\eta^{}_{1})$ is an isomorphism.

The localizing immersion $u$ maps $X$ homeomorphically onto $u(X)$ \cite[2.8.2]{Nk}, so we can regard $X$ as a topological subspace of~$Z$. Let 
$i\colon V\hookrightarrow X$ be the inclusion into $X$ of a subscheme such that the restriction $fi=pui$ is proper. Then $ui$ is proper, and so $V$ is a closed subset of $Z\<$. Thus $W=\supp_\sX(E)=\supp_Z(\R\>u_*E)$ (see Remark~\ref{supp u_*}) is a union of subsets of~$X$ that are closed in $Z$. 
So Proposition~\ref{Hom and tensor 0} can be applied to show that,
since, by Lemma~\ref{L1.1}, $\R u_*E\Otimes{\<Z}\eta^{}_1$ is an isomorphism, therefore  $\RHqc{Z}(\R\>u_{*}E,\eta^{}_{1})$ is an isomorphism, as required.
\end{proof}

Let $W\subseteq X$ be as in Proposition~\ref{Gampsi}. Let $\Dqc{}(X)_{W}\subseteq\Dqc(X)$ be the essential image of 
$\R\vG^{}_{\!W}(X)$---the full subcategory spanned by the complexes that are exact outside $W\<$. By Lemma~\ref{Supp}, any $E\in\Dqc(X)_W$ satisfies
$\supp E\subseteq W$. Arguing as in \cite[\S2.3]{AJS}  one finds that the two
natural maps from $\R\vG^{}_{\!W}\R\vG^{}_{\!W}$ to $\R\vG^{}_{\!W}$ are 
\emph{equal isomorphisms;} and deduces that 
the natural map is an isomorphism 
\[
\Hom_{\D(X)}(E,\R\vG^{}_{\!W}F)
\iso\Hom_{\D(X)}(E,F)
\quad \big(E\in\Dqc(X)_{W},\,F\in\Dqc(Y)\big),
\]
with inverse the natural composition 
\[
\Hom_{\D(X)}(E,F)\to\Hom_{\D(X)}(\R\vG^{}_{\!W}E,\R\vG^{}_{\!W}F)\iso
\Hom_{\D(X)}(E,\R\vG^{}_{\!W}F).
\]

\begin{subcor}
\label{supports}
With the preceding notation, $\Rf\colon\Dqc{}(X)_{W}\to\Dqc(Y)$ has as right adjoint the functor\/ $\R\vG^{}_{\!W}f^\times\<$. 
When restricted to\/ $\Dqcpl(Y),$ this right adjoint is isomorphic to\/ 
$\R\vG^{}_{\!W}f^!\<$.
\end{subcor}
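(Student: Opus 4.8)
The plan is to derive both statements from three facts already on hand: the pseudofunctorial adjunction $\Rf\dashv f^\times$ of~\S\ref{^times}; the adjunction between the inclusion $\Dqc(X)_W\hookrightarrow\Dqc(X)$ and $\R\vG^{}_{\!W}$ recorded in the two displays immediately preceding the Corollary; and Proposition~\ref{Gampsi}.

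For the first assertion I would begin by noting that $\Rf$ carries all of $\Dqc(X)$, hence in particular the full subcategory $\Dqc(X)_W$, into $\Dqc(Y)$, so the restricted functor makes sense; and that, $f^\times$ being $\Dqc$-valued, $\R\vG^{}_{\!W}f^\times$ is a functor from $\Dqc(Y)$ into the essential image $\Dqc(X)_W$ of $\R\vG^{}_{\!W}$. Then, for $E\in\Dqc(X)_W$ and $G\in\Dqc(Y)$, I would splice together the natural isomorphisms
\[
\Hom_{\D(X)}(E,\>\R\vG^{}_{\!W}f^\times\<G)\iso\Hom_{\D(X)}(E,\>f^\times\<G)\iso\Hom_{\D(Y)}(\Rf E,\>G),
\]
the first being the isomorphism displayed before the Corollary taken with $F\set f^\times G$, the second the $\Rf\dashv f^\times$ adjunction. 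Since $\Dqc(X)_W$ is full in $\Dqc(X)$, this composite is exactly the adjunction isomorphism exhibiting $\R\vG^{}_{\!W}f^\times$ as a right adjoint of $\Rf|_{\Dqc(X)_W}$.

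For the second assertion I would apply the functor $\R\vG^{}_{\!W}(-)$ to the natural transformation $\psi(f)\colon f^\times\to f^!$; Proposition~\ref{Gampsi} says the result $\R\vG^{}_{\!W}\psi(f)$ is an isomorphism, and this is precisely the claimed identification of $\R\vG^{}_{\!W}f^\times$ with $\R\vG^{}_{\!W}f^!$ over $\Dqcpl(Y)$. Since every ingredient is already established, there is no genuine obstacle here; the only point that wants a little care is to make sure the pre-Corollary display is invoked with its variable $F$ specialized to the object $f^\times G\in\Dqc(X)$, and that one tracks naturality in both $E$ and $G$, so that the chain of $\Hom$-isomorphisms above assembles into an honest adjunction rather than a mere pointwise bijection.
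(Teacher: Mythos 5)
Your proposal is correct and follows essentially the same route as the paper: the adjunction $\Rf\dashv f^\times$, the isomorphism $\Hom_{\D(X)}(E,\R\vG^{}_{\!W}F)\cong\Hom_{\D(X)}(E,F)$ for $E\in\Dqc(X)_W$ established just before the Corollary, and Proposition~\ref{Gampsi} applied to $\psi(f)$ for the identification over $\Dqcpl(Y)$. The paper simply splices these into one chain of natural isomorphisms, exactly as you do.
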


\begin{proof}
For $E\in\Dqc(X)_{W}$ and $G\in\Dqc(Y)$, there are natural isomorphisms
\begin{align*}
\Hom_{\D(Y)}(\Rf E,G)
&\cong
\Hom_{\D(X)}(E,f^\times\< G)\\
&\cong 
\Hom_{\D(X)}(E,\R\vG^{}_{\!W}f^\times\<G)
\underset{\!\textup{\ref{Gampsi}}\,}\iso 
\Hom_{\D(X)}(E,\R\vG^{}_{\!W}f^!G). 
\end{align*}
\end{proof}

\begin{subrem}\label{barint}  
The preceding Corollary entails the existence of a counit map
\[
\bar{\>{\int}_{\!\!\!W}}\colon \Rf \R\vG^{}_{\!W} f^!\OY\to \OY.
\]
Factoring $f$ over suitable affine open subsets $U$ as $U\xto{\<i_U\>}Z\xto{\<h_U\>}Y$ 
where $i_U$ is finite and 
$h_U$ is essentially smooth,\va{.5} one gets that  $i_{U*}f^!\OY|_U$ is of the form 
$\R\>\sHom_Z(\R\> i_{U*}\OX,\Omega_{h_U}^n[n])$ for some $n=n_U$\va{.6} such that the sheaf 
$\Omega_{h_U}^n$ of relative $n$-forms is free of rank 1;\va{-.5} and hence local depth considerations\va{.6}
imply that there is an integer $d$\va1  such that $H^{-e}f^!\OY=0$ for all
$e> d$, while $\omega^{}_{\!f}\set H^{-d}f^!\OY\ne 0$. This $\omega_{f}$, determined up to isomorphism by $f\<$, is a \emph{relative dualizing sheaf} (or \emph{relative canonical sheaf}\kf) of $f$.

There results a natural composite map of $\OY$-modules
\begin{align*}\label{int}
\int_{\>W}\<\colon H^d\Rf \R\vG^{}_{\!W}(\omega^{}_{\!f})
&=H^0\Rf \R\vG^{}_{\!W}(\omega^{}_{\!f}[d])\\[-5pt]
&\lto H^0(\Rf \R\vG^{}_{\!W} f^!\OY)
\xto{\via\!\!\bar{\,\,\>\int^{}_{\<W}}}
H^0\OY=\OY,
\end{align*}
that generalizes the map denoted ``$\textup{res}_Z$" in \cite[\S3.1]{Sa}.  

A deeper study of this map\va{-1} 
involves the realization  of $\omega^{}_{\!f}\>$, for certain~$f\<$, in terms of regular differential forms, and the resulting relation of~$\int^{}_{W}$  with residues of differential forms,\va{.4} 
cf.~ \cite{HK1} and \cite{HK2}. See also \S\ref{intres} below.

\end{subrem}

\end{cosa}

\begin{subprop}
\label{pullback1}
Let\/ $W\xto{\,g\,} X\xto{\,f\,} Y$ be $\sE$-maps such that\/ $f\<g$ is proper.\va1 For any\/ $F\in\Dqc(X)$ and $G\in\Dqcpl(Y),$ 
the maps
\begin{equation*}
\label{via psi}
\LL g^*\RHqc{\sX}(F,f^\times\< G) \xto{\via \psi}\LL g^*\RHqc{\sX}(F,f^! G)
\tag{\ref{pullback1}.1}
\end{equation*}
\begin{equation*}\label{via psi'}
g^\times\RHqc{\sX}(F,f^\times\< G) \xto{\via \psi}g^\times\RHqc{\sX}(F,f^! G)
\tag{\ref{pullback1}.2}
\end{equation*}
\begin{equation*}\label{via psi''}
g^\times\R\>\sHom_{\sX}(F,f^\times\< G) \xto{\via \psi}g^\times\R\>\sHom_{\sX}(F,f^! G)
\tag*{(\ref{pullback1}.2)$'$}
\end{equation*}
are isomorphisms.
\end{subprop}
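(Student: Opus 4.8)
The plan is to deduce all three assertions from Proposition~\ref{Gampsi}, applied to a suitable closed subset of~$X$, together with the pseudofunctoriality of~$\psi$. The starting observation: since $f\<g$ is proper and every map in~$\sE$ is separated, $g$ is itself proper; hence $Z\set g(W)$ is a closed subset of~$X$, and, $g$ mapping~$W$ surjectively onto~$Z$ while $f\<g$ is universally closed, the map $f|_Z\colon Z\to Y$ (say with its scheme-theoretic-image structure) is universally closed by descent of universal closedness along the surjection $W\to Z$, hence proper. Thus $Z$ is a closed subset of~$X$ to which~$f$ restricts properly, so Proposition~\ref{Gampsi} applies with its ``$W$'' taken to be~$Z$: in particular $\RHqc{\sX}(E,\psi(f))$ is an isomorphism for every $E\in\Dqc(X)$ with $\supp E\subseteq Z$, and by Appendix~\ref{Support} these $E$ are precisely the objects in the essential image of~$\R\vG^{}_{\!Z}$.

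For $(\ref{pullback1}.2)$ and $(\ref{pullback1}.2)'$ I would not act on~$\psi(f)$ directly, but use the pseudofunctoriality of~$\psi$ (Corollary~\ref{relation}). For $G\in\Dqcpl(Y)$, compatibility of~$\psi$ with the composite $W\xto{\,g\,}X\xto{\,f\,}Y$ expresses $\psi(f\<g)(G)$, up to the canonical pseudofunctorial isomorphisms, as $\psi(g)(f^!G)\smallcirc g^\times\psi(f)(G)$. As $f\<g$ and~$g$ are proper, $\psi(f\<g)$ and~$\psi(g)$ restrict to the identity over proper maps, so $\psi(f\<g)(G)$ and $\psi(g)(f^!G)$ are isomorphisms; hence so is $g^\times\psi(f)(G)\colon g^\times f^\times G\to g^\times f^!G$. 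Feeding this into the functorial isomorphism $g^\times\RHqc{\sX}(F,-)\iso\RHqc{W}(\LL g^*F,\>g^\times(-))$ of~\eqref{^timesHom} yields $(\ref{pullback1}.2)$; and since $\RHqc{\sX}(F,-)=\id_X^\times\R\sHom_\sX(F,-)$ while $g^\times\id_X^\times=g^\times$ (both being right-adjoint to~$\R g_*$), $(\ref{pullback1}.2)'$ is the same statement.

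For $(\ref{pullback1}.1)$, let~$C$ be a cone of $\psi(f)(G)\colon f^\times G\to f^!G$ in~$\Dqc(X)$; the goal is $\LL g^*\RHqc{\sX}(F,C)=0$. By the starting observation, $\RHqc{\sX}(E,C)=0$, hence $\Hom_{\D(X)}(E,C[n])=0$ for all~$n$, whenever $\supp E\subseteq Z$. Using that $\RHqc{\sX}(F,-)$ is right-adjoint to $-\Otimes{\sX}F$ (\S\ref{RHom^qc}), I would deduce, for such~$E$,
\[
\Hom_{\D(X)}\!\big(E,\RHqc{\sX}(F,C)[n]\big)\cong\Hom_{\D(X)}\!\big(E\Otimes{\sX}F,\,C[n]\big)=0,
\]
because $\supp(E\Otimes{\sX}F)\subseteq\supp E\subseteq Z$. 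Taking $E=\R\vG^{}_{\!Z}\RHqc{\sX}(F,C)$ and using that $\R\vG^{}_{\!Z}$ is the coreflection onto the subcategory of complexes supported in~$Z$, the identity endomorphism of $\R\vG^{}_{\!Z}\RHqc{\sX}(F,C)$ must vanish; that is, $\R\vG^{}_{\!Z}\RHqc{\sX}(F,C)=0$, equivalently the natural map $\RHqc{\sX}(F,C)\to\R j_*\>j^*\RHqc{\sX}(F,C)$ is an isomorphism, where $j\colon X\setminus Z\hookrightarrow X$ is the inclusion. Finally, since $g(W)\subseteq Z$ one has $W\times_X(X\setminus Z)=\emptyset$, so flat base change along the open immersion~$j$ forces $\LL g^*\R j_*$ to vanish; in particular $\LL g^*\RHqc{\sX}(F,C)=0$, which is $(\ref{pullback1}.1)$.

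The step I expect to be the main obstacle is the one inside $(\ref{pullback1}.1)$ that converts the support hypothesis on the \emph{first} argument of~$\RHqc{\sX}$ in Proposition~\ref{Gampsi} into the statement that $\RHqc{\sX}(F,C)$ is annihilated by~$\R\vG^{}_{\!Z}$ (whence $\LL g^*$ kills it, $g$ factoring through~$Z$); this rests on the formal properties of~$\R\vG^{}_{\!Z}$ and the bijection between subsets and localizing tensor ideals recalled in Appendix~\ref{Support}. The remaining ingredients---properness of~$f|_Z$, and the identity $g^\times\id_X^\times=g^\times$ used for $(\ref{pullback1}.2)'$---are routine.
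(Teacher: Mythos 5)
Your argument takes a genuinely different route from the paper's, and most of it is sound. For \eqref{via psi'} and \ref{via psi''} it is in fact shorter than the paper's: pseudofunctoriality of $\psi$ (Theorem~\ref{relation}) applied to the composite $f\smallcirc g$, together with properness of $fg$ and of $g$ and the fact that $G$ and $f^!G$ lie in $\Dqcpl$, does show that $g^\times\psi(f)G$ is an isomorphism, and feeding this into the conjugate isomorphism \eqref{^timesHom} yields \eqref{via psi'} with no support theory at all; the paper instead reduces to the case where $g$ is a closed immersion, applies $g_*$, and combines \eqref{sheaf dual} with Proposition~\ref{Gampsi}. Your treatment of \eqref{via psi} (cone $C$ of $\psi(f)G$, vanishing of $\R\vG^{}_{\!Z}\RHqc{\sX}(F,C)$ for $Z\set g(W)$, then killing $\LL g^*$) is also workable in outline, except that the last step is not literally ``flat base change along $j$'': the flat leg $j$ is the map being pushed forward, not the one you pull back along. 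This is easily repaired: writing $g=i\smallcirc g'$ with $i\colon Z\hookrightarrow X$ the closed immersion, the projection formulas for $i$ and for $j$ give $i_*\LL i^*\R j_*A\cong \R j_*A\Otimes{\sX}i_*\OZ\cong \R j_*(A\Otimes{X\setminus Z}j^*i_*\OZ)=0$, whence $\LL i^*\R j_*A=0$ and so $\LL g^*\R j_*A=0$, and your conclusion stands (this also avoids any boundedness caveat in citing a tor-independent base-change theorem).

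The genuine gap is at the outset, in the claim that $f|_Z\colon Z\to Y$ is proper. Universal closedness does descend along the surjection $W\to Z$, and separatedness is clear, but properness also requires $Z\to Y$ to be of \emph{finite type}, and that is not automatic here: $Z$ is a closed subscheme of $X$, which is only \emph{essentially} of finite type over $Y$, so a priori $f|_Z$ is merely essentially of finite type, and ``universally closed, hence proper'' is not a valid inference in $\sE$. This is exactly the point on which the paper's proof spends its effort: with $f=pu$ a compactification, it shows that the image of $W$ is mapped by $u$ via a closed immersion into the compactification (using properness of the composite into the compactification, surjectivity of $W$ onto its image, and \cite[2.8.2]{Nk}), so that the restriction of $f$ is of finite type, and only then invokes \cite[5.4.3]{EGA2} to conclude properness. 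Your claim is true, but it requires an argument of this kind (equivalently: a universally closed localizing immersion is a closed immersion); as written, the hypothesis of Proposition~\ref{Gampsi}---which asks for properness, not merely universal closedness, of the restriction of $f$---has not been verified. Note that only your proof of \eqref{via psi} depends on this point; your proofs of \eqref{via psi'} and \ref{via psi''} are independent of it and complete.
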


\begin{proof} Since $g^\times\<\<\id_\sX^\times\cong (\id_\sX\!\smallcirc g)^\times=g^\times\<$, therefore  \eqref{via psi'} is an isomorphism if and only if so is \ref{via psi''}. (Recall that $\RHqc{\sX}=\id_\sX^\times\R\sHom_\sX$.) \va1

As for \eqref{via psi'} and \eqref{via psi}, note first that
the proper map $g$ induces a surjection $g^{}_2$ of~$\>W$ onto a closed subscheme $V$ of~$X\<$; 
 so $g=g^{}_1g^{}_2$ with $g^{}_1$ a 
closed~immersion and $g^{}_2$  surjective. \va1

Let $X\xto{u\>\>}Z\xto{p\>\>}Y$ be a compactification of $f$.
Since $p\>ug^{}_1g^{}_2$ is proper, so is $ug^{}_1g^{}_2$, whence $ug^{}_1$ maps $V=g^{}_2(g^{-\<1}_2V)$\va{.6} homeomorphically onto a closed subset of $Z$, and for each 
$x\in V$ the natural map\va{.6} $\sO_{\<Z\<, \>ug^{}_1\<x}\to\sO_{\<V\<\<,\>\>x}$ is a surjection (see \cite[2.8.2]{Nk}); thus $ug^{}_1$ is a closed immersion, and therefore 
$f\<g^{}_1=p\>ug^{}_1$ is of finite type, hence, by \cite[5.4.3]{EGA2}, proper (since $fg^{}_1g^{}_2$ is).\va1

Since $\LL g^*=\LL g_2^*\LL g_1^*$ and $g^\times=g_2^\times g_1^\times$, 
it suffices that the Proposition hold when $g=g_1^{},$ i.e., we may assume that $g\colon W\to X$ \emph{is a closed immersion.} It's enough then to 
show that \eqref{via psi} and \eqref{via psi'} become isomorphisms after 
application of the functor $g_*\>$.

Via  projection isomorphisms, the map
\[
g_*\LL g^*\RHqc{\sX}(F,f^\times\< G) \xto{\<g_{\mkern-.5mu*}\<\eqref{via psi}\>}g_*\LL g^*\RHqc{\sX}(F,f^! G)
\]
is isomorphic to the map
\begin{equation*}\label{map3}
g_*\OW\Otimes{\sX}\RHqc{\sX}(F,f^\times\< G)
\xto{\<\<\via \psi\>\>} g_*\OW\Otimes{\sX}\RHqc{\sX}(F,f^!G);
\tag{\ref{pullback1}.3}
\end{equation*}
and making the substitution
\[ 
\big(f\colon X\to Z, E, F\big)\mapsto \big(g\colon W\to X, \OW, \RHqc{\sX}(F,f^\times\< G)\big)
\] in the isomorphism \eqref{sheaf dual} leads to an isomorphism between the map
\[
g_*g^\times\RHqc{\sX}(F,f^\times\< G) \xto{\<g_{\mkern-.5mu*}\<\eqref{via psi'}\>}g_*g^\times\RHqc{\sX}(F,f^!G)
\] 
and the map\va3
\begin{equation*}\label{map4}
\RHqc{\sX}(g_*\OW, \>\RHqc{\sX}(F,f^\times\< G)) \xto{\via \psi}
\RHqc{\sX}(g_*\OW, \>\RHqc{\sX}(F,f^! G)).
\tag{\ref{pullback1}.4}
\end{equation*}
Via adjunction and projection isomorphisms, \eqref{map4} is isomorphic to \begin{equation*}\label{map5}
\RHqc{\sX}(g_*\LL g^*\<\<F, f^\times\< G) \xto{\via \psi}
\RHqc{\sX}(g_*\LL g^*\<\<F, f^! G).
\tag{\ref{pullback1}.5}
\end{equation*}
By Lemma~\ref{Supp}, $\supp(g_*\LL g^*\<\<F)\subseteq\Supp(g_*\LL g^*\<\<F)\subseteq W\<$,
so \ref{Gampsi} gives that \eqref{map5} is an isomorphism, whence so is \eqref{map4}.

Thus \eqref{via psi'} is an isomorphism. Also, \mbox{$\supp(g_*\OW)=\Supp(\OW)=W$,} so~\ref{Hom and tensor 0} shows that  \eqref{map3} is an isomorphism, whence so is \eqref{via psi}.
\end{proof}

\pagebreak
\begin{subrem}\label{for fTd} (Added in proof.)
 For an $\sE$-map $f$, with compactification $f=pu$,  set 
\[
(p,u)^!\>G\set u^!p^!\>G\qquad\big(G\in\Dqc(Y)\big).
\]
It is shown in \cite[Section 4]{Nm4} that $(p,u)^!$
depends only on $f\<$, in the sense that up to canonical isomorphism
$(p,u)^!$ is independent of the factorization $f=pu$. (When this paper was written
it was known only that $(p,u)^!G$ is canonically isomorphic to $f^!G$
when $G\in\Dqcpl(Y)$.) Likewise, for all $G\in\Dqc(Y)$ the functorial map 
 \[
\psi(p,u)(G)\colon f^\times G \iso u^\times p^\times G= u^\times p^! G\xto{\!\psi(u)p^!} u^!p^!G= (p,u)^!G
\]
depends only on $f$ \cite[Section 8]{Nm4}. So one may set $f^!\set(p,u)^!$ and $\psi(f)\set\psi(p,u)$; and then the preceding proof of
Proposition~\ref{pullback1} works for \emph{all} $G\in\Dqc(Y)$.
\end{subrem}

\section{Examples}\label{section:examples}
Corollaries \ref {psi via RHom}--\ref{varGM} provide concrete interpretations of the map~ 
$\psi(u)$ for certain localizing immersions $u$. 

Proposition~\ref{R1.1.3.5} gives a purely algebraic expression
for $\psi(f)$ when $f$ is a \emph{flat} $\sE$-map between affine schemes.  
An elaboration for when the target of $f$ is the Spec of a field  is given in Proposition~\ref{affine/k}. The scheme\kf-theoretic results ~\ref{relation},  \ref{base change} and~\ref{pullback1}
tell us some facts about the pseudofunctorial behavior of $\psi(f)$; but how to prove these facts by purely algebraic arguments is left open.

\medbreak
\begin{lem}
\label{RHom} 
Let\/ $f\colon X\to Z$ be an\/ $\sE$-map, and let\/ $F\in\Dqc(Z)$. The  functorial isomorphism~$\zeta(F\>)$ inverse to that gotten by setting\/ $E=\OX$ in\/~ \eqref{sheaf dual}
makes the following, otherwise natural, functorial diagram commute$\>:$
\[
\def\1{$\R\>\fst f^\times\<\< F$}
\def\3{$\RHqc{Z}(\R\>\fst\OX,F\>)$}
\def\4{$\RHqc{Z}(\OZ,F\>)$}
\def\5{$F$}
 \bpic[xscale=6, yscale=1.0]

   \node(11) at (1,-1){\3};   
   \node(12) at (2,-1){\1}; 
  
   \node(31) at (1,-3){\4};  
   \node(32) at (2,-3){\5};
   
    \draw[->] (11)--(12) node[above=1pt, midway, scale=.75] {$\zeta(F\>)$};  
    
    \draw[->] (31)--(32) node[above=1pt, midway, scale=.75] {$\Iso$}; 
    
     \draw[->] (11)--(31) ;
 
     \draw[->] (12)--(32) ;

    \epic
\]
\end{lem}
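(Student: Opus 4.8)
The plan is to unwind the definition of the vertical maps and the isomorphism $\zeta(F)$, and reduce the commutativity to a bookkeeping statement about the projection isomorphism and the counit of the adjunction $\R\fst \dashv f^\times$. First I would identify the two vertical arrows. The left-hand arrow $\RHqc{Z}(\R\fst\OX, F) \to \RHqc{Z}(\OZ, F)$ is induced by the natural map $\OZ \to \R\fst\OX$ (the unit, i.e.\ the map $\OZ = \R\fst\Lf\OZ \to \R\fst\OX$ coming from $\OZ\to \fst\OX$), while the bottom arrow $\RHqc{Z}(\OZ,F)\iso F$ is the canonical evaluation isomorphism. The right-hand arrow $\R\fst f^\times F \to F$ is the counit $\epsilon_f$ of the adjunction $\R\fst \dashv f^\times$.

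Next I would recall that $\zeta(F)$ is, by definition, the inverse of the instance $E=\OX$ of the sheafified duality isomorphism \eqref{sheaf dual}, namely
\[
\R\fst\RHqc{\sX}(\OX, f^\times F) \iso \RHqc{Z}(\R\fst\OX, F),
\]
which (using $\RHqc{\sX}(\OX,-)\cong \id$) reads $\R\fst f^\times F \iso \RHqc{Z}(\R\fst\OX,F)$. By \S\ref{conjugate}, this map is right-conjugate, for fixed $E=\OX$, to the projection isomorphism $\R\fst(\Lf G \Otimes{\sX}\OX) \osi G\Otimes{Z}\R\fst E$; concretely, $\zeta(F)^{-1}$ is the composite obtained by adjointness from the projection isomorphism and the counit $\epsilon_f$. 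So the content of the diagram is: composing $\zeta(F)^{-1}$ (i.e.\ $\R\fst f^\times F \to \RHqc{Z}(\R\fst\OX,F)$) with restriction along $\OZ\to\R\fst\OX$ and then with evaluation at $\OZ$ gives back the counit $\epsilon_f\colon \R\fst f^\times F\to F$. I would verify this by taking $\Hom_{\D(Z)}(-, -)$ or, more transparently, by testing against an arbitrary $\R\fst$-source: apply $\Hom_{\D(Z)}(\R\fst E, -)$ for $E\in\Dqc(X)$ and chase, using that the duality isomorphism \eqref{sheaf dual} is built precisely so that the triangle relating it to $\epsilon_f$ and the projection map commutes, together with the identity $\OZ\to\R\fst\OX$ being $\R\fst$ applied to $\id_{\OX}$ followed by the unit. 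The naturality/monoidal-unit bookkeeping (that the projection isomorphism composed with $G\mapsto G\Otimes{Z}\OZ$-normalization is compatible with the evaluation $\RHqc{Z}(\OZ,-)\cong\id$) is standard, e.g.\ from \cite[\S3.4]{li}.

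The main obstacle, such as it is, will be purely organizational: pinning down the precise normalization conventions so that the three "identity-object" isomorphisms ($\RHqc{\sX}(\OX,-)\cong\id$, $\RHqc{Z}(\OZ,-)\cong\id$, and $-\Otimes{}\OZ\cong\id$) are threaded through \eqref{sheaf dual} consistently, and checking that the map $\OZ\to\R\fst\OX$ appearing on the left edge is indeed the one that \eqref{sheaf dual} produces when one specializes the projection isomorphism at $E=\OX$. Once the conventions are fixed, the commutativity is forced by the defining (co)unit triangle of the adjunction and the coherence of the projection isomorphism, so no genuine computation is required; I would simply cite \cite[3.4.6, 3.4.7]{li} for the monoidal coherence and \S\ref{conjugate} for the conjugation formalism and leave the final diagram chase to the reader.
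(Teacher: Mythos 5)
Your proposal is correct and takes essentially the same route as the paper: both reduce the commutativity, via the description in \S\ref{conjugate} of \eqref{sheaf dual} as the right conjugate of the projection isomorphism, to the standard compatibility of the projection map with the unit maps, for which the paper cites \cite[3.4.7(ii)]{li}. The only (cosmetic) difference is that the paper carries this out by exhibiting the entire square as the right conjugate of the corresponding square of left-adjoint functors in the variable $G$, whereas you implement the same principle by an adjunction/Yoneda chase on the counit.
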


\begin{proof} Abbreviating $\R\>\fst$ to $\fst$ and $\LL f^*$ to $f^*\<\<$, one checks that the diagram in question is right-conjugate to the 
natural diagram, functorial in $G\in\Dqc(Z)$,
\[
\def\1{$G\Otimes{Z}\fst \OX$}
\def\2{$\fst(f^*G\Otimes{X}\OX)$}
\def\3{$\fst f^*G$}
\def\4{$G,$}
\def\5{$G\Otimes{Z}\OZ$}
 \bpic[xscale=3.7, yscale=2]

   \node(11) at (1,-1){\1};   
   \node(12) at (2,-1){\2}; 
 
   \node(13) at (3,-1){\3}; 
 
   \node(21) at (1,-2){\5};  
   \node(23) at (3,-2){\4};
   
    \draw[<-] (11)--(12) node[above=1pt, midway, scale=.75] {$\textup{projection}$};  
    \draw[<-] (12)--(13) node[above=1pt, midway, scale=.75] {$\Iso$}; 
 
    \draw[<-] (21)--(23) node[above=1pt, midway, scale=.75] {$\Iso$}; 
    
     \draw[<-] (11)--(21) ;

     \draw[<-] (13)--(23) ;    
    
   \epic
\]
whose commutativity  is given by \cite[3.4.7(ii)]{li}.
\end{proof}

\begin{subcor}\label{psi via RHom} For any localizing immersion\/  $u\colon X\to Z$  and $F\in\Dqc(Z),$ 
the map $\psi(u)\big(F\big)$ from ~\textup{\ref{relation}}
is isomorphic to the natural composite map
\[
u^*\RHqc{Z}(\R u_*\OX,F\>)\lto
u^*\RHqc{Z}(\OZ,F\>)\iso u^*\<\<F.
\]
\end{subcor}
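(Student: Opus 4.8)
The plan is to unwind the definition of $\psi(u)$ from Theorem~\ref{relation} and match it, step by step, with the composite in the statement, using Lemma~\ref{RHom} as the bridge between $u^\times$ and the internal hom $\RHqc{Z}(\R u_*\OX,-)$. Recall that for the localizing immersion $u$, $\psi(u)(F)$ is by definition the composite
\[
u^\times F\iso u^*\R u_* u^\times F\xto{\via\,\epsilon^{}_2} u^*F,
\]
where $\epsilon^{}_2\colon\R u_*u^\times\to\id$ is the counit of $\R u_*\dashv u^\times$ and the first isomorphism is $(\epsilon^{}_1)^{-1}$ applied to $u^\times F$. So the task is to produce a commutative square whose top row is this composite, whose bottom row is $u^*\RHqc{Z}(\R u_*\OX,F)\to u^*\RHqc{Z}(\OZ,F)\iso u^*F$, and whose vertical maps are the isomorphisms $u^\times F\iso u^*\RHqc{Z}(\R u_*\OX,F)$ coming from applying $u^*$ to the case $E=\OX$ of the sheafified-duality isomorphism \eqref{sheaf dual}, i.e. to $\R u_*u^\times F\iso\RHqc{Z}(\R u_*\OX,F)$, together with $\R u_*u^\times F\iso\RHqc Z(\R u_*\OX,F)$ unwound via $\epsilon^{}_1$.

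First I would record the content of Lemma~\ref{RHom} in the form it is actually needed: for any $\sE$-map $f$, the isomorphism $\zeta(F)\colon\RHqc{Z}(\R\fst\OX,F)\iso\R\fst f^\times F$ fits in a commutative triangle with the natural map $\RHqc Z(\R\fst\OX,F)\to\RHqc Z(\OZ,F)\iso F$ on one side and $\R\fst f^\times F\xto{\epsilon^{}_2}F$ (the counit, which is exactly the edge $\R\fst f^\times F\to F$ in the lemma's diagram) on the other. Specializing $f=u$ and then applying the functor $u^*$ to this triangle gives a commutative triangle relating $u^*\RHqc Z(\R u_*\OX,F)$, $u^*\R u_*u^\times F$, and $u^*F$. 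Next I would splice on the isomorphism $u^\times F\iso u^*\R u_* u^\times F$ (namely $u^*\epsilon^{}_2$'s companion, $(\epsilon^{}_1 u^\times F)^{-1}$, or equivalently $\eta^{}_2$-type maps from \ref{cosa:localizing-immersion}) on the left; the composite $u^\times F\iso u^*\R u_*u^\times F\xto{u^*\epsilon^{}_2}u^*F$ is, by the very definition recalled above, $\psi(u)(F)$. That identifies the ``top-and-left'' route through the big diagram with $\psi(u)(F)$, and the ``bottom'' route is visibly the composite asserted in the Corollary; commutativity of the square is then exactly the $u^*$-image of the Lemma~\ref{RHom} triangle, once one checks the vertical isomorphism $u^\times F\iso u^*\RHqc Z(\R u_*\OX,F)$ is the one obtained by composing $u^*\zeta(F)^{-1}$ with $(\epsilon^{}_1 u^\times F)^{-1}$.

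The only genuinely delicate point is bookkeeping: one must be sure that the isomorphism $u^\times F\cong u^*\RHqc Z(\R u_*\OX,F)$ used to phrase the Corollary is \emph{the same} composite of \eqref{sheaf dual} and $\epsilon^{}_1$ that makes the square commute — there is no canonical ambiguity here, but it must be spelled out that Corollary~\ref{psi via RHom} is a statement ``up to a specified isomorphism,'' and that this specified isomorphism is built exactly from $\zeta$ of Lemma~\ref{RHom} and the isomorphism $\epsilon^{}_1\colon u^*\R u_*\iso\id$ of \ref{cosa:localizing-immersion}. Once that identification is pinned down, the proof is a one-line appeal: \emph{apply $u^*$ to the commutative diagram of Lemma~\ref{RHom} with $f=u$, and precompose with the isomorphism $u^*(\epsilon^{}_1)^{-1}$; the resulting outer rectangle says precisely that $\psi(u)(F)$ equals the displayed composite.} I expect the main obstacle to be purely expository — making the match of isomorphisms unambiguous — rather than any real mathematical difficulty, since all the substantive work (the compatibility of duality with the counit) is already packaged in Lemma~\ref{RHom}.
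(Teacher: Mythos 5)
Your proposal is correct and is essentially the paper's own argument: the paper proves the corollary by the one-line appeal to Lemma~\ref{RHom} with $f=u$, exactly as you do, with the remaining content being the unwinding of $\psi(u)$ as $u^*\epsilon_2^{}\smallcirc(\epsilon_1^{}u^\times)^{-1}$ that you spell out. The extra bookkeeping you flag (pinning down the comparison isomorphism as $u^*\zeta(F)$ composed with $\epsilon_1^{}$) is the right reading of ``isomorphic to'' in the statement and requires no further ideas.
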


\begin{proof} 
This is immediate from \ref{RHom} (with $f=u$).
\end{proof}

For the next Corollary recall that, when $Z=\spec R$, the sheafification functor 
${}^\sim={}^{\sim_R}$ is an isomorphism
from $\D(R)$ to the derived category of quasi-coherent $\OZ$-modules, whose inclusion into
$\Dqc(Z)$ is an equivalence of categories \cite[p.\,230, 5.5]{BN}.

\begin{subcor}\label{affine locimm}
In\/ \textup{\ref{psi via RHom},} if\/ $X=\spec S$ and\/ $Z=\spec R$ are affine---so that $u$ corresponds to a flat epimorphic ring homomorphism $R\to S$---and\/ $M\in\D(R),$
then\/ $\psi(u)(M^\sim)$ is the sheafification of the natural \/ $\D(S)$-map
\[
\R\<\<\Hom_R(S,M)\cong S\otimes_R \R\<\<\Hom_R(S,M)\to S\otimes_R (\R\<\<\Hom_R(R,M))=S\otimes_R M.
\]
\end{subcor}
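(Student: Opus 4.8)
The plan is to read the answer off Corollary~\ref{psi via RHom} by transporting everything through the sheafification equivalence $(-)^{\sim}\colon\D(R)\iso\Dqc(Z)$ of \cite[p.\,230, 5.5]{BN}. By \ref{psi via RHom} (with $f=u$), $\psi(u)(M^{\sim})$ is isomorphic to the composite
\[
u^{*}\RHqc{Z}(\R u_{*}\OX,M^{\sim})\lto u^{*}\RHqc{Z}(\OZ,M^{\sim})\iso u^{*}M^{\sim},
\]
where the first arrow is $\LL u^{*}$ applied to the map induced by the canonical $\OZ\to\R u_{*}\OX$.

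First I would translate each of the three functors appearing here. Since $u$ is an affine morphism, $\R u_{*}$ is exact on quasi-coherent complexes and $\R u_{*}\OX\cong S^{\sim}$ with $S$ regarded as an $R$-module; moreover the canonical map $\OZ\to\R u_{*}\OX$ is the sheafification of the ring map $R\to S$. Since $u$ is flat one has $\LL u^{*}=u^{*}$, and under $(-)^{\sim}$ this functor corresponds to $S\Otimes{R}(-)=S\otimes_{R}(-)$ (using $R$-flatness of $S$). Finally, for $A,B\in\D(R)$ there is a natural isomorphism $\RHqc{Z}(A^{\sim},B^{\sim})\cong\R\Hom_{R}(A,B)^{\sim}$: both sides represent the functor $C\mapsto\Hom_{\D(R)}(C\Otimes{R}A,B)$ on $\Dqc(Z)$, because $\RHqc{Z}(A^{\sim},-)$ is right adjoint to $-\Otimes{Z}A^{\sim}$ (\S\ref{RHom^qc}), sheafification is monoidal, and $\R\Hom_{R}(A,-)$ is right adjoint to $-\Otimes{R}A$. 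Assembling these, $\psi(u)(M^{\sim})$ becomes the sheafification of the $\D(S)$-map
\[
S\otimes_{R}\R\Hom_{R}(S,M)\lto S\otimes_{R}\R\Hom_{R}(R,M)=S\otimes_{R}M
\]
induced by $R\to S$.

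It then remains only to recognize the left-hand term. Since $R\to S$ is an epimorphism, the multiplication $S\otimes_{R}S\iso S$ is an isomorphism, so for any complex $N$ of $S$-modules the natural map $N\to S\otimes_{R}N$ is an isomorphism; applying this to $N=\R\Hom_{R}(S,M)$ with its canonical $S$-module structure yields the first isomorphism in the statement and finishes the identification. The only step requiring a bit of care is the handling of the quasi-coherator in the definition of $\RHqc{Z}$---that it is harmless after sheafification on the affine scheme $Z$---but this is precisely what the Yoneda argument above settles; the remaining verifications are routine.
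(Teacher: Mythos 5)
Your argument is correct and follows essentially the same route as the paper: the paper's proof simply lists the needed translation facts (the identification $\RHqc{Z}(A^\sim,B^\sim)\cong\R\Hom_R(A,B)^\sim$ via the same adjunction/Yoneda argument, $u^*(A^{\sim_R})=(S\otimes_R A)^{\sim_S}$, $\R u_*\OX\cong S^\sim$, and the fact that the counit $u^*\R u_*N^{\sim_S}\to N^{\sim_S}$ sheafifies $S\otimes_R N\to N$, which for the flat epimorphism $R\to S$ is the inverse of $N\cong S\otimes_R N$), and you assemble exactly these ingredients to transport the composite of Corollary~\ref{psi via RHom} through the equivalence $(-)^\sim$.
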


\begin{proof} Use the following well-known facts:\va2

1. $\RHqc{Z}(A^\sim\<,B^\sim)\cong\R\<\<\Hom^{}_R(A,B)^\sim\qquad\big(A,B\in\D(R)\big)$.\va2

\begin{small}
This results from the sequence of natural isomorphisms, for any $C\in\D(R)$:
\begin{align*}
\Hom_{\D(Z)}(C^\sim\<,\R\<\<\Hom_R(A,B)^\sim)&\cong \Hom_{\D(R)}(C,\R\<\<\Hom_R(A,B))\\
&\cong \Hom_{\D(R)}(C\Otimes{R}\!A,B)\\
&\cong \Hom_{\D(Z)}((C\Otimes{R}\!A)^\sim\<,B^\sim)\\
&\cong \Hom_{\D(Z)}(C^\sim\!\Otimes{Z}\!\<A^\sim\<,B^\sim)\\
&\cong \Hom_{\D(Z)}(C^\sim\<,\>\RHqc{Z}(A^\sim\<,B^\sim)).
\end{align*}
\end{small}
\vskip-8pt
2. $\R\<\<\Hom_R(S,M)^{\sim_R}= u_*\R\<\<\Hom_R(S,M)^{\sim_S}$.\va1

3. $u^*(A^{\sim_R})=(S\otimes_R A)^{\sim_S}\qquad\big(A\in\D(R)\big).$\va1

4. For any $N\in\D(S)$, the natural $\D(Z)$-map $u^*\R u_*N^{\sim_S}\to N^{\sim_S}$ is the sheafification of the natural $\D(S)$-map $S\otimes_R N\to N$.
\end{proof}

\begin{subcor}\label{varGM}
 Let\/ $R$ be a  noetherian ring that is complete with respect to the topology defined by an ideal~ $I\<,$ let\/ $p\colon Z\to \spec R$ be a  proper map,
and let\/ $X\set(Z\setminus p^{-1}\spec R/I)\overset {\lift .5,u\>\>\>,}{\hookrightarrow} Z$ be the inclusion.\va{.7} 
For any\/ $F\in\Dqc(Z)$ whose cohomology modules are all coherent, $u^\times\<\< F=0$.
\end{subcor}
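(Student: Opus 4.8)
I would aim to compute $u^\times F$ directly from the description of $\psi(u)$ and $f^\times$ in Corollary~\ref{psi via RHom}, then show the relevant $\R\sHom$ vanishes by a formal-completeness argument. Since $u\colon X\hookrightarrow Z$ is a localizing (in fact open) immersion, Corollary~\ref{psi via RHom} identifies
\[
u^\times F\cong u^*\RHqc{Z}(\R u_*\OX,F\>),
\]
so it suffices to prove that $\R\sHom_Z(\R u_*\OX,F\>)$ restricts to $0$ on $X$. Equivalently, writing $j\colon Z'\set p^{-1}(\spec R/I)\hookrightarrow Z$ for the complementary closed immersion, I would show that the restriction map $\R\sHom_Z(\OZ,F)\to \R u_*u^*\R\sHom_Z(\OZ,F)=\R u_* u^*F$ is an isomorphism, i.e. that $F\cong \R\vG_{Z'}F$ in $\D(Z)$; then $u^*F$ would be obtained from something supported entirely on $Z'$, and restricting to the complement $X$ gives $0$. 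So the crux is: \emph{every $F\in\Dqc(Z)$ with coherent cohomology is supported on $Z'=p^{-1}(\spec R/I)$.}

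**Main step.** This is where the completeness hypothesis enters. The point is that $R$ being $I$-adically complete, together with $p$ proper, forces the coherent cohomology sheaves of $F$ to be $I$-adically complete $\sO_Z$-modules in the appropriate sense — more precisely, by the formal-functions / Grothendieck existence comparison, a coherent sheaf $\sF$ on $Z$ is recovered from its formal completion along $Z'$, and in particular $\sF$ has no sections supported away from $Z'$ unless... — wait, that is not literally true (e.g. $\OZ$ itself is not supported on $Z'$). Let me reconsider: the correct mechanism must instead use that $u^\times = u^!$ composed appropriately is governed by \emph{local cohomology}, and that the point is not $F$ but $\R u_*\OX$, whose higher direct images under $p$ vanish by completeness. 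I would therefore instead compute $\R p_*u^\times F$ via the sheafified duality isomorphism \eqref{sheaf dual} (with $E=\OX$, $f=p$... but $u$ is the map, not $p$). The cleaner route: apply $\R p_*$, use that $\R p_* \R u_* \OX = \R p_* \OX$ and a formal-duality identity; the vanishing $u^\times F=0$ will follow because $\R\Hom$ out of $\R p_*\OX$ into $\R p_* F$ sees only the ``formal neighborhood'' part, which by $I$-adic completeness of $R$ already accounts for all of $\R p_* F$, leaving nothing on the open part $X$. Concretely I would invoke the theorem on formal functions: $(\R^i p_*\OZ)^{\widehat{\phantom{n}}} \cong \varprojlim \R^i p_{n*}\sO_{Z_n}$, and the analogous statement for coherent $F$, to conclude that the ``difference'' between $\R p_*\OZ$ and $\R p_* \R u_* \OX$ — which is $\R p_* \R\vG_{Z'}\OZ$ shifted — pairs trivially with $F$.

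**Expected obstacle.** The genuinely delicate point is translating the global completeness hypothesis on $R$ into a \emph{local} statement on $Z$ that kills $u^\times F$ sheaf-theoretically, since $u^\times$ is not local on $Z$ in an obvious way (it is a right adjoint, built from a compactification). I expect the honest proof runs as follows: choose a compactification of $X\to\spec R$ through $Z$, reduce via \eqref{psidef}-type bookkeeping and Lemma~\ref{RHom} to computing $\R\sHom_Z(\R u_*\OX, F)$ on $X$; then use that on $X$ one has the exact triangle $\R\vG_{Z'}\OZ \to \OZ \to \R u_*\OX \to$, so $u^\times F$ sits in a triangle with $u^*\RHqc{Z}(\R\vG_{Z'}\OZ, F)[1]$ and $u^*F$; and finally show the connecting map $u^*F \to u^*\RHqc{Z}(\R\vG_{Z'}\OZ,F)[1]$ is an isomorphism. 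That last isomorphism is exactly Greenlees–May / local duality in the form: for $F$ with coherent cohomology and $R$ $I$-complete, $\R\Hom_Z(\R\vG_{Z'}\OZ,F)\cong \Lambda_{Z'}F\cong F$ (derived completion equals $F$ because $R$ is already complete and $F$ is coherent), whence after restricting to $X$ both $u^*F$ and $u^*\RHqc{Z}(\R\vG_{Z'}\OZ,F)$ vanish — giving $u^\times F=0$ as a cone of a map between zero objects. I would write it this way, citing the Greenlees–May equivalence (as in \cite{AJS}) for the completion identification; the main obstacle is getting the bookkeeping of Lemma~\ref{RHom} and the triangle to interact correctly with the non-local functor $u^\times$.
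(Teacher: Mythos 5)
Your closing paragraph does arrive at the skeleton of the paper's argument---apply $\RHqc{Z}(-,F\>)$ to the triangle $\R\vG^{}_{\!W}\OZ\to\OZ\to\R u_*\OX$ (where $W\set p^{-1}\spec R/I$) and invoke Greenlees--May---but the endgame is wrong (and your first two attempts you either retracted yourself or left at the level of a gesture toward formal functions). You conclude that both $u^*F$ and $u^*\RHqc{Z}(\R\vG^{}_{\!W}\OZ,F\>)$ vanish, so that $u^\times F$ is ``a cone of a map between zero objects.'' But $u^*F$ is not zero in general (take $F=\OZ$), and the claim also contradicts your own identification of $\RHqc{Z}(\R\vG^{}_{\!W}\OZ,F\>)$ with $F$. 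The correct endgame is global on $Z$: one proves that $\alpha\colon F\cong\RHqc{Z}(\OZ,F\>)\to\RHqc{Z}(\R\vG^{}_{\!W}\OZ,F\>)$ is an isomorphism, so that the third vertex $\RHqc{Z}(\R u_*\OX,F\>)$---which by Lemma~\ref{RHom} (with $f=u$) is $\R u_*u^\times\<\<F$---vanishes outright, whence $u^\times\<\< F\cong u^*\R u_*u^\times\<\< F=0$ by \S\ref{cosa:localizing-immersion}. Neither term of the map $u^*\alpha$ is zero; what vanishes is its fiber.

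The strategy of restricting to $X$ and arguing that everything in sight lives on the closed fiber cannot be repaired, because it conflates $\R\sHom_Z$ with $\RHqc{Z}=\id_Z^\times\R\sHom_Z$. Greenlees--May \cite[0.3]{AJL1} identifies $\R\sHom_Z(\R\vG^{}_{\!W}\OZ,F\>)$ with the derived completion $\LL\Lambda_WF$, which is indeed a complex of sheaves vanishing outside $W$ and hence killed by $u^*$; but then the triangle only yields $u^*\R\sHom_Z(\R u_*\OX,F\>)\cong u^*\<\<F$, which is not $u^\times\<\< F$: since $\R u_*\OX$ does not have coherent cohomology one may not drop the coherator (\S\ref{RHom^qc}), and $\id_Z^\times$ does not commute with $u^*$---that failure to commute is precisely the content of the Corollary. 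Correspondingly, the crucial input is not that ``derived completion equals $F$ because $R$ is complete'': as a complex of sheaves $\LL\Lambda_WF$ is concentrated on $W$ and is nothing like $F$. What the proof needs is that $F\cong\id_Z^\times\<\<F\iso\id_Z^\times\kappa_*\kappa^*\<\<F$ and $\kappa_*\kappa^*\<\<F\iso\LL\Lambda_WF$, compatibly with the completion map (\cite[3.3.1(2)]{AJL2} and \cite[0.4.1]{AJL1}); these require the properness of $p$ and the coherence of the cohomology of $F$, in addition to the completeness of $R$. Your final argument never uses properness, and without it the statement is false: for $R$ a complete DVR with uniformizer $\pi$, $Z=\ak^1_R=\spec R[t]$ and $F=\OZ$, the isomorphism \eqref{aff times} shows that $u^\times\OZ$ is the sheafification of $\R\<\<\Hom_{R[t]}(R[t][1/\pi],R[t])$, whose $\ext^1$ is the $\pi$-adic completion of $R[t]$ modulo $R[t]$, which is nonzero.
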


\begin{proof} Since $u^*\R u_*u^\times F\cong u^\times F$ 
(\S\ref{cosa:localizing-immersion}), it suffices that $\R u_*u^\times F=0$,
that is,  by~\ref{psi via RHom}, that 
$\RHqc{Z}(\R u_*\OX, F\>)=0$.

Set $W\set p^{-1}\spec (R/I)$. There is a natural triangle
\[
\RHqc{Z}(\R u_*\OX, F\>)
\to \RHqc{Z}(\OZ, F\>)\xto{\alpha\>} \RHqc{Z}(\R\vG^{}_{\!W}\OZ, F\>)
\xto{+\,}
\]
It is enough therefore to show that $\alpha$ is an isomorphism.

Let $\kappa\colon Z_{\</W}\to Z$ be the formal completion of $Z$ along $W\<$. For any $\OZ$-module
$G$ let\/ $G_{\</W}$ be the completion of $G$---an $\mathcal O_{Z_{\</W}}$-module; and let $\Lambda_W$ be the functor given objectwise by $\kappa_*G_{\</W}$. The composition of $\alpha$ with the ``Greenlees\kf-May" isomorphism
\[
\RHqc{Z}(\R\vG^{}_{\!W}\OZ, F\>)\iso \id_Z^\times\LL\Lambda_W F,
\]
given by \cite[0.3]{AJL1} is, by \emph{loc.\,cit.}, the map~
$\id_Z^\times\<\<\lambda$, where 
$\lambda\colon F\to\LL\Lambda_W F$ is the unique map 
whose composition with the canonical map
$\LL\Lambda_W F\to\Lambda_W F$ is the completion map $F\to\Lambda_W F$.
So we need $\id_Z^\times\<\<\lambda$ to be an isomorphism.
Hence, the isomorphisms $F=\id_Z^\times\<\<F\iso \id_Z^\times\< \kappa_*\kappa^*\<\<F$ in \cite[3.3.1(2)]{AJL2} (where $\id_Z^\times$
is denoted $\R Q$) and  
$
\lambda^*_*\colon\kappa_*\kappa^*\<\<F\iso\LL\Lambda_W F
$ 
in \cite[0.4.1]{AJL1} (which requires coherence of the cohomology of $F$)  
reduce the problem to showing that \emph{the natural composite map}\looseness=-1
$$
F\to\kappa_*\kappa^*\<\<F\xto{\<\lambda^*_*\>}\LL\Lambda_W F\to\Lambda_W F
$$
\emph{is the completion map.} 

\pagebreak[3]
By the description of $\lambda^*_*$ preceding \cite[0.4.1]{AJL1}, this amounts to commutativity of the border of the following natural diagram:
\[
\def\1{$F$}
\def\2{$\kappa_*\kappa^*\<\< F$}
\def\3{$\kappa_*\kappa^*\Lambda_W F$}
\def\4{$\kappa_*\kappa^*\kappa_*F_{\</W}$}
\def\5{$\kappa_*F_{\</W}$}
\def\6{$\Lambda_W F$}
 \bpic[xscale=4, yscale=1.4]

   \node(11) at (1,-1){\1};   
   \node(12) at (2,-1){\6};   
   \node(13) at (3,-1){\5};
   
   \node(21) at (1,-2){\2};   
   \node(22) at (2,-2){\3};     
   \node(23) at (3,-2){\4};  
   
    \draw[->] (11)--(12) node[above=1pt, midway, scale=.75] {$$};  
    \draw[double distance = 2pt] (12)--(13);
    
   \draw[->] (21)--(22) node[above=1pt, midway, scale=.75] {$$};  
    \draw[double distance = 2pt] (22)--(23);

     \draw[->] (11)--(21) ;
 
     \draw[->] (12)--(22) ;    
     
     \draw[<-] (2.97,-1.8)--(2.97,-1.2) ;
     \draw[->] (3.03,-1.8)--(3.03,-1.2) ;
    \epic
\]
Verification of the commutativity is left to the reader.
\end{proof}

\begin{cosa}\label{sigma,sigma}
Next we generalize Corollary~\ref{affine locimm}, replacing $u$ 
by an arbitrary flat map $f\colon X=\spec(S)\to\spec(R)=Z$ in $\sE$, corresponding to a flat
ring-homomorphism $\sigma\colon R\to S$. Lemma~\ref{L1.1.1} gives an expression for $\psi(f)$ for an \emph{arbitrary} flat $\sE$-map $f$, that in the foregoing affine case implies, as shown in Lemma~\ref{L1.1.3}, that for $M\in\D(R)$,
and $\env S\set S\otimes_R S$, $\psi(f)M$ is (naturally isomorphic to) the sheafification of the natural composite $\D(S)$-map
\begin{multline*}
\R\<\Hom_R(S,M)\iso
S\Otimes{\env S}\<(\env S\otimes_S\R\<\Hom_R(S,M))\\
\iso
S\Otimes{\env S}\<(S\otimes_R \R\<\Hom_R(S,M))
\lto\,
S\Otimes{\env S}\R\<\Hom_R(S,\>S\otimes_R M),
\end{multline*}
or, more simply, (see Proposition~\ref{R1.1.3.5}),
\[
\R\<\<\Hom_R(S,M) \to \R\<\<\Hom_R(S,S\otimes_R M)
\to 
S\Otimes{\env S}\R\<\<\Hom_R(S,S\otimes_R M).                                                                                                                                                                                                                                                                                                                                                                                                                                                                                                                                                        
\]
(The expanded notation in~\ref{L1.1.3} and~\ref{R1.1.3.5} indicates the $S$-actions involved.) 

So let $f:X\to Z$ be a 
\emph{flat} $\sE$-map, let $\delta:X\to X\times_ZX$ be
the diagonal, and let
$\pi_1^{},\pi_2^{}$  be the projections from \mbox{$X\times_ZX$} to $X$. 
There is a base-change isomorphism $\beta'=\pi_2^*f^!\iso\pi_1^! f^*\<$, 
as in~ \ref{bch}.
There is also a base-change map $\beta\colon \pi_2^*f^\times\to\pi_1^\times\<\< f^*\<$  as in Proposition~\ref{base change}\va1 (with $g=f,\; p=\pi_2,\; q=\pi_1$); 
this $\beta$ need not be an isomorphism.\va2

The next Lemma  concerns functors from $\Dqcpl(Z)$ to $\Dqcpl(X)$.

\begin{sublem}\label{L1.1.1} With preceding notation, there is an isomorphism of functors\/ $\nu\colon\LL\delta^*\pi_1^\times\<\< f^*\iso f^!$ 
such that the map\/ $\psi(f):f^\times\to f^!$ from~\textup{\ref{relation}} is the composite
\[
f^\times=\id_\sX^*\<\<f^\times\cong\LL\delta^*\pi_2^*f^\times\xto{\!\LL\delta^*\beta\>}
\LL\delta^*\pi_1^\times\<\< f^*\xto{\,\nu\,} f^!\<.
\]
\end{sublem}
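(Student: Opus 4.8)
The plan is to extract both the map $\nu$ and the asserted factorization of $\psi(f)$ from the base-change compatibility of Proposition~\ref{base change}, and then to check that $\nu$ is an isomorphism by applying Proposition~\ref{pullback1} to the diagonal.

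First I would apply Proposition~\ref{base change} to the fibre square formed by the projections $\pi_1,\pi_2\colon X\times_Z X\to X$ and the flat map $f\colon X\to Z$, with $g\set f$, $p\set\pi_2$ and $q\set\pi_1$. It gives that the square
$$
\CD
\pi_2^*f^\times @>{\pi_2^*\psi(f)}>> \pi_2^*f^!\\
@V{\beta}VV @VV{\beta'}V\\
\pi_1^\times f^* @>>{\psi(\pi_1)f^*}> \pi_1^! f^*
\endCD
$$
commutes, where $\beta$ and $\beta'$ are the two base-change maps named in \S\ref{sigma,sigma} and $\beta'$ is the base-change isomorphism~\eqref{bch}. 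Applying $\LL\delta^*$, and using the pseudofunctorial isomorphism $\LL\delta^*\pi_2^*\cong\id_\sX^*$ (valid because $\pi_1\delta=\pi_2\delta=\id_\sX$), under which the top row is carried to $f^\times\xto{\psi(f)}f^!$, one obtains a commutative square
$$
\CD
f^\times @>{\psi(f)}>> f^!\\
@V{\LL\delta^*\beta}VV @VV{\LL\delta^*\beta'}V\\
\LL\delta^*\pi_1^\times f^* @>>{\LL\delta^*(\psi(\pi_1)f^*)}> \LL\delta^*\pi_1^! f^*
\endCD
$$
whose left column is precisely the composite $f^\times=\id_\sX^*f^\times\cong\LL\delta^*\pi_2^*f^\times\xto{\LL\delta^*\beta}\LL\delta^*\pi_1^\times f^*$ of the statement, and in which $\LL\delta^*\beta'$ is an isomorphism (as $\beta'$ is). Now define $\nu\set(\LL\delta^*\beta')^{-1}\smallcirc\LL\delta^*(\psi(\pi_1)f^*)\colon\LL\delta^*\pi_1^\times f^*\to f^!$; commutativity of the last square then reads $\psi(f)=\nu\smallcirc\LL\delta^*\beta$, which is exactly the factorization asserted in the Lemma.

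It remains to show that $\nu$ is an isomorphism, and since $(\LL\delta^*\beta')^{-1}$ is one, it suffices that $\LL\delta^*(\psi(\pi_1)f^*)$ be an isomorphism on $\Dqcpl(Z)$. I would deduce this from Proposition~\ref{pullback1}, applied to the $\sE$-maps $X\xto{\,\delta\,}X\times_Z X\xto{\,\pi_1\,}X$ — whose composite $\pi_1\delta=\id_\sX$ is proper — with $F\set\mathcal O_{X\times_Z X}\in\Dqc(X\times_Z X)$ and $G\set f^*M\in\Dqcpl(X)$ for $M\in\Dqcpl(Z)$ (here $f^*M$ lies in $\Dqcpl(X)$ because $f$ is flat). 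Taking the first of the three maps there, Proposition~\ref{pullback1} gives that $\LL\delta^*\RHqc{X\times_Z X}\big(\mathcal O_{X\times_Z X},\psi(\pi_1)(f^*M)\big)$ is an isomorphism; by \S\ref{RHom^qc} the quasi-coherator unit maps identify $\RHqc{X\times_Z X}(\mathcal O_{X\times_Z X},-)$ with the identity functor on $\Dqc(X\times_Z X)$ and carry this map to $\LL\delta^*\psi(\pi_1)(f^*M)$. Hence $\LL\delta^*\psi(\pi_1)(f^*M)$ is an isomorphism for every $M\in\Dqcpl(Z)$, so $\nu$ is an isomorphism of functors $\Dqcpl(Z)\to\Dqcpl(X)$ (in particular $\LL\delta^*\pi_1^\times f^*$ carries $\Dqcpl(Z)$ into $\Dqcpl(X)$, as $f^!$ does).

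I do not expect a genuine obstacle; the substantive content is carried entirely by Propositions~\ref{base change} and~\ref{pullback1}, and the rest is bookkeeping. The points that need attention are: that $\beta$ and $\beta'$ of \S\ref{sigma,sigma} are literally the base-change maps of Proposition~\ref{base change} for the square above, so its commutative diagram applies unchanged; that $\beta'$ belongs to the family~\eqref{bch} of base-change \emph{isomorphisms}, which holds because that square is a fibre square whose bottom edge $f$ is flat; that applying $\LL\delta^*$ and invoking pseudofunctoriality of $\LL(-)^*$ really does turn the left column of the Proposition~\ref{base change} square into the canonical composite $f^\times\cong\LL\delta^*\pi_2^*f^\times$ appearing in the statement; and that, with $F=\mathcal O_{X\times_Z X}$, the map ``$\via\psi$'' in Proposition~\ref{pullback1} is identified with $\LL\delta^*\psi(\pi_1)$ via the quasi-coherator unit isomorphisms recalled in \S\ref{RHom^qc}.
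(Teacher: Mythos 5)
Your argument is correct and is essentially the paper's own proof: both apply Proposition~\ref{base change} to the square with $p=\pi_2$, $q=\pi_1$, $g=f$, apply $\LL\delta^*$ and the identification $\LL\delta^*\pi_2^*\cong\id_\sX^*$, and define $\nu$ as the composite of $\LL\delta^*\psi(\pi_1)$ with the inverses of $\LL\delta^*\beta'$ and the canonical isomorphism, with $\LL\delta^*\psi(\pi_1)$ an isomorphism by Proposition~\ref{pullback1} since $\pi_1\delta=\id_\sX$ is proper. Your extra bookkeeping (taking $F=\mathcal O_{X\times_Z X}$ and invoking the quasi-coherator identification of \S\ref{RHom^qc}, and noting $f^*M\in\Dqcpl(X)$ by flatness) just makes explicit what the paper leaves implicit.
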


\begin{proof}
Consider the diagram, where $\theta$ and $\theta'$ are the natural isomorphisms,
\[
\CD
f^\times @>\Iso>\theta>  \LL\delta^*\pi_2^*f^\times @>\LL\delta^*\beta>>
                                 \LL\delta^*\pi_1^\times\<\< f^* \\
@V\psi(f)VV @V\LL\delta^*\pi_2^*\psi(f)VV @VV\LL\delta^*\psi(\pi_1^{})V \\
f^!  @>\Iso>\theta'> \LL\delta^*\pi_2^*f^! @>>{\LL\delta^*\beta'}>
                                 \LL\delta^*\pi_1^!f^*
\endCD
\]
The left square obviously commutes, and the right square commutes by  Proposition~\ref{base change}. Since
$\pi_1\delta=\id_X$ is proper, Proposition~\ref{pullback1} guarantees that
$\LL\delta^*\psi(\pi_1^{})$ is an isomorphism, while $\LL\delta^*\beta'$
is an isomorphism since $\beta'$ is. 

The Lemma results, with $\nu\set(\theta')^{-\<1}\smallcirc(\LL\delta^*\beta')^{-\<1}
\smallcirc \LL\delta^*\psi(\pi_1^{}).$ \end{proof}

\begin{subcor}\label{R1.1.1.5}
The map\/ $\psi(f)$ in Lemma\/~\textup{\ref{L1.1.1}} factors as
\[
f^\times\! \xto{\,\eta\,} \R\>\pi_{2\>*}^{}\pi_2^*f^\times\! \xto{\R\>\pi_{2\>*}^{}\beta}
\R\>\pi_{2\>*}^{}\pi_1^\times\<\< f^*\xto{\,\eta\,} 
\R\>\pi_{2\>*}^{}\R\>\delta_*\LL\>\delta^*\pi_1^\times\<\< f^*\!\iso\! \LL\>\delta^*\pi_1^\times\<\< f^*\xto{\,\nu\,} f^!\<,
\]
where the maps labeled\/ $\eta$ are induced by units of adjunction, and the
isomorphism obtains because\/ $\pi_2\delta=\id_\sX$. 
\end{subcor}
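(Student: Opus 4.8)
The plan is to untangle $\psi(f)$ by means of Lemma~\ref{L1.1.1} and then to match the resulting expression, arrow by arrow, with the claimed factorization; the round trip ``through $\R\pi_{2*}$'' will turn out to be forced by the compatibility of the pseudofunctor structures on $(-)^*$ and $(-)_*$ with the units of adjunction.

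By Lemma~\ref{L1.1.1} one has $\psi(f)=\nu\smallcirc\LL\delta^*\beta\smallcirc\theta$, where $\theta\colon f^\times\iso\LL\delta^*\pi_2^*f^\times$ is the natural isomorphism coming from $\pi_2\delta=\id_\sX$. Since the factorization to be proved likewise ends by applying $\nu\colon\LL\delta^*\pi_1^\times f^*\iso f^!$ to a map into $\LL\delta^*\pi_1^\times f^*$, it suffices to show that $\LL\delta^*\beta\smallcirc\theta\colon f^\times\to\LL\delta^*\pi_1^\times f^*$ equals the composite $\chi$ of the first four arrows of the statement,
\[
f^\times\xto{\,\eta\,}\R\pi_{2*}\pi_2^*f^\times\xto{\,\R\pi_{2*}\beta\,}\R\pi_{2*}\pi_1^\times f^*\xto{\,\eta\,}\R\pi_{2*}\R\delta_*\LL\delta^*\pi_1^\times f^*\iso\LL\delta^*\pi_1^\times f^* ,
\]
in which the first $\eta$ is the unit of $\pi_2^*\dashv\R\pi_{2*}$, the second is $\R\pi_{2*}$ applied to the unit $\eta^{\delta}\colon\id\to\R\delta_*\LL\delta^*$, and the last isomorphism is $\R\pi_{2*}\R\delta_*\iso\R(\pi_2\delta)_*=\id$.

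To see this, apply naturality of $\eta^{\delta}$ to the map $\beta\colon\pi_2^*f^\times\to\pi_1^\times f^*$ to rewrite $(\R\pi_{2*}\eta^{\delta})\smallcirc(\R\pi_{2*}\beta)$ as $(\R\pi_{2*}\R\delta_*\LL\delta^*\beta)\smallcirc(\R\pi_{2*}\eta^{\delta})$, and then apply naturality of the isomorphism $\R\pi_{2*}\R\delta_*\iso\id$ to the map $\LL\delta^*\beta$ to move that isomorphism to the left of $\LL\delta^*\beta$. This presents $\chi$ as $\LL\delta^*\beta$ precomposed with
\[
f^\times\xto{\,\eta\,}\R\pi_{2*}\pi_2^*f^\times\xto{\,\R\pi_{2*}\eta^{\delta}\pi_2^*f^\times\,}\R\pi_{2*}\R\delta_*\LL\delta^*\pi_2^*f^\times\iso\LL\delta^*\pi_2^*f^\times ,
\]
so the whole matter reduces to the equality of this last composite with $\theta$.

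That equality is nothing but the pseudofunctoriality of the adjunction between $(-)^*$ and $(-)_*$, specialized to the factorization $X\xto{\,\delta\,}X\times_ZX\xto{\,\pi_2\,}X$ of $\id_\sX\colon$ here $\theta$ is the isomorphism $\LL(\pi_2\delta)^*f^\times\iso\LL\delta^*\pi_2^*f^\times$ induced by the composition isomorphism $\LL(\pi_2\delta)^*\iso\LL\delta^*\pi_2^*$, the final unlabelled isomorphism above is induced by the composition isomorphism $\R\pi_{2*}\R\delta_*\iso\R(\pi_2\delta)_*$, and the displayed equality is exactly the compatibility of these composition isomorphisms with the units---the same fact that was invoked for subdiagram~\circled1 in the proof of Proposition~\ref{fake unit}; see \cite[\S3.6, (3.6.7)(d)]{li}. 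I expect no substantive difficulty, the only point needing attention being the third paragraph, where one must keep careful track of which natural transformation is being composed with which functor so as to land precisely in the shape covered by the cited pseudofunctoriality statement.
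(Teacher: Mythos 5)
Your argument is correct and is essentially the paper's own proof: the paper checks the same factorization by a commutative diagram whose unlabeled squares are exactly your two naturality steps (for the unit $\id\to\R\>\delta_*\LL\>\delta^*$ applied to $\beta$, and for the isomorphism $\R\>\pi_{2\>*}^{}\R\>\delta_*\cong\id$ applied to $\LL\>\delta^*\beta$), and whose remaining subdiagram is the compatibility of units with the composition isomorphisms of $(-)^*$ and $(-)_*$, verified there by expanding and citing \cite[(3.6.2)]{li} -- the same \S3.6 pseudofunctoriality material you invoke.
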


\begin{proof}
By Lemma~\ref{L1.1.1} it suffices that the following diagram commute.
\[
\def\1{$f^\times$}
\def\2{$\R\>\pi_{2\>*}^{}\pi_2^*f^\times$}
\def\3{$\R\>\pi_{2\>*}^{}\pi_1^\times\<\< f^*$}
\def\4{$\LL\delta^*\pi_2^*f^\times$}
\def\5{$\R\>\pi_{2\>*}^{}\R\>\delta_*\LL\>\delta^*\pi_2^*f^\times$}
\def\6{$\R\>\pi_{2\>*}^{}\R\>\delta_*\LL\>\delta^*\pi_1^\times\<\< f^*$}
\def\7{$\LL\delta^*\pi_1^\times f^*$}
 \bpic[xscale=4.5, yscale=1.2]

   \node(11) at (1,-1){\1};    
   \node(12) at (2,-1){\2};   
   \node(13) at (3,-1){\3};   
 
   \node(21) at (1,-2){\4};   
   \node(22) at (2,-2){\5};  
   \node(23) at (3,-2){\6}; 
        
   \node(32) at (2,-3){\7};
   
    \draw[->] (11)--(12) node[above, midway, scale=.75] {$\eta$};  
    \draw[->] (12)--(13) node[above, midway, scale=.75] {$\R\pi_{2\>*}\beta$};  
     
   \draw[->] (21)--(22) node[above=1pt, midway, scale=.75] {$$};  
   \draw[->] (22)--(23) node[above=1pt, midway, scale=.75] {$\via\beta$};    
      
    \draw[->] (11)--(21) node[left=1pt, midway, scale=.75] {$\simeq$};    
     
    \draw[->] (12)--(22) node[left=1pt, midway, scale=.75] {$\eta$}; 
              
    \draw[->] (13)--(23) node[right=1pt, midway, scale=.75] {$\eta$}; 

    \draw[->] (21)--(32) node[below=-1pt, midway, scale=.75] 
                                        {$\LL\delta^*\beta\mkern40mu$};  
    \draw[->] (32)--(23) node[below=1pt, midway, scale=.75] {$$};  

 	   \node at (1.43,-1.51)[scale=.9]{\circled1};
  
  \epic
\]
Commutativity of the unlabeled subdiagrams is clear.

Subdiagram~\circled1
(without $f^\times$) expands as
\[
\def\1{$\id$}
\def\2{$\R\>\pi_{2\>*}^{}\pi_2^*$}
\def\4{$\LL\delta^*\pi_2^*$}
\def\5{$\R\>\pi_{2\>*}^{}\R\>\delta_*\LL\>\delta^*\pi_2^*$}
\def\8{$(\pi^{}_2\delta)_*\LL\delta^*\pi_2^*$}
\def\9{$(\pi^{}_2\delta)^*$}
\def\0{$(\pi^{}_2\delta)_*(\pi^{}_2\delta)^*$}
 \bpic[xscale=4, yscale=1.2]

   \node(11) at (1,-1){\1};    
   \node(13) at (3,-1){\2};   
  
   \node(21) at (1,-2){\9};   
   \node(22) at (2,-2){\0};  
   
   \node(31) at (1,-3){\4}; 
   \node(32) at (2,-3){\8};
   \node(33) at (3,-3){\5};
   
   \draw[->] (11)--(13) node[above, midway, scale=.75] {$\eta$};  
      
   \draw[double distance=2pt] (21)--(22) ; 
    
   \draw[double distance=2pt] (31)--(32) ;    
   \draw[->] (32)--(33) node[above, midway, scale=.75] {$\Iso$};

    \draw[double distance=2pt] (11)--(21) ;  
      
    \draw[->] (21)--(31) node[left=1pt, midway, scale=.75] {$\simeq$}; 
    \draw[->] (22)--(32) node[right=1pt, midway, scale=.75] {$\simeq$}; 
              
    \draw[->] (13)--(33) node[right=1pt, midway, scale=.75] {$\eta$}; 

    \draw[->] (11)--(22) node[above=-2pt, midway, scale=.75] 
                                        {$\mkern20mu\eta$};  

  	   \node at (2.5,-2)[scale=.9]{\circled2};
  
  \epic
\]
Commutativity of subdiagram \circled2 is given by \cite[(3.6.2)]{li}. Verification of   commutativity of the remaining two subdiagrams is left to the reader.
\end{proof}
 
\begin{subcosa}
\label{rem:affine-case}
We now concretize the preceding results in case  $X=\spec(S)$ and $Z=\spec(R)$ are affine, so that the flat map $f\colon X\to Z$  
corresponds to a flat  homomorphism $\sigma\colon R\to S$ of noetherian rings. 

First, some notation.
For a ring $P$,  $\M(P)$ will denote the category of $P$-modules.  Forgetting for the moment that $\sigma$ is flat, let  $\tau\colon R\to T$ be a flat ring-homomorphism. If
\[
\Hom_{\>\sigma\<,\tau}\colon \M(S)^\op\times \M(T)\to \M(T\otimes_R S)
\]
is the obvious functor such that  
\[
\Hom_{\>\sigma\<,\tau}(A,B)\set\Hom_R(A,B),
\] 
then, since (by flatness of $\tau$) any K-injective $T$-complex is K-injective over~$R$,  
there is a derived functor
\[
\R\<\<\Hom_{\>\sigma\<,\tau}\colon \D(S)^\op\times \D(T)\to \D(T\otimes_R S)
\]
such that, with $(F\to J_F\>)_{F\in\D(T)}$ a family of K-injective $T$-resolutions, and $E\in\D(S)$,
\[
\R\<\<\Hom_{\>\sigma\<,\tau}(E,F\>)\set\Hom_{\>\sigma\<,\tau}(E,J_F).
\] 

Set $\Hom_{\sigma}\set\Hom_{\sigma\<,\>\>\id_R}$.\va1
\end{subcosa}

Let $p^{}_1\colon T\to T\otimes_R S$ be the $R$-algebra homomorphism with $p^{}_1(t)=t\otimes 1$. There is a natural functorial isomorphism in $\D(T\otimes_R S)$:
\begin{equation}\label{ss to p}
\R\<\<\Hom_{\>\sigma\<,\>\tau}(E,\>F\>)\iso \R\<\<\Hom_{\>p^{}_1}\<\<(T\otimes_R E, F\>)
\quad(F\in\D(T)).
\end{equation}
(For this, just replace $F$ by a K-injective $T$-resolution.)

Let $p^{}_2\colon S\to T\otimes_R S$ be the $R$-algebra map with 
$p^{}_2(s)=1\otimes s$. Let $\rho_\tau\colon\D(T)\to\D(R)$ be the restriction-of-scalars functor induced by~ $\tau$; and define  $\rho_{p^{}_2}$ analogously. Then, in $\D(S)$,
\[
\R\<\<\Hom_{\>\sigma}(E, \rho_\tau F\>)=\rho_{p^{}_2}\R\<\<\Hom_{\>\sigma\<,\>\tau}(E, F\>)
\qquad(E\in\D(S),\,F\in\D(T)).
\] There results a ``multiplication" map
in $\D(T\otimes_RS)$:
\[
\mu\colon (T\otimes_R S)\otimes_S \R\<\<\Hom_{\>\sigma}(E, \rho_\tau F\>)\to\
\R\<\<\Hom_{\>\sigma\<,\>\tau}(E, F\>),
\]
and hence a natural composition in $\D(S)$
\begin{equation}\label{extend sigma}
\begin{aligned}
\R\<\<\Hom_{\>\sigma}(E, \rho_\tau F\>)&\iso 
S\Otimes{T\otimes_{\<R}\> S}\!\big((T\otimes_R S)\otimes_S \R\<\<\Hom_{\>\sigma}(E, \rho_\tau F\>)\big)\\
&\xto{S\Otimes{T\otimes_{\<R} S}\:\mu}
S\Otimes{T\otimes_{\<R}\> S}\R\<\<\Hom_{\>\sigma\<,\>\tau}(E,\>F\>).
\end{aligned}
\end{equation}

\vskip2pt
Now, assuming $\sigma$ to be flat, we derive algebraic expressions for $f^\times$ and~$f^!$. 

Application of the functor  $\R\Gamma(Z,-)=\R\<\<\Hom(\OZ,-)$ to item 1 in the proof of  Corollary~\ref{affine locimm}, 
gives $\R\<\<\Hom_Z(A^\sim\<,B^\sim)=\R\<\<\Hom_R(A,B)$. Since $(-)^{\sim_S}\colon\D(S)\to\Dqc(X)$ is an equivalence of categories \cite[p.\,230, 5.5]{BN},  it results from the canonical isomorphism (with $E\in\D(S)$,
$M\in\D(R)$ and $\sigma_{\<\<*}\colon\D(S)\to\D(R)$ the functor given by restricting scalars)
\[
\Hom_{\D(S)}\!\big(E, \R\<\<\Hom_\sigma(S,M)\big)\iso
\Hom_{\D(R)}(\sigma_*E,M)
\] 
that there is a functorial isomorphism
\begin{equation}\label{aff times}
\varrho(M)\colon \big(\R\<\<\Hom_\sigma(S,M)\big)^{\sim_S} \cong f^\times\<\< \big(M^{\sim_R}\big)
\qquad(M\in\D(R)) 
\end{equation}
such that $f_*\varrho(M)$ is the isomorphism $\zeta(M^{\sim_R})$ in Lemma~\ref{RHom}.\va1
 
Next, let $\pi_i\colon X\times_Z X\to X\ (i=1,2)$ be the projection maps, and let 
\mbox{$\delta\colon X\to X\times_Z X$} 
be the diagonal map.   Set $\env S\set S\otimes_R S$. Note that if $A\to B$ is a homomorphism of rings,  corresponding to
$g\colon\spec B\to\spec A$, and if $N\in\D(A)$, then
\begin{equation}\label{aff^*}
\LL g^*\big(N^{\sim_A}) =\big(B\Otimes{A} N\big)^{\sim_B}.
\end{equation}
This follows easily from the fact that the functor $(-)^{\sim_A}$ preserves both quasi-isomorphisms 
and K-flatness of complexes.

\begin{sublem}\label{L1.1.3}
There  is a natural functorial isomorphism of the map
\[
\psi(f)M^{\sim_R}:f^\times\< M^{\sim_R}\to f^!M^{\sim_R}
\qquad \big(M\in \D(R)\big)
\] 
with the sheafification of the natural composite\/ $\D(S)$-map
\begin{align*}
\psi(\sigma)M\colon \R\<\Hom_\sigma(S,M)
&\iso
S\Otimes{\env S}\<(\env S\otimes_S\R\<\Hom_\sigma(S,M))\\
&\iso
S\Otimes{\env S}\<(S\otimes_R \R\<\Hom_R(S,M))\\
&\,\lto\,
S\Otimes{\env S}\R\<\Hom_{\>\sigma\<,\>\sigma}(S,\>S\otimes_R M).
\end{align*}
\end{sublem}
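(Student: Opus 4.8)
The plan is to transport the scheme-theoretic description of $\psi(f)$ given in Lemma~\ref{L1.1.1} across the equivalence $(-)^{\sim_S}\colon\D(S)\iso\Dqc(X)$, using the already-established algebraic dictionary: the isomorphism $\varrho$ of \eqref{aff times} for $f^\times$, the identities \eqref{aff^*} and item~1 of the proof of Corollary~\ref{affine locimm} for the operations $\LL g^*$ and $\RHqc{}$ on sheafifications, and the formalism of $\R\<\<\Hom_{\sigma,\tau}$ set up in \ref{rem:affine-case}. Concretely, I would write $\pi_1,\pi_2$ as the maps of affine schemes corresponding to the two coprojections $R\to\env S$, $\delta$ as corresponding to the multiplication map $\env S\to S$, and then read off each of the three functors appearing in the composite of Lemma~\ref{L1.1.1}—namely $\LL\delta^*\pi_2^*f^\times$, $\LL\delta^*\beta$, and $\nu$—in $\D(S)$-terms.

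The key steps, in order: (1) Identify $\pi_2^*f^\times M^{\sim}$ with $(\env S\otimes_S\R\<\Hom_\sigma(S,M))^{\sim_{\env S}}$ via \eqref{aff^*} and \eqref{aff times}, and then $\LL\delta^*$ of that with $S\Otimes{\env S}(\env S\otimes_S\R\<\Hom_\sigma(S,M))$, giving the first line of the asserted composite $\psi(\sigma)M$ (the leading isomorphism is just the standard $N\cong S\Otimes{\env S}(\env S\otimes_S N)$). (2) Compute the base-change map $\beta\colon\pi_2^*f^\times\to\pi_1^\times f^*$ in algebraic terms. By its very definition (adjoint to $\R\pi_{1\>*}\pi_2^*f^\times\iso f^*\R\fst f^\times\to f^*$, cf. Proposition~\ref{base change}) and the identification \eqref{aff times} applied now to the flat map $\pi_1$ over the base $X=\spec S$, one finds $\pi_1^\times f^*M^{\sim}=\R\<\Hom_{p_1}(\env S\otimes_R M, -)^{\sim}\cong\R\<\Hom_{\sigma,\sigma}(S,S\otimes_R M)^{\sim_{\env S}}$ by \eqref{ss to p}, and the map $\beta$ becomes the ``extend scalars along $S\to\env S$'' map $\env S\otimes_S\R\<\Hom_\sigma(S,M)\to\R\<\Hom_{\sigma,\sigma}(S,S\otimes_R M)$ built from the multiplication map $\mu$ of \eqref{extend sigma} followed by the obvious $\R\<\Hom_\sigma(S,M)\to\R\<\Hom_\sigma(S,S\otimes_R M)$; applying $\LL\delta^*$ yields the second and third lines. (3) Identify the isomorphism $\nu\colon\LL\delta^*\pi_1^\times f^*\iso f^!$ with the identity map (or rather the canonical iso) on $S\Otimes{\env S}\R\<\Hom_{\sigma,\sigma}(S,S\otimes_R M)$—this is essentially the content of the algebraic side of Lemma~\ref{L1.1.1}, where $\nu$ was defined via $\psi(\pi_1)$ and base change for $(-)^!$; since $\pi_1$ is flit and $\pi_1\delta=\id$, both $\psi(\pi_1)$ and $\beta'$ restrict to isomorphisms after $\LL\delta^*$ that are routine to make explicit. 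Chaining (1)–(3) and invoking Lemma~\ref{L1.1.1} gives exactly the stated composite.

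The main obstacle will be step (2): pinning down $\beta$ algebraically in a way that is genuinely compatible with the abstract adjunction defining it, rather than merely plausible. One must check that under the equivalences of categories the scheme-level adjointness of Proposition~\ref{base change} matches the module-level adjointness used to characterize $\varrho$ and hence $\R\<\Hom_{\sigma,\sigma}$; this is a diagram chase through the definitions of $\beta$, the projection isomorphisms, and the counit $\R\fst f^\times\to\id$, carried out after applying $\R\pi_{1\>*}$ and using that $\pi_1\delta=\id$. Once that compatibility is recorded, the remaining verifications—naturality in $M$, that $\nu$ becomes the canonical isomorphism, and that the leading isomorphisms are the evident ones—are formal, following the same pattern as the proof of Corollary~\ref{affine locimm}. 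I would organize the write-up as: first recall the algebraic identifications of $f^\times$, $\pi_1^\times f^*$, $\LL\delta^*$; then give the one nontrivial lemma identifying $\LL\delta^*\beta$ with the map built from $\mu$; then conclude by substituting into Lemma~\ref{L1.1.1}.
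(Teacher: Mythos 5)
Your proposal follows essentially the same route as the paper's proof: identify $f^\times\<$, $\pi_1^\times$, and $\LL\delta^*$ algebraically via \eqref{aff times}, \eqref{aff^*} and \eqref{ss to p}, translate the base-change map $\beta$ into the natural $\D(\env S)$-map $S\otimes_R\R\<\Hom_\sigma(S,M)\to\R\<\Hom_{\>\sigma\<,\>\sigma}(S,S\otimes_R M)$ (the point the paper also leaves as the one real verification), and then conclude from Lemma~\ref{L1.1.1}, with the isomorphism $\nu$ absorbed into ``naturally isomorphic.'' Apart from minor slips (your displayed formula for $\pi_1^\times\<\< f^*M^{\sim}$ should read $\R\<\Hom_{\>p^{}_1}\<(\env S,S\otimes_R M)^{\sim}$, and in your description of $\beta$ the map induced by $M\to S\otimes_R M$ precedes $\mu$, not the other way around), this is the paper's argument.
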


\begin{proof}
Using  \eqref{aff times} and~\eqref{aff^*}, and the fact that sheafification is an equivalence of categories from $\D(S)$ to~
$\D(\spec S)$ (\cite[p.\,230, 5.5]{BN}), one translates the definition of the 
base\kf-change map~$\beta$ in~\ref{base change} to the commutative\kf-algebra context,  and finds that 
\[
\beta(M^{\sim_R}):\pi_2^*f^\times M^{\sim_R}\to\pi_1^\times\<\< f^*M^{\sim_R}
\] 
is naturally isomorphic to the sheafification of the natural 
composite $\D(\env S)$-map 
\[
S\otimes_R \R\<\<\Hom_\sigma(S,M) \to \R\<\<\Hom_{\>\sigma\<,\>\sigma}(S,S\otimes_R M)\iso\R\<\<\Hom_{\>p^{}_1}\<\<({\env S},S\otimes_R M)
\]
where the isomorphism comes from~\eqref{ss to p} (with $T=S$).

Lemma~\ref{L1.1.1} gives that $\psi(f)$ is naturally isomorphic to
the composite  
\[
f^\times\cong\LL\delta^*\pi_2^*f^\times \xto{{\LL\delta^*\beta}}
\LL\delta^*\pi_1^\times\<\< f^*\<,
\]
whence the conclusion.
\end{proof}

Here is a neater description of $\psi(\sigma)M$---and hence of $\psi(f)M^{\sim_R}$.

\begin{subprop}\label{R1.1.3.5}
The map\/ $\psi(\sigma)M$ in~\/\textup{~\ref{L1.1.3}\kf} factors as\va{-2}
\[\R\<\<\Hom_\sigma(S,M) 
\xto{\,\vartheta\,}\varpi
\R\<\<\Hom_\sigma(S,S\otimes_R M)
\xto{\!\eqref{extend sigma}\>} 
S\Otimes{\env S}\R\<\<\Hom_{\>\sigma\<,\>\sigma}(S,S\otimes_R M),                                                                                                                                                                                                                                                                                                                                                                                                                                                                                                                                                       
\]
where $\vartheta$ is  induced by the natural\/ $\D(R)$-map\/ 
$M\to S\otimes_R M\<$.
\end{subprop}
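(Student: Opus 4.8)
The plan is to recognize both $\psi(\sigma)M$ and the composite asserted in the proposition as $S\Otimes{\env S}(-)$ applied to appropriate maps out of $\env S\otimes_S\R\<\<\Hom_\sigma(S,M)$, and thereby to reduce everything to comparing those two maps. First I would use that the canonical isomorphism $N\iso S\Otimes{\env S}(\env S\otimes_S N)$ for $N\in\D(S)$ is natural in $N$. By its very definition, the instance of~\eqref{extend sigma} occurring in~\ref{R1.1.3.5} is $S\Otimes{\env S}$ of the multiplication map $\mu\colon\env S\otimes_S\R\<\<\Hom_\sigma(S,S\otimes_R M)\to\R\<\<\Hom_{\sigma,\sigma}(S,S\otimes_R M)$ of~\ref{rem:affine-case}, precomposed with that isomorphism taken at $N=\R\<\<\Hom_\sigma(S,S\otimes_R M)$; and naturality lets one pull $\vartheta$ inside, so that the composite of~\ref{R1.1.3.5} equals the isomorphism $\R\<\<\Hom_\sigma(S,M)\iso S\Otimes{\env S}(\env S\otimes_S\R\<\<\Hom_\sigma(S,M))$ followed by $S\Otimes{\env S}$ of
\[
\env S\otimes_S\R\<\<\Hom_\sigma(S,M)\xto{\,\env S\otimes_S\vartheta\,}\env S\otimes_S\R\<\<\Hom_\sigma(S,S\otimes_R M)\xto{\,\mu\,}\R\<\<\Hom_{\sigma,\sigma}(S,S\otimes_R M).
\]
On the other hand, the formula for $\psi(\sigma)M$ in~\ref{L1.1.3} is, by inspection, that same isomorphism followed by $S\Otimes{\env S}$ of
\[
\env S\otimes_S\R\<\<\Hom_\sigma(S,M)\iso S\otimes_R\R\<\<\Hom_R(S,M)\lto\R\<\<\Hom_{\sigma,\sigma}(S,S\otimes_R M),
\]
whose first arrow collapses the copy of $S$ inside $\env S=S\otimes_R S$ not used in forming $\env S\otimes_S\R\<\<\Hom_\sigma(S,M)$, and whose second arrow is the natural ``tensor-and-evaluate'' map. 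So it suffices to show that these two maps $\env S\otimes_S\R\<\<\Hom_\sigma(S,M)\to\R\<\<\Hom_{\sigma,\sigma}(S,S\otimes_R M)$ coincide.

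To do that I would pass to chain-level representatives---replace $M$ by a K-injective complex of $R$-modules, and, since $\sigma$ is flat, take $S\otimes_R M$ (which then represents $S\Otimes R M$) together with a K-injective $S$-resolution of it---so that all objects in sight are honest complexes and both composites are $\env S$-linear in each degree. It is then enough to evaluate on a generator $(a\otimes b)\otimes\varphi$ with $a,b\in S$ and $\varphi\in\Hom_R(S,M)$. The second composite carries it to $a\otimes\bigl(x\mapsto\varphi(bx)\bigr)$ and then to the $R$-linear map $x\mapsto a\otimes\varphi(bx)$. The first composite applies $\vartheta$ to get $(a\otimes b)\otimes\bigl(x\mapsto 1\otimes\varphi(x)\bigr)$, and then $\mu$ yields the $\env S$-translate of $x\mapsto 1\otimes\varphi(x)$ by $a\otimes b$; unwinding the $\env S$-module structure on $\R\<\<\Hom_{\sigma,\sigma}(S,S\otimes_R M)$---one factor of $\env S$ acting by postcomposition through the $S$ in $S\otimes_R M$, the other by precomposition through the source copy of $S$---this translate is once more $x\mapsto a\otimes\varphi(bx)$. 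Hence the two maps agree, and the proposition follows.

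The computation above is short, so the one point that really demands care is keeping straight the several $S$-module structures in play: the one on $\env S$ used to form $\env S\otimes_S(-)$, the one used for the outer $S\Otimes{\env S}(-)$, and the pair of pre-/postcomposition actions that make $\R\<\<\Hom_{\sigma,\sigma}(S,S\otimes_R M)$ a module over $\env S$. These are precisely what the expanded notation of~\ref{L1.1.3} and~\ref{R1.1.3.5} is designed to record. One should also check that the collapsing isomorphism $\env S\otimes_S\R\<\<\Hom_\sigma(S,M)\iso S\otimes_R\R\<\<\Hom_R(S,M)$ used in~\ref{L1.1.3} is compatible, via~\eqref{ss to p}, with the identifications built into~\eqref{extend sigma}, so that no transposition of the two tensor factors of $\env S$ slips in unnoticed; granting that, the verification is entirely routine.
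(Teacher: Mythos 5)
Your proof is correct, but it takes a different organizational route from the paper's. The paper's own proof leans on Corollary~\ref{R1.1.1.5}: one desheafifies the factorization $f^\times\to\R\pi_{2*}\pi_2^*f^\times\to\R\pi_{2*}\pi_1^\times f^*\to\LL\delta^*\pi_1^\times f^*\xto{\nu}f^!$, observes (using the description of $\beta$ in the proof of~\ref{L1.1.3}) that the first two maps desheafify to the composite $\R\Hom_\sigma(S,M)\to S\otimes_R\R\Hom_\sigma(S,M)\to\R\Hom_\sigma(S,S\otimes_R M)$, which is exactly $\vartheta$, while the remaining maps desheafify to~\eqref{extend sigma}; so the factorization is inherited from the scheme-level statement, and the only algebraic input is the identification of $\vartheta$. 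You never invoke~\ref{R1.1.1.5}: you take the algebraic composite of~\ref{L1.1.3} as given, note that both it and $\eqref{extend sigma}\circ\vartheta$ are the canonical isomorphism $N\iso S\Otimes{\env S}(\env S\otimes_S N)$ followed by $S\Otimes{\env S}$ of a map of $\env S$-complexes $\env S\otimes_S\R\Hom_\sigma(S,M)\to\R\Hom_{\>\sigma\<,\>\sigma}(S,S\otimes_R M)$, and verify the equality of those two $\env S$-maps by a chain-level computation with $M$ K-injective over $R$ and a K-injective $S$-resolution of $S\otimes_R M$. That computation is right: with the first factor of $\env S$ acting by postcomposition (through the $S$ in $S\otimes_R M$) and the second by precomposition, both maps send $(a\otimes b)\otimes\varphi$ to $x\mapsto a\otimes\varphi(bx)$, and functoriality of $S\Otimes{\env S}(-)$ finishes the argument. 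So your route is more elementary and self-contained given~\ref{L1.1.3}, at the cost of the structure bookkeeping you rightly flag, whereas the paper's route reuses~\ref{R1.1.1.5} so that essentially nothing remains to compute. One small wording slip: the collapsing isomorphism $\env S\otimes_S\R\Hom_\sigma(S,M)\iso S\otimes_R\R\Hom_R(S,M)$ absorbs the copy of $S$ along which the tensor is formed (the $p_2$-copy), not the one ``not used''; your element formula $(a\otimes b)\otimes\varphi\mapsto a\otimes\varphi(b\,\cdot)$ is nonetheless the correct one, so the slip is harmless.
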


\begin{proof} 
Note that $\vartheta$ is the natural composite $\D(S)$-map\va{-2}
\[
\R\<\<\Hom_\sigma(S,M) \to S\otimes_R^{}\R\<\<\Hom_\sigma(S,M)
\to
\R\<\<\Hom_\sigma(S,S\otimes_R M),
\]
recall the description in the proof of ~\ref{L1.1.3} of the map $\beta$, refer to the factorization of\/ $\psi(f)M^{\sim_R}$ coming from\/~\textup{\ref{R1.1.1.5}}, and fill in the details. 
\end{proof}

From \ref{R1.1.3.5} and~\eqref{eta from psi.1} it follows easily that:

\begin{subcor} For any $N\in \D(S)$, the map $\eta(N^{\sim_S})$ from 
\textup{\ref{fake unit}} sheafifies
the natural composite $\D(S)$-map 
\begin{align*}
N\xto{\vartheta'}\Hom_\sigma(S,S\otimes_R N) 
&\lto\R\<\<\Hom_\sigma(S,S\otimes_R N)\\
&\xto{\!\eqref{extend sigma}\>} 
S\Otimes{\env S}\R\<\<\Hom_{\>\sigma\<,\>\sigma}(S,S\otimes_R N),                                                                                                                                                                                                                                                                                                                                                                                                                                                                                                                                                       
\end{align*}
where $\vartheta'$ takes $n\in N$ to the map $s\mapsto s\otimes n$. 
\hfill$\square$ 
\end{subcor}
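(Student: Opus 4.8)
The plan is to read the formula off the ``fake unit'' identity \eqref{eta from psi.1}, using the algebraic descriptions of $f^\times\<$ and $\psi(f)$ obtained in Lemma~\ref{L1.1.3} and Proposition~\ref{R1.1.3.5}. Write $f\colon X=\spec S\to\spec R=Z$ for the $\sE$-map attached to $\sigma$. First I would evaluate \eqref{eta from psi.1} at $E=N^{\sim_S}$: it exhibits $\eta(N^{\sim_S})$ as the composite
\[
N^{\sim_S}\xto{\,\eta^{}_{\>2}(N^{\sim_S})\,}f^\times\R\fst N^{\sim_S}\xto{\,\psi(f)(\R\fst N^{\sim_S})\,}f^!\R\fst N^{\sim_S},
\]
with $\eta^{}_{\>2}$ the unit of $f^\times\!\dashv\R\fst$. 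For the affine map $f$, $\R\fst$ is sheafified restriction of scalars $\sigma_{\<\<*}$, so $\R\fst N^{\sim_S}=(\sigma_{\<\<*}N)^{\sim_R}$ and the second arrow is $\psi(f)\big((\sigma_{\<\<*}N)^{\sim_R}\big)$.

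Next I would pull both arrows back to $\D(S)$. By Lemma~\ref{L1.1.3} and Proposition~\ref{R1.1.3.5}, taken with $M=\sigma_{\<\<*}N$, the second arrow is the sheafification of
\[
\R\Hom_\sigma(S,\sigma_{\<\<*}N)\xto{\,\vartheta\,}\R\Hom_\sigma(S,S\otimes_R N)\xto{\!\eqref{extend sigma}\>}S\Otimes{\env S}\R\Hom_{\sigma,\sigma}(S,S\otimes_R N),
\]
$\vartheta$ being induced by the canonical $\D(R)$-map $\sigma_{\<\<*}N\to S\otimes_R N$. For the first arrow, the isomorphism $\varrho$ of \eqref{aff times} is by construction the sheafification of the $\Hom$-set bijection realizing the adjunction $f^\times\!\dashv\R\fst$ on affine models; hence $\varrho$ intertwines that adjunction with the algebraic one $\sigma_{\<\<*}\!\dashv\R\Hom_\sigma(S,-)$, and so carries $\eta^{}_{\>2}(N^{\sim_S})$ to the sheafification of the unit $N\to\R\Hom_\sigma(S,\sigma_{\<\<*}N)$ of the latter, i.e.\ of the map sending a cycle $n$ to $(s\mapsto sn)$. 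Composing, $\eta(N^{\sim_S})$ is the sheafification of
\[
N\lto\R\Hom_\sigma(S,\sigma_{\<\<*}N)\xto{\,\vartheta\,}\R\Hom_\sigma(S,S\otimes_R N)\xto{\!\eqref{extend sigma}\>}S\Otimes{\env S}\R\Hom_{\sigma,\sigma}(S,S\otimes_R N),
\]
whose first two legs compose to the $\D(S)$-map represented on cycles by $n\mapsto(s\mapsto 1\otimes sn)$.

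It then remains to match this with the composite in the statement, whose first leg is $\vartheta'$, namely $n\mapsto(s\mapsto s\otimes n)$, followed by the localization $\Hom_\sigma\to\R\Hom_\sigma$ and then \eqref{extend sigma}. The cycle representatives $(s\mapsto 1\otimes sn)$ and $(s\mapsto s\otimes n)$ need not agree in $\Hom_R(S,S\otimes_R N)$, but they do have the same image after \eqref{extend sigma}: one computes that \eqref{extend sigma} sends a cycle $\phi$ to $1\otimes_{\env S}\phi$ in $S\Otimes{\env S}\R\Hom_{\sigma,\sigma}(S,S\otimes_R N)$, an object on which the two $S$-actions on $\R\Hom_{\sigma,\sigma}(S,S\otimes_R N)$ coming from the two inclusions $S\to\env S$ are coequalized; and since in $S\otimes_R N=\env S\otimes_S N$ one has $s\otimes n-1\otimes sn=\big((s\otimes1)-(1\otimes s)\big)\cdot\big((1\otimes1)\otimes_S n\big)$ with $(s\otimes1)-(1\otimes s)\in\ker(\env S\to S)$, a short manipulation at the level of cycle representatives gives the desired equality. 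I expect the one genuinely non-formal point to be precisely this bookkeeping---keeping straight the several commuting $S$-module structures on $S\otimes_R N$, and the compatibility of $\varrho$ with the two adjunctions---but it is mild, and once it is settled the corollary follows from \eqref{eta from psi.1} and the affine computations of Lemma~\ref{L1.1.3} and Proposition~\ref{R1.1.3.5}, as the remark preceding the statement indicates.
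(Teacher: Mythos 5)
Your overall route is the paper's: the corollary is meant to fall out of \eqref{eta from psi.1} combined with \ref{R1.1.3.5} (the paper offers no further argument), and your identification of $\eta^{}_{\>2}$ with the algebraic unit $n\mapsto(s\mapsto sn)$ via \eqref{aff times}, and of $\psi(f)\big((\sigma_{\<\<*}N)^{\sim_R}\big)$ with $\eqref{extend sigma}\circ\vartheta$, is exactly the intended reduction; as you say, the resulting composite before \eqref{extend sigma} is $n\mapsto(s\mapsto 1\otimes sn)$.

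The gap is in your final reconciliation. You claim that $(s\mapsto 1\otimes sn)$ and $(s\mapsto s\otimes n)$ have the same image under \eqref{extend sigma} because $s\otimes n-1\otimes sn\in\ker(\env S\to S)\cdot(S\otimes_R N)$. This conflates two different pairs of $S$-actions: the functor $S\Otimes{\env S}(-)$ occurring in \eqref{extend sigma} coequalizes the two actions on $\R\Hom_{\sigma,\sigma}(S,S\otimes_R N)$ coming from the source copy of $S$ and from the $T(=S)$-structure of the target $S\otimes_R N$ --- which, when \ref{R1.1.3.5} is applied with $M=\sigma_{\<\<*}N$, is necessarily the action through the first factor --- whereas your displayed relation concerns the two actions on the inner module $S\otimes_R N$ itself (first factor versus the action through $N$), which are not identified by that tensor. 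Working with cycle representatives modulo $\ker(\env S\to S)$ would in any case only control the underived tensor, not a map in $\D(S)$. In fact, with the first-factor convention the two composites genuinely differ: take $\sigma\colon k\to S=k\times k$, $N=S$, $n=e_2$; then $S$ is projective over $\env S$, so $S\Otimes{\env S}\Hom_{\sigma,\sigma}(S,S\otimes_k N)$ is $eH$ with $e=e_1\otimes e_1+e_2\otimes e_2$ and $H=\Hom_k(S,S\otimes_k S)$, and one computes that $e\cdot(s\mapsto 1\otimes se_2)$ sends $e_1\mapsto 0$, $e_2\mapsto e_2\otimes e_2$, while $e\cdot(s\mapsto s\otimes e_2)$ sends $e_1\mapsto e_1\otimes e_2$, $e_2\mapsto e_2\otimes e_2$; these are distinct module maps, so equality in $\D(S)$ fails. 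Thus the ``short manipulation'' you defer cannot be carried out as proposed: the statement only comes out right if one keeps careful track of which of the several commuting $S$-structures on $S\otimes_R N$ is the one entering $\Hom_{\sigma,\sigma}$ and \eqref{extend sigma} (with the structure through $N$, the formula $s\mapsto s\otimes n$ matches; with the structure through the first factor, the correct formula is the map $s\mapsto 1\otimes sn$ that your derivation actually produces). That bookkeeping is precisely where the content of the corollary lies, and it needs to be settled, not smoothed over by the ideal-theoretic identity you invoke.
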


Using Proposition~\ref{Gampsi}, we now develop more information about the above map $\psi(\sigma)M$ when $\sigma\colon k\to S$ is an 
essentially-finite\kf-type algebra over a field $k$, and $M= k$. 

For any $\fp\in\spec S$,
let\/ $I(\fp)$ be the injective hull of the residue field $\kappa(\fp)\set S_\fp/\fp S_\fp\>$.
Let $D^\sigma\in\D(S)$ be a \emph{normalized residual complex,} thus a complex
of the form \va{-1}
\[
D^\sigma:=\cdots0\to I^{-n}\to I^{-n+1}\to\cdots\to I\>^0 \to 0\cdots
\]
where for each integer $m$, $I^{-m}$ is the direct sum of the $I(\fp)$ as 
$\fp$ runs through the primes such that $S/\fp$ has dimension $m$. 
The sheafification of $D^\sigma$ is $f^!k,$ where $f\set\spec\sigma$ and where we identify $k$ with the structure sheaf of $\spec k$, see \cite[Chapter~VI, \S1]{RD}.

\begin{prop}\label{affine/k} Under the preceding circumstances, 
there exists a split exact sequence of\/ $S$-modules\va{-1}
\[
0\lto\bigoplus_{\makebox[5pt]{$\sst \fp\textup{ nonmaximal}$}}J(\fp) \lto
\Hom_\sigma(S,k) \xto{\,\psi^0\,} I\>^0\lto 0,
\]
such that for each nonmaximal prime\/ $\fp,$ $J(\fp)$ is a direct sum of uncountably many copies 
of~\/$I(\fp),$ and
in\/ $\D(S),$ $\psi(\sigma)k$ is the composition\va{-2} 
\[
\R\<\Hom_\sigma(S,k)=\Hom_\sigma(S,k)\xto{\psi^0} I\>^0
\hookrightarrow\D^\sigma.
\]
\end{prop}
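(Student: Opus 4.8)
The plan is to make $\psi(\sigma)k$ completely explicit via the algebraic model already at hand and then squeeze everything out of Proposition~\ref{Gampsi}. \emph{Step 1 (identifications).} By Lemma~\ref{L1.1.3} and \eqref{aff times}, $\psi(f)k$ is the sheafification of the $\D(S)$-map $\psi(\sigma)k$ of Proposition~\ref{R1.1.3.5}, with source $\R\Hom_\sigma(S,k)$ and target $S\Otimes{\env S}\R\Hom_{\sigma,\sigma}(S,S)$. Since $k$ is a field it is injective over itself, so the source is just $\Hom_k(S,k)$ in degree $0$; this is an \emph{injective} $S$-module because $\Hom_S(-,\Hom_k(S,k))\cong\Hom_k(-,k)$ is exact, so by Matlis' theorem $\Hom_k(S,k)\cong\bigoplus_\fp I(\fp)^{(\mu_\fp)}$ for suitable cardinals $\mu_\fp$. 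The target sheafifies to $f^!k$, which by hypothesis also sheafifies $D^\sigma$, and since sheafification is an equivalence $\D(S)\iso\Dqc(\spec S)$ \cite[p.\,230, 5.5]{BN}, there is a canonical isomorphism $S\Otimes{\env S}\R\Hom_{\sigma,\sigma}(S,S)\cong D^\sigma$ in $\D(S)$, under which $\psi(\sigma)k$ becomes a map $\Hom_k(S,k)\to D^\sigma$. Note $D^\sigma$ is a bounded complex of injective $S$-modules, hence K-injective.

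\emph{Step 2 (a module map, and Proposition~\ref{Gampsi}).} As $D^\sigma$ is K-injective with $D^m=0$ for $m>0$, one has $\Hom_{\D(S)}(\Hom_k(S,k),D^\sigma)=\operatorname{coker}\big(\Hom_S(\Hom_k(S,k),I^{-1})\to\Hom_S(\Hom_k(S,k),I^0)\big)$, so $\psi(\sigma)k$ is represented by a module map $\psi^0\colon\Hom_k(S,k)\to I^0$ followed by $I^0=Z^0(D^\sigma)\hookrightarrow D^\sigma$ --- this is exactly the composite in the statement --- and it remains to analyse $\psi^0$ (a change of representative alters it by a map through $I^{-1}$, harmless below). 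Now apply Proposition~\ref{Gampsi} with $W:=W_0$, the set of primes $\fp$ with $[\kappa(\fp):k]<\infty$; equivalently, as $S$ is essentially of finite type over a field, the primes with $S/\fp$ of dimension $0$, so that $I^0=\bigoplus_{\fp\in W_0}I(\fp)$, each such $\fp$ is maximal with $S/\fp=\kappa(\fp)$ finite over $k$, hence $\{\fp\}$ is closed and $f|_{\{\fp\}}$ is proper, so $W_0$ meets the hypothesis of \ref{Gampsi}. Since $\operatorname{Ass}_S I(\fp)=\{\fp\}$, every nonzero element of $I(\fp)$ has support $V(\fp)$, whence $\vG^{}_{\!W_0}I(\fp)=I(\fp)$ if $\fp\in W_0$ and $=0$ otherwise; as $\Hom_k(S,k)$ and $D^\sigma$ are K-injective, $\R\vG^{}_{\!W_0}$ is computed termwise on them, giving $\R\vG^{}_{\!W_0}D^\sigma=I^0$ in degree $0$ (for $m\ge1$, $I^{-m}$ involves no prime of $W_0$), $\R\vG^{}_{\!W_0}\Hom_k(S,k)=\bigoplus_{\fp\in W_0}I(\fp)^{(\mu_\fp)}$, and $\R\vG^{}_{\!W_0}(\psi(\sigma)k)=\vG^{}_{\!W_0}\psi^0$. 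Thus \ref{Gampsi} yields an isomorphism $\vG^{}_{\!W_0}\psi^0\colon\bigoplus_{\fp\in W_0}I(\fp)^{(\mu_\fp)}\iso I^0$; comparing the number of copies of each $I(\fp)$ forces $\mu_\fp=1$ for $\fp\in W_0$, so $\vG^{}_{\!W_0}\Hom_k(S,k)=I^0$ and $\psi^0$ carries this submodule isomorphically onto $I^0$. Hence $\psi^0$ is surjective, the inverse of that isomorphism is a section, so $0\to\ker\psi^0\to\Hom_k(S,k)\xto{\psi^0}I^0\to0$ splits, and $\ker\psi^0$, being a complement of $I^0=\vG^{}_{\!W_0}\Hom_k(S,k)$ in $\Hom_k(S,k)$, is isomorphic to the complementary summand $\bigoplus_{\fp\notin W_0}I(\fp)^{(\mu_\fp)}$; put $J(\fp):=I(\fp)^{(\mu_\fp)}$ for $\fp\notin W_0$ (the ``nonmaximal'' $\fp$).

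\emph{Step 3 (the kernel is huge).} For $\fp\notin W_0$ one has $\operatorname{trdeg}_k\kappa(\fp)\ge1$, hence $\dim_k\kappa(\fp)\ge\aleph_0$. Applying $\Hom_S(\kappa(\fp),-)$ to $\Hom_k(S,k)$: on one side $\Hom_S(\kappa(\fp),\Hom_k(S,k))\cong\Hom_k(\kappa(\fp),k)$, of $\kappa(\fp)$-dimension $\dim_k\Hom_k(\kappa(\fp),k)=|k|^{\dim_k\kappa(\fp)}\ge2^{\aleph_0}$ by the Erd\H os--Kaplansky theorem, hence uncountable; on the other side, since $\operatorname{Ass}_S\kappa(\fp)=\{\fp\}$ we get $\Hom_S(\kappa(\fp),I(\mathfrak q))=0$ for $\mathfrak q\ne\fp$ and $=\kappa(\fp)$ (the socle) for $\mathfrak q=\fp$, so $\Hom_S(\kappa(\fp),\Hom_k(S,k))\cong\kappa(\fp)^{(\mu_\fp)}$, of dimension $\mu_\fp$ over $\kappa(\fp)$. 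Therefore $\mu_\fp=|k|^{\dim_k\kappa(\fp)}$ is uncountable, which completes the proof.

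\emph{Main obstacle.} The substantive point is Step 2: realizing $\psi(\sigma)k$ as a chain map into the residual complex and recognizing that $\R\vG^{}_{\!W_0}$ collapses $D^\sigma$ onto its top term $I^0$ and $\Hom_k(S,k)$ onto its $W_0$-isotypic part, so that Proposition~\ref{Gampsi} directly forces $\psi^0$ to be a split epimorphism onto $I^0$ with the stated complement. (One should also keep in mind that for essentially-finite-type $S$ the phrases ``dimension of $S/\fp$'' and ``(non)maximal $\fp$'' are to be read through transcendence degree, which is what makes $D^\sigma$ sheafify to $f^!k$.) Granting Step 2, the splitting, the identification of $\ker\psi^0$, and the Erd\H os--Kaplansky count are routine.
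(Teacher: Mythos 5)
Your Steps 1 and 2 are essentially the paper's own argument, made a little more explicit: represent $\psi(\sigma)k$ by a module map $\psi^0\colon\Hom_\sigma(S,k)\to I^{\>0}$ (legitimate since $D^\sigma$ is a bounded complex of injectives), then apply Proposition~\ref{Gampsi} through $\R\vG^{}_{\!W_0}$, where $W_0$ is the set of points with residue field finite over $k$ (each a closed point with proper restriction of $f$), compute $\R\vG^{}_{\!W_0}$ termwise on these K-injective complexes, and conclude that $\psi^0$ carries $\Gamma_{W_0}\Hom_\sigma(S,k)$ isomorphically onto $I^{\>0}$, giving the split exact sequence. The paper gets $\mu(\fp)=1$ at the $W_0$-points by a one-line adjunction-plus-localization computation, whereas you deduce it from Matlis uniqueness applied to $\vG^{}_{\!W_0}\psi^0$; either is fine, and your reading of ``(non)maximal'' via transcendence degree is the one consistent with $\widetilde{D^\sigma}=f^!k$.

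The genuine gap is Step 3, where the paper cites \cite[Theorem 1.11]{Nm3} and you substitute a direct count. Your count rests on two false claims: that $\Hom_S(\kappa(\fp),I(\mathfrak q))=0$ for all $\mathfrak q\ne\fp$, and that $\Hom_S(\kappa(\fp),-)$ passes through the infinite direct sum decomposition of $\Hom_\sigma(S,k)$. For nonmaximal $\fp$ the module $\kappa(\fp)$ is not finitely generated over $S$, so the second claim fails; and the first fails whenever $\fp\subsetneq\mathfrak q$: e.g.\ for $S=k[x]_{(x)}$, $\fp=(0)$, $\mathfrak q=(x)$, the quotient map $k(x)\to k(x)/S\cong I(\mathfrak q)$ is a nonzero $S$-homomorphism. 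Consequently $\Hom_S(\kappa(\fp),\Hom_\sigma(S,k))\cong\Hom_k(\kappa(\fp),k)$ is \emph{not} $\kappa(\fp)^{(\mu_\fp)}$; it absorbs contributions from all $I(\mathfrak q)$ with $\mathfrak q\supseteq\fp$, and no bound on $\mu_\fp$ follows. The correct multiplicity is obtained by localizing first,
\[
\mu_\fp=\dim_{\kappa(\fp)}\Hom_{S_\fp}\!\big(\kappa(\fp),\Hom_\sigma(S,k)_\fp\big)
\cong\dim_{\kappa(\fp)}\!\big(\Hom_k(S/\fp,k)\otimes_{S/\fp}\kappa(\fp)\big),
\]
i.e.\ the torsion-free rank of $\Hom_k(S/\fp,k)$ over the domain $S/\fp$, and proving this is at least $|k|^{\aleph_0}$ is precisely the nontrivial content of \cite{Nm3}; it is not an Erd\H{o}s--Kaplansky one-liner. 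Indeed your asserted value $\mu_\fp=|k|^{\dim_k\kappa(\fp)}$ contradicts the equality $\mu_\fp=|k|^{\aleph_0}$ proved there for $S$ finitely generated over $k$: for $k=\mathbb{C}$ one has $|k|^{\aleph_0}=2^{\aleph_0}$ while $|k|^{\dim_k\kappa(\fp)}=2^{\,2^{\aleph_0}}$. So the splitting and the identification of $\ker\psi^0$ stand, but the uncountability of the multiplicities needs either the citation the paper uses or a genuinely different (localized) counting argument.
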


\begin{proof} 
Since   $\Hom_{\sigma}(S,k)$ is an injective $S$-module,  there is a decomposition
\[
\Hom_{\sigma}(S,k) \cong \bigoplus_{\fp\>\in\>\spec S} I(\fp)^{\mu(\fp)}
\]
\vskip-2pt
\noindent where, $\sigma_\fp$ being the natural composite map
$k\xto{\sigma}S\twoheadrightarrow S/\fp$, $\mu(\fp)$ is the dimension of the $\kappa(\fp)$-vector space
\begin{align*}
\Hom_{S_\fp}\!\<\big(\kappa(\fp),\Hom_{k}(S,k)_\fp\big)
&=\Hom_S\!\big(S/\fp, \Hom_{\sigma}(S,k)\big)\otimes_S S_\fp\\
&\cong \Hom_{\sigma_\fp}(S/\fp,k)\otimes_{S/\fp}\kappa(\fp).
\end{align*}

In particular, if $\fp$ is maximal (so that $S/\fp=\kappa(\fp)$) then $\mu(\fp)=1$.
Thus $\Hom_{\sigma}(S,k)$ has a direct summand $J\>^0$ isomorphic to $I\>^0$. 
(This $J\>^0$ does not depend on the foregoing decomposition:
it consists of all $h\in\Hom_{\sigma}(S,k)$ such that the $S$-submodule $Sh$ has finite length.)

Now since $D^\sigma$ is a bounded injective complex, the $\D(S)$-map
$\psi(\sigma)$ is represented by an ordinary map of $S$-complexes 
$\Hom_{\sigma}(S,k)\to D^\sigma\<$, that is, by a map of $S$-modules
$\psi^0\colon \Hom_{\sigma}(S,k)\to I\>^0$.  By~\ref{L1.1.3}, the sheafification 
of~$\psi(\sigma)$ is 
$\psi(f)k\colon f^\times k\to f^!k$, and hence Proposition~\ref{Gampsi} implies that  $\psi^0$~maps~$J\>^0$ isomorphically onto $I\>^0$. Thus $\psi^0$ has a right inverse, unique up to
automorphisms of $I\>^0$; and $\Hom_{\sigma}(S,k)$ is the direct sum of $J\>^0$ and~ 
$\ker(\psi^0)$, whence
\[
\ker(\psi^0)\cong \bigoplus_{\makebox[7pt]{$\sst \fp\textup{ nonmaximal}$}}I(\fp)^{\mu(\fp)}.
\]
\vskip2pt
\noindent Last, in \cite[Theorem 1.11]{Nm3} it is shown that for nonmaximal~$\fp$,
\[
\mu_\fp=\dim_{\kappa(\fp)}\!\big(\< \Hom_{\sigma_\fp}\<(S/\fp,k)\otimes_{S/\fp}\kappa(\fp)\big)\ge|\#k|^{\aleph_0}, 
\]
with equality if $S$ is finitely generated over $k$. 
\end{proof}
\end{cosa}

\section{Applications}

\begin{cosa} (Reduction Theorems.) At least for flat maps, $\psi\colon(-)^\times\to(-)^!$ can be used to prove one of the main
results in \cite{AILN}, namely Theorem 4.6 (for which only a hint of a proof is given there). With notation as in \S\ref{sigma,sigma},  and again, $\env S\set S\otimes_R S$, that Theorem~4.6 asserts the~existence of a complex $D^\sigma\in\D(S)$, depending only on $\sigma\<$, and
for all $\sigma$-perfect $M\in\D(S)$ (i.e., $M$ is isomorphic in $\D(R)$ to a bounded complex of flat $R$-modules, the cohomology modules of $M$ are all finitely generated over $S$, and all but finitely many of them vanish), and all $N\in\D(S)$, a functorial $\D(S)$-isomorphism\va4 
\begin{equation}\label{AILN4.6}
\boxed{\R\<\<\Hom_S(M, D^\sigma)\Otimes{S} N \cong S\Otimes{S^e}\R\<\<\Hom_{\>\sigma\<,\>\sigma}(M,N).}
\end{equation}
In particular,
\begin{equation*}\label{affreldual}
D^\sigma\cong S\Otimes{S^e}\R\<\<\Hom_{\>\sigma\<,\>\sigma}(S,S).\tag*{(\ref{AILN4.6})$'$}
\end{equation*}
This explicit description is noteworthy in that  the sheafification \mbox{$\widetilde{D^\sigma}$}
is a \emph{relative dualizing complex} $f^!\OY$, where 
$f\set\spec\sigma\colon \spec S\to \spec R$ (see \cite[Example 2.3.2]{AIL}
or Lemma~\ref{L1.1.3} above); and otherwise-known definitions of $f^!$ involve choices, of which $f^!$ must be proved independent.\va2

The present proof will be based on the isomorphism in Lemma \eqref{Gmap} below,%
\footnote{After \cite{AILN} appeared, Leo Alonso and Ana Jerem\'ias informed us that Lemma  \eqref{Gmap} is an instance of \cite[p.\,123, (6.4.2)]{SGA5}---whose proof, however, is
not given in detail.}
which is similar to (and more or less implied by) the isomorphism in \cite[6.6]{AILN}.\va2

\
Let $f\colon X\to Z$ be an arbitrary map in $\sE$. Let $Y\set X\times_ZX$, and let $\pi_1$~
and~$\pi_2$ be the projections from $Y$ to $X$. 
For  $M, N\in\Dqc(X)$ there~are natural maps\va2
\begin{equation}
\label{setup}
\begin{aligned}
\pi_1^*\RHqc{\sX}(M,f^!\OZ)\Otimes{\<Y}\pi_2^*N
&\lto \RHqc{Y}(\pi_1^*M,\>\pi_1^*f^!\OZ)\Otimes{\<Y}\pi_2^*N\\
&\lto \RHqc{Y}(\pi_1^*M,\>\pi_1^*f^!\OZ\Otimes{\<Y}\pi_2^*N).
\end{aligned}
\end{equation}

\vskip1pt
\noindent The first of these is the unique one making the following otherwise natural diagram (whose top left entry is in $\Dqc(Y)$) commute:\va2
\begin{equation}\label{setup'}
\CD
\pi_1^*\RHqc{\sX}(M,f^!\OZ)\Otimes{\<Y}\pi_2^*N
@>>> \RHqc{Y}(\pi_1^*M,\>\pi_1^*f^!\OZ)\Otimes{\<Y}\pi_2^*N\\
@VVV @VVV\\
\pi_1^*\R\sHom_\sX(M,f^!\OZ)\Otimes{\<Y}\pi_2^*N
@>>> \R\sHom_Y(\pi_1^*M,\>\pi_1^*f^!\OZ)\Otimes{\<Y}\pi_2^*N
\endCD
\end{equation}
\vskip1pt

In \cite[\S5.7\kf]{AJL3} it is shown that for \emph{perfect} $\sE$-maps  $e\colon X\to Z$ (that~is, $e$~has finite flat dimension),
 the functor\va{-1} $e^!\colon\Dqcpl(Z)\to \Dqcpl(X)$ extends pseudo\-functorially to a 
functor---still denoted $e^!$---from $\Dqc(Z)$ to $\Dqc(X)$ such that \va1
\begin{equation}\label{extend!}
e^!F=e^!\OZ\Otimes{\sX}\LL e^*\<\<F\qquad (F\in\Dqc(X).
\end{equation}

For proper $e$, the extended $e^!$ is still right-adjoint to $\R e_*$ (see \cite[proof of Prop.\,5.9.3]{AJL3}).

The complex $M\in\D(X)$ is \emph{perfect relative to} $f$ (or simply $f$-\emph{perfect}) if $M$~has coherent cohomology and has
finite flat dimension over $Z$. In particular, the map~$f$ is perfect if and only if
$\OX$ is $f$-perfect.

\begin{sublem}
\label{Gmap}
If\/ the\/ $\sE$-map\/ $f\colon X\to Z$ is flat and\/ $M\in\D(X)$ is $f\<$-perfect, then for all\/ \mbox{$N\in\Dqc(X),$} the composite map\/~\eqref{setup} is an isomorphism.
\end{sublem}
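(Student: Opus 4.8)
The plan is to reduce to the affine case and then to show separately that each of the two maps whose composite is \eqref{setup} is an isomorphism; since a composite of isomorphisms is an isomorphism, this suffices. The statement is local on $X$ and $Z$, so one may assume $X=\spec S$ and $Z=\spec R$ with $\sigma\colon R\to S$ a flat homomorphism of noetherian rings and $Y=\spec(\env S)$, though in fact the argument I have in mind does not really need this reduction.

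For the first map, I would note that, as tensoring with $\pi_2^*N$ preserves isomorphisms, it is enough that the base-change map $\pi_1^*\RHqc{\sX}(M,f^!\OZ)\to\RHqc{Y}(\pi_1^*M,\pi_1^*f^!\OZ)$ be an isomorphism. Because $\pi_1$ is flat, $\pi_1^*M$ has coherent cohomology and $\pi_1^*f^!\OZ=\LL\pi_1^*f^!\OZ$ lies in $\Dqcpl(Y)$; hence by \ref{RHom^qc} the two vertical arrows of the defining diagram \eqref{setup'} are isomorphisms, and the problem reduces to the bottom arrow $\pi_1^*\R\sHom_\sX(M,f^!\OZ)\to\R\sHom_Y(\pi_1^*M,\pi_1^*f^!\OZ)$. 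That map is an isomorphism for \emph{any} flat morphism and \emph{any} $M$ with coherent cohomology, the only relevant feature of $f^!\OZ$ being that it lies in $\Dqcpl$: locally one resolves $M$ by a bounded-above complex of finite free $\OX$-modules, whereupon the comparison becomes formal (cf.\ \cite[\S3.5]{li}). Note that $f$-perfectness of $M$ plays no role here.

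The hypothesis on $M$ enters only in the second map. The key points I would use are: (i) the cartesian square with vertical arrows $\pi_2,f$ and horizontal arrows $\pi_1,f$, together with stability of relative perfection under base change, shows that $\pi_1^*M$ is \emph{$\pi_2$-perfect}; (ii) $f$ is flat, hence perfect, so \eqref{bch} applied to that square gives an isomorphism $\pi_1^*f^!\OZ\iso\pi_2^!\OX$; and (iii) $\pi_2$ is flat, hence perfect, so \eqref{extend!} gives $\pi_2^!\OX\Otimes{\<Y}\pi_2^*N\cong\pi_2^!N$. Under (ii) and (iii) the second map of \eqref{setup} is carried to the natural comparison $\RHqc{Y}(\pi_1^*M,\pi_2^!\OX)\Otimes{\<Y}\pi_2^*N\to\RHqc{Y}(\pi_1^*M,\pi_2^!N)$, which is an isomorphism by the relative-duality theory of the perfect map $\pi_2$ (\cite[\S5]{AJL3}): for a perfect map $e$ and an $e$-perfect complex $P$ one has $e^!G=e^!\OZ\Otimes{}\LL e^*G$ and, $P$ being reflexive with respect to the relative dualizing complex $e^!\OZ$, the natural map $\R\sHom(P,e^!\OZ)\Otimes{}\LL e^*G\to\R\sHom(P,e^!G)$ is an isomorphism for every $G\in\Dqc$.

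The hard part will be the final bookkeeping: one must verify that the isomorphism assembled in the previous paragraph out of \eqref{bch}, \eqref{extend!} and the relative-duality map genuinely \emph{is} the second arrow of \eqref{setup}, which amounts to chasing the definitions of the several base-change and evaluation maps involved. I expect this compatibility check---together with pinning down the precise form of the relative-duality statement that is needed when $N$ is unbounded---to be the only real difficulty; everything else is standard. As an alternative to the last two paragraphs, once one has reduced to the affine situation one can rewrite \eqref{setup} explicitly as a map of $\env S$-complexes and deduce its bijectivity from \cite[6.6]{AILN}, which is essentially this assertion in algebraic language.
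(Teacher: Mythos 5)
Your proposal is correct and follows essentially the same route as the paper's proof: you treat the first map of \eqref{setup} by observing that the vertical arrows of \eqref{setup'} are isomorphisms (quasi-coherence of the relevant $\R\sHom$'s, via \S\ref{RHom^qc}) together with flat base change for $\R\sHom$, and the second map via the base-change isomorphism \eqref{bch2}, the $\pi_2$-perfection of $\pi_1^*M$, and the duality isomorphism of \cite[6.6]{AILN} for the perfect map $\pi_2$. The one point you flag but leave open---the precise form of that duality statement when $N$ is unbounded---is exactly what the paper settles, by replacing the duality isomorphism (5.9.1) of \emph{loc.\,cit.} with \eqref{sheaf dual} and using the extension \eqref{extend!} of $e^!$ to all of $\Dqc$ for finite-flat-dimensional maps.
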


\begin{proof} It holds  that
$\R\sHom_\sX(M,f^!\OZ)\in\Dqc(X)$ and 
$\R\sHom_Y(\pi_1^*M,\>\pi_1^*f^!\OZ)\in\Dqc(Y)$ (see proof of~\cite[6.6]{AILN}); and so the vertical 
arrows in \eqref{setup'} are isomorphisms. 
So is the bottom arrow in \eqref{setup'}
 (see e.g., \cite[ (4.6.6)]{li}). Hence the first map in \eqref{setup} is an
 isomorphism. 

As for the second, from the flatness of $f$ it follows that 
$\pi_1^*M$ is $\pi^{}_2$-perfect, and that there is a base\kf-change isomorphism (cf.~\eqref{bch})
\begin{equation}\label{bch2}
\pi_1^*f^!\OZ\iso\pi_2^!\OX.
\end{equation}
 The conclusion follows then from~\cite[6.6]{AILN} (with $g\set\pi^{}_2$, 
 $E'=\OX$, $F'=N$, and with $\R\sHom$ replaced throughout by $\RHqc{}$), in whose proof we can replace the duality isomorphism~(5.9.1) there by \eqref{sheaf dual} in this paper, and use the definition \eqref{extend!} of $e^!$ for any finite\kf-flat-dimensional map $e$ in~$\sE$ (for instance $g$, $h$ and $i$ in \emph{loc.\,cit.}), thereby rendering unnecessary the boundedness condition  in \emph{loc.\,cit.} on~the complex~$F'$.  (In this connection, note  that if $e=hi$ with $h$  smooth and $i$  a closed immersion then $i$ is perfect 
 \mbox{\cite[p.\,246, 3.6]{Il}}.)
\end{proof}

For $f\colon X\to Z$ a flat $\sE$-map and $M\in\Dqc(X)$  set \va{-1}
\[
M^\vee\set\RHqc{X}(M, f^!\OZ\<),
\]
and consider the composite map, with $N\in\Dqcpl(X),$
\begin{equation}
\label{setup2}
\RHqc{Y}(\pi_1^*M,\pi_2^{\times}\<N) 
\lto  \RHqc{Y}(\pi_1^*M,\pi_2^!N) \iso \pi_1^*M^{\vee} \Otimes{\<Y}\pi_2^*N
\end{equation}
where the first map is induced by $\psi(\<\pi^{}_2\<)$, and the isomorphism on the right is gotten by inverting the one given by \ref{Gmap} and then replacing 
\mbox{$\pi_1^*f^!\OZ\<\Otimes{\<Y}\<\pi_2^*N$} by the isomorphic object $\pi_2^!N$ (see \eqref{extend!} and \eqref{bch2}).
Remark 6.2 in \cite{AILN} authorizes replacement in (\ref{setup2}\kf) of $M$ by $M^\vee\<$, and recalls that the natural map is an isomorphism $M\iso \smash{{M^\vee}}^\vee$; thus one gets the composite map\va{-1}
\begin{equation*}
\label{setup2v}
\RHqc{Y}(\pi_1^*M^\vee\<,\pi_2^{\times}\<N) 
\lto  \RHqc{Y}(\pi_1^*M^\vee\<,\pi_2^!N) \!\iso\! \pi_1^*M \Otimes{\<Y}\pi_2^*N.
\tag*{(\ref{setup2}\kf){$^\vee$}}
\end{equation*}

\pagebreak

\begin{subthm}
\label{global4.6}
If\/ $M\in\Dqc(X)$ is\/ $f$-perfect, and\/ $N\in\Dqcpl(X),$\va{.6} then application of\/~ $\>\LL \delta^*\!\<$ 
$($resp.~ $\delta^{!})$ to the composite\/ \textup{(\ref{setup2}\kf)} $($resp.~\textup{(\ref{setup2}\kf)}$^\vee)$\va1 produces an isomorphism\va{-2}
\begin{align*}
\LL\delta^*\RHqc{Y}(\pi_1^*M,\>\pi_2^\times \<\<N) &\iso M^\vee\Otimes{\sX}N \\
(\kf resp.)\qquad\qquad\delta^!(\pi_1^*M\Otimes{Y}\pi_2^*N)&\osi  \RHqc{\sX}(M^{\vee},N) .
\end{align*}
\end{subthm}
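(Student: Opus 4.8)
The plan is to apply the two functors to the composites \eqref{setup2} and \textup{(\ref{setup2}\kf)}$^\vee$ term by term; the key observation is that $\pi_1^{}\delta=\pi_2^{}\delta=\id_\sX$, so that $\pi_2^{}\circ\delta$ is proper and Proposition~\ref{pullback1} applies with $g=\delta$ and $f=\pi_2^{}$.

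\emph{For the $\LL\delta^*$ statement}, I would first recall that \eqref{setup2} is the composition of the map obtained by applying $\RHqc{Y}(\pi_1^*M,-)$ to $\psi(\pi^{}_2)(N)\colon\pi_2^{\times}N\to\pi_2^!N$ with the isomorphism coming from Lemma~\ref{Gmap} (together with the base\kf-change isomorphisms \eqref{bch2} and \eqref{extend!}). Applying $\LL\delta^*$ to the first of these, I would invoke form (\ref{pullback1}.1) of Proposition~\ref{pullback1} with $(g,f,F,G)=(\delta,\pi^{}_2,\pi_1^*M,N)$: this is legitimate since $\pi_1^*M\in\Dqc(Y)$, $N\in\Dqcpl(X)$, and $\pi_2^{}\delta=\id_\sX$ is proper, and it shows that $\LL\delta^*$ of the first map is an isomorphism. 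Applying $\LL\delta^*$ to the second map (already an isomorphism) and using that $\LL\delta^*$ is monoidal together with $\pi^{}_i\delta=\id_\sX$ identifies its target as $\LL\delta^*\pi_1^*M^\vee\Otimes{\sX}\LL\delta^*\pi_2^*N\cong M^\vee\Otimes{\sX}N$. Composing the two isomorphisms yields $\LL\delta^*\RHqc{Y}(\pi_1^*M,\pi_2^\times N)\iso M^\vee\Otimes{\sX}N$, as claimed.

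\emph{For the $\delta^!$ statement}, I would first note that $\delta$, being the diagonal of the separated map $f$, is a closed immersion, hence proper, so $\delta^!\cong\delta^\times$ on the bounded\kf-below, coherent enough complexes involved. Then \textup{(\ref{setup2}\kf)}$^\vee$ is just \eqref{setup2} with $M$ replaced by the (by \cite[Remark 6.2]{AILN}) also-$f$-perfect complex $M^\vee$, with $M^{\vee\vee}\cong M$. I would apply $\delta^!=\delta^\times$ to it: form (\ref{pullback1}.2) of Proposition~\ref{pullback1}, with $(g,f,F,G)=(\delta,\pi^{}_2,\pi_1^*M^\vee,N)$, makes $\delta^\times$ of the first map an isomorphism, and $\delta^!$ of the second map (already an isomorphism) has target $\delta^!(\pi_1^*M\Otimes{Y}\pi_2^*N)$. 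To identify the source, I would apply the isomorphism \eqref{^timesHom} to the map $\delta$ with $G=\pi_1^*M^\vee$ and $H=\pi_2^\times N$; it reads $\RHqc{\sX}(\LL\delta^*\pi_1^*M^\vee,\>\delta^\times\pi_2^\times N)\iso\delta^\times\RHqc{Y}(\pi_1^*M^\vee,\pi_2^\times N)$, and here $\LL\delta^*\pi_1^*M^\vee\cong M^\vee$ and $\delta^\times\pi_2^\times N\cong\id_\sX^\times N\cong N$, using $\pi^{}_2\delta=\id_\sX$, pseudofunctoriality of $(-)^\times$, and the fact that $\id_\sX^\times$ restricts to the identity on $\Dqc(X)$ (\S\ref{RHom^qc}). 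Composing the resulting isomorphism $\RHqc{\sX}(M^\vee,N)\iso\delta^!\RHqc{Y}(\pi_1^*M^\vee,\pi_2^\times N)$ with $\delta^!$ of \textup{(\ref{setup2}\kf)}$^\vee$ gives $\delta^!(\pi_1^*M\Otimes{Y}\pi_2^*N)\osi\RHqc{\sX}(M^\vee,N)$.

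The only step that is not purely mechanical is this last identification of the source in the $\delta^!$ case: one must use \eqref{^timesHom} for the closed immersion $\delta$ and be careful that the canonical identifications $\LL\delta^*\pi_1^*\cong\id$ and $\delta^\times\pi_2^\times\cong\id_\sX^\times$ are the pseudofunctorial ones, so that the displayed isomorphism really is the image of \textup{(\ref{setup2}\kf)}$^\vee$ under $\delta^!$ and not merely abstractly isomorphic to it; the analogous (but easier) care is needed in the $\LL\delta^*$ case. Beyond that there is only bookkeeping—checking the relevant $\RHqc{}$-objects lie in $\Dqcpl$ with coherent enough cohomology so that $\delta^!\cong\delta^\times$ applies. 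I expect no genuine obstacle, since the substance of the theorem is already contained in Propositions~\ref{pullback1} and~\ref{Gmap}.
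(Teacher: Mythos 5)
Your proposal is correct and follows essentially the same route as the paper's (very terse) proof: apply Proposition~\ref{pullback1} (with $g=\delta$, $f=\pi_2$, using that $\pi_2\delta=\id_\sX$ is proper) to the first map in \eqref{setup2}, respectively in (\ref{setup2}\kf)$^\vee$, and use \eqref{^timesHom} to identify $\delta^\times\RHqc{Y}(\pi_1^*M^\vee,\pi_2^\times N)$ with $\RHqc{\sX}(M^\vee,N)$. The extra identifications of source and target that you spell out are exactly the bookkeeping the paper leaves implicit.
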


\begin{proof}
By~\ref{pullback1}, application of $\LL\delta^*$ to the first map in~(\ref{setup2}\kf) produces an isomorphism.  Similarly, in view of \eqref{^timesHom}, applying  $\delta^\times$ ($=\delta^!\>$) to the first map 
in~(\ref{setup2}\kf)$^\vee$ produces an isomorphism. 
\end{proof}

\begin{subrem} Using Remark~\ref{for fTd} 
for the first map in (\ref{setup2}\kf), \emph{one can extend  
Theorem~\ref{global4.6} to all} $N\in\Dqc(X)$. This results immediately
from the fact, given by \cite[Proposition 7.11]{Nm4}, that \emph{if\/ $e=pu$ is a compactification of a perfect\/ $\sE$-map\/ \mbox{$e\colon X\to Z$} then the following natural map is an isomorphism}:
\[
e^!N\!:\underset{\eqref{extend!}}{=\!\!=} e^!\OZ\Otimes{\sX}\LL e^*N
\cong u^*(p^\times\OZ\Otimes{\sX}\LL p^*\<\<N)\to u^*p^\times\<\< N\qquad(N\in\Dqc(Z)).
\]

\end{subrem}

\pagebreak[3]
\begin{subrem}\label{globalization}
The first isomorphism in Theorem~\ref{global4.6}  is a globalization (for flat  $f$
and cohomologically bounded-below $N$) of \cite[Theorem\,4.6]{AILN}.  \mbox{Indeed,} let $\sigma\colon R\to S$ be an essentially-finite-type flat homomorphism of noetherian rings,  $f=\spec(\sigma)$, $\env S\set S\otimes_R S$ and $p_i\colon S\to\env S\ (i=1,2)$ the canonical maps. 
Let   $M,N,D^\sigma\in\D(S)$, where $M$ is \mbox{$\sigma$-perfect} and $D^\sigma$ is~a relative dualizing complex, sheafifying to \mbox{$\widetilde{D^\sigma}=f^!\OZ$} \cite[Example 2.3.2]{AIL}.   Set $X\set\spec S$, $Z\set\spec R$, $Y\set X\times_ZX$, and let $\delta\colon X\to Y$ be the diagonal.
Then (as the cohomology of $M$ is bounded and finitely generated over $S$)
$\delta_*\<\big({\widetilde M}^\vee\Otimes{\sX}\widetilde N\big)$ sheafifies 
$\R\<\<\Hom_S(M,D^\sigma)\Otimes{S}N\in\D(\env S)$,
and, with notation as in \S\ref{sigma,sigma}, $\delta_*\LL\delta^*\R\>\sHom_Y(\pi_1^*M,\>\pi_2^\times \<\<N)$ sheafifies\va{-1}
\begin{align*} S\Otimes{{\env S}}\R\<\<\Hom_{\env S}(M\otimes_S \env S\<\<,\,\R\<\<\Hom_{\>p^{}_2}\<({\env S},N))
&\cong S\Otimes{{\env S}}\R\<\<\Hom_{\>p^{}_2}\<(M\otimes_S \env S,N)\\
&\cong S\Otimes{{\env S}}\R\<\<\Hom_{\>p^{}_2}\<(M\otimes_R S,N)\\
&\cong S\Otimes{{\env S}}\R\<\<\Hom_{\>\sigma\<,\>\sigma}(M,N).
\end{align*}
Thus in this situation, application of $\delta_{*}$ to \eqref{global4.6} gives the existence of a functorial isomorphism \eqref{AILN4.6}
(that should be closely related to---if not identical with---the one in \cite[4.6]{AILN}).
\end{subrem}

\begin{subrem}\label{redn-iso}
Let $f$ be as in \ref{Gmap}, and let $\delta\colon X\to Y\set X\times_Z X$ be the diagonal map. Keeping in mind the last paragraph of section~\ref{RHom^qc} above, one checks that the reduction isomorphism \cite[Corollary 6.5]{AILN} \va{-1}
\begin{equation*}\label{HomReduction}
\boxed{\delta^!(\pi_1^*M\Otimes{\sX}\pi_2^*N)\iso \R\>\sHom_\sX(M^\vee\<,\>N\big)}\\[-1pt]
\tag{\ref{redn-iso}.1}
\end{equation*}
is inverse to the second isomorphism in ~\ref{global4.6}. (In \cite{AILN}, see the proof of Corollary~6.5, and the last  four lines of the proof of Theorem~6.1 with $(X'\<,Y'\<,Y,Z)=(Y,X,Z,X)$, $E=\OY$, and
$(g,u,\<f,v)=(\pi^{}_2, f, f, \pi^{}_1)$, so that $\nu=\gamma=\id_\sX$.)

In the affine case, with assumptions on $\sigma$, $M$ and $N$ as above, ``desheafification" of (i.e., applying derived global sections to) \eqref{HomReduction} produces a functorial  isomorphism
\[
\boxed{\R\<\Hom_{\env S}(S, M\Otimes{R} N)\iso\R\<\Hom_S(\R\<\Hom_S(M,D^\sigma),N)}
\]
with the same source and target as the one in \cite[p.\,736, Theorem 1]{AILN}. (We suspect, but
don't know, that the two isomorphisms are the same---at least up to sign.)
\end{subrem}

\end{cosa}

\medskip

\begin{cosa}\label{intres}  
In this section we review, from the perspective afforded by results in this paper, some known basic facts about \emph{integrals, residues and fundamental classes.}  The description is mostly in abstract terms. What will be new is a direct \emph{concrete} description of the fundamental class of a flat 
essentially-finite\kf-type homomorphism $\sigma\colon R\to S$ of noetherian rings (Theorem~\ref{explicit fc}).\va2

Let $I\subset S$ be an ideal such that $S/I$ is a finite
$R$-module, and let $\Gamma^{}_{\!\<\<I}$  be the subfunctor of the identity functor on $S$-modules~$M$
given objectwise by
\[
\Gamma^{}_{\!\<\<I}(M)\set\{\,m\in M\mid I^nm=0\textup{ for some }n>0\,\}.
\]
There is an obvious map from the derived functor $\R\>\Gamma^{}_{\!\<\<I}$ to the identity functor on $\D(S)$.

In view of  the isomorphism~\eqref{aff times}, one can apply derived global sections in \mbox{Remark~\ref{barint} }to get, in the present context, the following diagram, whose rectangle commutes. In this diagram, 
$\sigma_{\<\<*}\colon\D(S)\to\D(R)$ is the functor given by restricting scalars; and $\omega_\sigma$ is a 
canonical module of~$\sigma$ (that is, an $S$-module whose sheafification is a relative dualizing sheaf of $f\set\spec\sigma$, as in Remark~\ref{barint}, where the integer $d$ is defined as well); and $D^\sigma$ is, as in ~\ref{globalization}, a relative dualizing complex.\va4
\[
\def\1{$\sigma_{\<\<*}\R\>\Gamma^{}_{\!\<\<I}\>\R\<\<\Hom_\sigma(S,R)$}
\def\2{$\sigma_{\<\<*}\R\>\Gamma^{}_{\!\<\<I}D^\sigma$}
\def\3{$\sigma_{\<\<*}\R\>\Gamma^{}_{\!\<\<I}\,\omega_\sigma[d]$}
\def\4{$\R\<\<\Hom_R(S,R)$}
\def\5{$\R\<\<\Hom_R(R,R)$}
\def\6{$R$}
\def\7{$\sigma_{\<\<*}\R\<\<\Hom_\sigma(S,R)$}
\bpic[xscale=4.7, yscale=2.5]

   \node(11) at (1,-1) {\1};     
   \node(12) at (2.3,-1){\2};   
   \node(13) at (2.92,-1){\3};   

   \node(151) at (1,-1.5){\7}; 

   \node(21) at (1,-2){\4};    
   \node(215) at (1.75,-2){\5};   
   \node(22) at (2.3,-2){\6};

    \draw[->] (11)--(12) node[above, midway, scale=.7]{$\Iso$}
                                    node[below=1pt, midway, scale=.7]{$\via\psi$};
    \draw[<-] (12)--(13) ;

    \draw[->]  (21)--(215) node[below=1pt, midway, scale=.7]{$\via\sigma$};
    \draw[double distance=2pt] (215)--(22);
    \draw[->] (11)--(151) ;
    \draw[double distance=2pt] (151)--(21);

    \draw[->] (12)--(22) node[right=2pt, midway, scale=.7]{$\bar{\!\int^{}}_{\!\!I}$} ;   

 \epic
\]
If $\sigma$ is Cohen-Macaulay and equidimensional, the natural map
is an isomorphism $\omega_\sigma[d\>]\iso D^\sigma$; and
application of $\textup{H}^0$ to the preceding diagram produces a commutative diagram of $R$-modules\va3
\[
\def\1{$\Gamma^{}_{\!\<\<I}\<\Hom_\sigma(S,R)$}
\def\2{$\Gamma^{}_{\!\<\<I}\>\>\sigma^!\<R$}
\def\3{$\textup{H}^d_I\,\omega_\sigma$}
\def\4{$\Hom_R(S,R)$}
\def\6{$R$}
\bpic[xscale=4.7, yscale=2.5]

   \node(11) at (1,-1) {\1};     
   \node(13) at (2.3,-1){\3};   

   \node(21) at (1,-2){\4};    
   \node(22) at (2.3,-2){\6}; 

    \draw[->] (11)--(13) node[above, midway, scale=.7]{$\Iso$}
                                    node[below=1pt, midway, scale=.7]{$\via\psi$};

    \draw[->]  (21)--(22) node[below=1pt, midway, scale=.7]{$\textup{evaluation at 1}$};

    \draw[->] (11)--(21) ;

    \draw[<-] (22)--(13) node[midway, right=1pt, scale=.7]{$\int_I$};

 \epic
\]
\vskip3pt
This  shows that \emph{an explicit description of\/ $(\via\psi)^{-1}$ \va1
is more or less the same as an explicit description of $\int_{\<I}$---and so, when $I$ is a maximal ideal, of residues.}  ``Explicit" includes the realization of
the relative canonical module~$\omega_\sigma$ in terms of regular differential forms (cf.~Remark~\ref{barint}).

Such a realization  comes out of the theory of the \emph{fundamental class}
$\fundamentalclass{f}$  of a flat $\sE$-map $f$, as indicated below. This $\fundamentalclass{f}$   is a key link between the abstract duality theory of $f$ and its canonical reification via differential forms. It may be viewed as an orientation, compatible with essentially 
\'etale base change, in a suitable bivariant theory on the category of flat $\sE$-maps \cite{AJL4}.\va2

Given a flat $\sE$-map $f\colon X\to Z$, with $\pi_1$ and $\pi_2$ the projections from $Y\set X\times_Z X$ to $X$, and $\delta\colon X\to Y$  the diagonal map,
let $\fundamentalclass{\<f}$ be, as in \cite[Example 2.3]{AJL4}, the natural composite $\D(X)$-map\va4
\begin{equation}\label{fclass}
\LL\delta^*\delta_*\OX\iso \LL\delta^*\delta_*\delta^!\pi_1^!\OX\lto\LL\delta^*\pi_1^!\OX\;
\underset{\eqref{bch}}\iso
\LL\delta^*\pi_2^*f^!\OZ\iso f^!\OZ.
\end{equation}

Let $\mathcal J$ be the kernel of the natural surjection $\OY\twoheadrightarrow \delta_*\OX$.
Using a flat $\OY$-resolution of $\delta_*\OX$ one gets a natural isomorphism of $\OX$-modules
$$
\Omega^1_f=
\mathcal J/\>\mathcal J^{\>2}\cong \mathcal T\!or_1^{\OY}\!(\delta_*\OX,\delta_*\OX)=H^{-1}\LL\delta^*\delta_*\OX,
$$
whence a map of graded-commutative $\OX$-algebras, with $\Omega^i_f\set \wedge^{\!i}\mkern1.5mu \Omega^1_f\>$,
\begin{equation}\label{Omega and H}
\oplus_{i\ge 0} \,\Omega^i_f \to\oplus_{i\ge 0}\,\mathcal T\!or_i^{\OY}\!(\delta_*\OX,\delta_*\OX)
=\oplus_{i\ge 0}\,H^{-i}\LL\delta^*\delta_*\OX\>.
\end{equation}

In particular one has, with $d$ as above, a natural composition
\[
\gamma^{}_{\<f}\colon\Omega^d_f\to H^{-d}\LL\delta^*\delta_*\OX\xto{\<\<\via\>\fundamentalclass{\<f}}H^{-d}f^!\OZ=:\omega_{\<f}.
\]
(In the literature, the term ``fundamental class"  often refers to this $\gamma^{}_{\<f}$ rather than to 
$\fundamentalclass{f}\>$.) When $f$ is essentially smooth, this map is an isomorphism, as is 
$\omega_{\<f}\to f^!\OZ\>$. (The proof uses the known fact that there exists an isomorphism
$\Omega^d_f\iso f^!\OZ\>$, but does not reveal the relation between that isomorphism
and~$\gamma^{}_{\<f}$, see
\cite[2.4.2, 2.4.4]{AJL4}.) It follows that if $f$ is just \emph{generically} smooth, then 
$\gamma^{}_{\<f}$ is a generic isomorphism. For example, if $X$ is a reduced algebraic variety
over a field $k$, of pure dimension $d$, with structure map $f\colon X\to \spec k$, then one deduces that $\omega_{\<f}$ is canonically represented by
a coherent sheaf of meromorphic $d$-forms---the sheaf of regular $d$-forms---containing the sheaf~$\>\Omega^d_{\<\<f}$ of holomorphic $d$-forms, with equality over the smooth part of~$X\<$. 

From $\gamma^{}_{\<f}$ and the above $\int_{\<I}$ one deduces a map
\[
\textup{H}^d_I\>\Omega^d_\sigma\to R
\]
that generalizes the classical residue map.

\medskip
Theorem~\ref{explicit fc} below provides a direct concrete definition of the fundamental
class of a flat essentially-finite-type homomorphism $\sigma\colon R\to S$ of noetherian rings.

First, some preliminaries. As before, set $\env S\set S\otimes_R S$, let $p^{}_1\colon S\to\env S$ be the homomorphism such that for $s\in S$, $p^{}_1(s)=s\otimes 1$, and $p^{}_2\colon S\to\env S$ such that $p^{}_2(s)=1\otimes s$. 

Let $f\colon\spec S=:\<X\to Z\set\spec R$ be the scheme\kf-map corresponding to~$\sigma$. Let $\pi_1$ and~$\pi_2$ be the projections (corresponding to 
$p^{}_1$ and $p^{}_2$) from $X\times_Z X$ to $X\<$.\va1

Let $\Hom_{\>\sigma\<,\>\sigma}$ and $\Hom_{p^{}_1}$ be as in \S\ref{rem:affine-case}.

For an $S$-complex $F$, considered as an $\env S$-complex via the multiplication map $\env S\to S$, let  $\mu_F\colon F\to \Hom_{\>\sigma\<,\>\sigma}(S,\>F)$ be the $\env S$-homomorphism  taking\/ $f\in F$ to the map
$s\mapsto sf$.  

For an $S$-complex $E$, there is an obvious $\env S$-isomorphism 
\[
\Hom_{\>\sigma\<,\>\sigma}(S,\>E)\iso\Hom_{p^{}_1}\<\<(\env S,\>E). 
\]
Taking $E$ to be a K-injective resolution of $F$ (over $S$, and hence, 
since $\sigma$ is flat, also over $R$), one gets the isomorphism in the following statement.

\begin{sublem}\label{L3.2.3}
Let\/ $F\in\D(S)$ have sheafification $F^{\>\sim}\in\D(X)$.  The sheafification of the natural composite\/ $\D(\env S)$-map
\[
\xi(F)\colon F \xto{\,\mu_{\<F}\,}\Hom_{\>\sigma\<,\>\sigma}(S,\>F)
\lto \R\<\<\Hom_{\>\sigma\<,\>\sigma}(S,\>F)
\iso \R\<\<\Hom_{p^{}_1}\<(\env S\<,\>F)
\]
is the natural composite $(\<$with\/ $\epsilon^{}_2$ the counit map\/$)$
\begin{equation*}\label{iso323}
\delta_*F^{\>\sim}\iso\delta_*\delta^\times\<\pi_1^\times F^{\>\sim}
\xto{\ \epsilon^{}_2\ } 
\pi_1^\times F^{\>\sim}.
\tag{\ref{L3.2.3}.1}
\end{equation*}
\end{sublem}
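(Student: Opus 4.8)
The plan is to use the adjunction $\R\pi^{}_{1*}\dashv\pi_1^\times$ of \S\ref{^times} to reduce both maps in the statement to the identity of $F^{\>\sim}$.  (Below, all direct images are derived.)  Since $\pi^{}_1\delta=\id_\sX$ and $\delta$, the diagonal of the separated map $f$, is a closed immersion, pseudofunctoriality of $(-)_*$ together with that adjunction furnish a natural bijection
\[
\Hom_{\D(Y)}\big(\R\delta_*F^{\>\sim},\,\pi_1^\times F^{\>\sim}\big)\iso
\Hom_{\D(X)}\big(\R\pi^{}_{1*}\R\delta_*F^{\>\sim},\,F^{\>\sim}\big)=\Hom_{\D(X)}(F^{\>\sim},F^{\>\sim}),
\]
taking a map $g$ to $\epsilon^{}_{\<\pi^{}_1}\!\smallcirc\R\pi^{}_{1*}(g)$, where $\epsilon^{}_{\<\pi^{}_1}\colon\R\pi^{}_{1*}\pi_1^\times\to\id$ is the counit and the last identification uses $\R\pi^{}_{1*}\R\delta_*\cong\R(\pi^{}_1\delta)_*=\id$.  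So it is enough to show that both the sheafification of $\xi(F)$ and the composite \eqref{iso323} are sent to $\id_{F^{\>\sim}}$ by this bijection.

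Take \eqref{iso323} first.  The displayed isomorphism $\R\delta_*F^{\>\sim}\iso\R\delta_*\delta^\times\pi_1^\times F^{\>\sim}$ is $\R\delta_*$ applied to the canonical isomorphism $F^{\>\sim}\iso\delta^\times\pi_1^\times F^{\>\sim}$, which is the unit isomorphism $F^{\>\sim}\iso\id_\sX^\times F^{\>\sim}$ of \S\ref{RHom^qc} followed by the inverse of the pseudofunctoriality isomorphism $\delta^\times\pi_1^\times\iso(\pi^{}_1\delta)^\times=\id_\sX^\times$.  Because the adjunctions $\R(-)_*\dashv(-)^\times$ compose pseudofunctorially \cite[Corollary (4.1.2)]{li}, the composite $\epsilon^{}_{\<\pi^{}_1}\!\smallcirc\R\pi^{}_{1*}(\epsilon^{}_2)$ — with $\epsilon^{}_2\colon\R\delta_*\delta^\times\to\id$ the counit occurring in \eqref{iso323} — becomes, once transported along the pseudofunctoriality isomorphisms $\R\pi^{}_{1*}\R\delta_*\iso\R(\pi^{}_1\delta)_*$ and $\delta^\times\pi_1^\times\iso(\pi^{}_1\delta)^\times$, the counit of $\R(\pi^{}_1\delta)_*\dashv(\pi^{}_1\delta)^\times$, i.e. the counit $\id_\sX^\times\to\id$ of the adjunction between the inclusion $\Dqc(X)\hookrightarrow\D(X)$ and the quasi-coherator.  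A triangle identity shows that the composition of that counit with the unit isomorphism $F^{\>\sim}\iso\id_\sX^\times F^{\>\sim}$ is $\id_{F^{\>\sim}}$, which is the required conclusion for \eqref{iso323}.

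For $\xi(F)$, I would pass to the ring side, choosing $F$ K-injective over $S$ and hence — by flatness of $\sigma$ — over $R$.  Then $\R\pi^{}_{1*}$ is restriction of scalars along $p^{}_1\colon S\to\env S$, and by \eqref{aff times}, applied to $\pi^{}_1$ (whose associated ring map is $p^{}_1$), the object $\pi_1^\times F^{\>\sim}$ is the sheafification of $\R\<\Hom_{p^{}_1}(\env S,F)=\Hom_S(\env S,F)$, with $\env S$ viewed as an $S$-module via $p^{}_1$.  The essential point is that, under this identification, $\epsilon^{}_{\<\pi^{}_1}$ is the sheafification of ``evaluation at $1\otimes1$'', the map $\Hom_S(\env S,F)\to F$ sending $\phi$ to $\phi(1\otimes1)$.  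Indeed, \eqref{aff times} is characterized by the condition $\R\pi^{}_{1*}\varrho=\zeta$ for the isomorphism $\zeta$ of Lemma~\ref{RHom} attached to $\pi^{}_1$, and the commutative square of that Lemma identifies $\epsilon^{}_{\<\pi^{}_1}\!\smallcirc\R\pi^{}_{1*}\varrho$ with the composite $\RHqc{\sX}(\R\pi^{}_{1*}\OY,F^{\>\sim})\to\RHqc{\sX}(\OX,F^{\>\sim})\iso F^{\>\sim}$ induced by the natural map $\OX\to\R\pi^{}_{1*}\OY$; desheafifying via fact~1 in the proof of Corollary~\ref{affine locimm}, this is precisely $\phi\mapsto\phi(1\otimes1)$.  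On the other hand, unwinding \eqref{ss to p} shows that $\xi(F)$ sends $f\in F$ to the homomorphism $s\otimes s'\mapsto ss'f$ (the image of $\mu_F(f)=(s\mapsto sf)$), and the value of this homomorphism at $1\otimes1$ is $f$.  Hence $\epsilon^{}_{\<\pi^{}_1}\!\smallcirc\R\pi^{}_{1*}(\xi(F)^{\>\sim})=\id_{F^{\>\sim}}$, so the two maps in the statement agree.

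The step I expect to be the main obstacle is the identification, in the third paragraph, of the abstract counit $\epsilon^{}_{\<\pi^{}_1}$ with the concrete ``evaluation at $1\otimes1$'' homomorphism; this is precisely where one must invoke the characterization of \eqref{aff times} through Lemma~\ref{RHom} (equivalently, through sheafified duality).  The remaining work — the pseudofunctoriality bookkeeping of the first two paragraphs and the computation with the definitions of \S\ref{rem:affine-case} in the last — is routine.
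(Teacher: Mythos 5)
Your proposal is correct, but it is organized differently from the paper's argument. The paper concretizes the abstract composite \eqref{iso323} term by term: it identifies $\pi_1^\times F^{\>\sim}$ and $\delta^\times$ with sheafifications of $\R\<\<\Hom_{p^{}_1}\<(\env S,F)$ and $\R\<\<\Hom_m(S,-)$, uses Lemma~\ref{RHom} (for $\delta$) to see that the counit $\epsilon^{}_2$ sheafifies ``evaluation at $1$'', checks that the unlabeled isomorphism in \eqref{iso323} sheafifies $F\iso\R\<\<\Hom_m(S,\R\<\<\Hom_{p^{}_1}\<(\env S,F))$, and then, with $F$ K-injective, reads off that \eqref{iso323} sheafifies the explicit map $f\mapsto[s\otimes s'\mapsto ss'\<f\>]$, which equals $\xi(F)$. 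You instead transpose the whole comparison across the adjunction $\R\pi^{}_{1*}\dashv\pi_1^\times$ and show that both maps have adjunct $\id_{F^{\>\sim}}$: for \eqref{iso323} this is purely formal (pseudofunctoriality of the adjunction, \cite[Corollary (4.1.2)]{li}, plus a triangle identity for the inclusion--quasi-coherator adjunction), while for $\xi(F)$ you identify the counit $\epsilon^{}_{\<\pi^{}_1}$ with ``evaluation at $1\otimes1$'' via the property $f_*\varrho=\zeta$ of \eqref{aff times} and Lemma~\ref{RHom} applied to $\pi^{}_1$ rather than to $\delta$. What your route buys is that you never need the algebraic description of $\delta^\times$ nor the paper's ``one checks'' identification of the first isomorphism in \eqref{iso323}, those being replaced by the formal pseudofunctoriality bookkeeping; what the paper's route buys is an explicit formula ($\xi'$) for the sheafified composite, obtained with slightly less abstract machinery. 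The ingredients you do use---\eqref{aff times} with its compatibility with $\zeta$, the computation of \eqref{ss to p} giving $s\otimes s'\mapsto ss'\<f$, K-injectivity of $F$ over $S$ and hence over $R$ by flatness of $\sigma$---are all available in the paper, and your identification of $\epsilon^{}_{\<\pi^{}_1}$ with evaluation at $1\otimes1$, which you correctly flag as the crux, is justified exactly as you indicate.
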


\begin{proof}   The sheafification 
of $\R\<\<\Hom_{p^{}_1}\<\<(\env S\<,\>F)$ is $\pi_1^\times\<\< F^{\>\sim}$, 
see \eqref{aff times}. Likewise, with $m\colon \env S\to S$ the multiplication map, and $G\in\D(\env S)$, one has that $\delta^\times \<G^{\>\sim_{\env S}}$
is the sheafification of $\R\<\<\Hom_m(S, G)$; and Lemma~\ref{RHom} implies that $\epsilon^{}_2$ is the sheafification of the ``evaluation at 1"
map\[
\textup{ev}\colon \R\<\<\Hom_m(S,\R\<\<\Hom_{p^{}_1}\<\<(\env S\<,\>F\>))\to
\R\<\<\Hom_{p^{}_1}\<\<(\env S,\>F\>).
\]

Moreover, one checks that the isomorphism 
$\delta_*F^{\>\sim}\!\iso\<\<\delta_*\delta^\times\<\pi_1^\times F^{\>\sim}\,$ 
is the sheafification of the natural isomorphism\va1
$F\!\iso\<\<\R\<\<\Hom_m(S,\R\<\<\Hom_{p^{}_1}\!(\env S\<,\>F\>))$.

Under the allowable assumption that $F$ is K-injective, one finds then that 
\eqref{iso323} is the sheafification of the map 
$\xi'(F)\colon F \to\Hom_{p^{}_1}\<(\env S\<,\>F)$ that takes $f\in F$ to the map
$[s\otimes s'\mapsto ss'\<\<f\>]$. It is simple to check that $\xi'(F)=\xi(F)$.
\end{proof}

\begin{subthm}
\label{explicit fc} Let\/ $\sigma\colon R\to S$ be a flat essentially-finite-type map of noetherian rings, and\/ $f\colon\spec S\to\spec R$ the corresponding scheme-map. Let\/~$\mu\colon S\to  \Hom_{\>\sigma\<,\>\sigma}(S,S)$ be the $\env S$-homomorphism  taking\/ $s\in S$ to multiplication by\/ $s$. Then the fundamental class\/ $\Dc_{\<f}\<$ given by\/~\eqref{fclass} is naturally isomorphic to the sheafification of the natural composite map
\[
S\Otimes{\env S}S\xto{\<\id\otimes\mu\>} S\Otimes{\env S}\Hom_{\>\sigma\<,\>\sigma}(S,S)\lto 
S\Otimes{\env S}\R\<\<\Hom_{\>\sigma\<,\>\sigma}(S,S).
\]
\end{subthm}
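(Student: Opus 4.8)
The plan is to transport the assertion through the sheafification equivalence $(-)^{\sim_S}\colon\D(S)\iso\D(\spec S)$ and reduce it to a comparison, inside $\D(X)$, of two maps out of the common object $\LL\delta^*\delta_*\OX$; the essential input is Lemma~\ref{L3.2.3}, which was set up exactly for this purpose.

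First I would fix the dictionary. Write $m\colon\env S\to S$ for the multiplication, so $\delta=\spec m$. By~\eqref{aff^*}, $(S\Otimes{\env S}S)^{\sim}\cong\LL\delta^*\delta_*\OX$, which identifies the source of the composite in the statement with the source of $\fundamentalclass{f}$ in~\eqref{fclass}. Via the canonical $\env S$-isomorphism $\R\<\<\Hom_{\>\sigma\<,\>\sigma}(S,S)\iso\R\<\<\Hom_{p^{}_1}\<(\env S,S)$ of \S\ref{rem:affine-case} and~\eqref{aff times} (applied with $p^{}_1$ in place of $\sigma$, so that $\pi_1=\spec p^{}_1$ plays the role of $f$), the complex $\R\<\<\Hom_{\>\sigma\<,\>\sigma}(S,S)$ sheafifies to $\pi_1^\times\OX$, whence by~\eqref{aff^*} again the target $S\Otimes{\env S}\R\<\<\Hom_{\>\sigma\<,\>\sigma}(S,S)$ sheafifies to $\LL\delta^*\pi_1^\times\OX$. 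Since $\pi_1\delta=\id_X$ is proper, Proposition~\ref{pullback1} gives that $\LL\delta^*\psi(\pi_1)$ is an isomorphism; composing it with $\LL\delta^*$ of the base-change isomorphism $\pi_1^!\OX\iso\pi_2^*f^!\OZ$ from~\eqref{bch} and with $\LL\delta^*\pi_2^*=(\pi_2\delta)^*=\id$ produces a canonical isomorphism $\LL\delta^*\pi_1^\times\OX\iso f^!\OZ$ (the one realizing $\widetilde{D^\sigma}\cong f^!\OZ$, cf.~\ref{affreldual}), and this is the identification of targets under which the comparison is carried out. Finally, since the map $\mu$ of the statement is exactly $\mu_S$ in the notation of \S\ref{rem:affine-case}, the composite of the statement, followed by $\id_S\Otimes{\env S}$ of the above $\env S$-isomorphism $\R\<\<\Hom_{\>\sigma\<,\>\sigma}(S,S)\iso\R\<\<\Hom_{p^{}_1}\<(\env S,S)$, is precisely $\id_S\Otimes{\env S}\xi(S)$.

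Next I would invoke Lemma~\ref{L3.2.3} with $F=S$: it says $\xi(S)$ sheafifies to the composite $\delta_*\OX\iso\delta_*\delta^\times\pi_1^\times\OX\xto{\epsilon^{}_2}\pi_1^\times\OX$, where $\epsilon^{}_2$ is the counit of $\delta_*\dashv\delta^\times$. Since $\LL\delta^*$ corresponds to $\id_S\Otimes{\env S}(-)$, the sheafification of the composite in the statement is therefore
\[
\LL\delta^*\delta_*\OX\iso\LL\delta^*\delta_*\delta^\times\pi_1^\times\OX\xto{\LL\delta^*\epsilon^{}_2}\LL\delta^*\pi_1^\times\OX,
\]
then composed with the target identification $\LL\delta^*\pi_1^\times\OX\iso f^!\OZ$ above. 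Because $\delta$ is a closed immersion, hence proper, $\delta^\times=\delta^!$ and $\epsilon^{}_2$ is equally the counit of $\delta_*\dashv\delta^!$; so, matching the displayed map step by step against~\eqref{fclass}, everything already agrees except that~\eqref{fclass} carries $\pi_1^!\OX$ where the display carries $\pi_1^\times\OX$, the bridge between the two being precisely the $\LL\delta^*\psi(\pi_1)$ built into the target identification. It thus remains to verify commutativity of the rectangle whose top edge is the composite of the first two steps of the display, whose bottom edge is the composite of the first two steps of~\eqref{fclass}, whose left edge is the identity, whose right edge is $\LL\delta^*\psi(\pi_1)$, and whose middle vertical edge is $\LL\delta^*\delta_*(\delta^\times\psi(\pi_1))$ (using $\delta^\times=\delta^!$). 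Its left square commutes by the pseudofunctoriality of $\psi$, using $\psi(\delta)=\id=\psi(\id_X)$ since $\delta$ and $\id_X$ are proper, and its right square commutes by the naturality of the counit $\delta_*\delta^\times\to\id$.

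The principal obstacle is this concluding piece of bookkeeping: no new idea is needed, but one must track carefully the several natural isomorphisms at play---the base-change map~\eqref{bch}, the pseudofunctorial structures of $(-)^\times$, $(-)^!$ and $\psi$, and the distinction between the two projections $\pi_1$ and $\pi_2$---and confirm that the chain of identifications genuinely transports the map produced by Lemma~\ref{L3.2.3} onto the composite~\eqref{fclass}. The proof of Lemma~\ref{L3.2.3} itself, given above, is the other substantive ingredient.
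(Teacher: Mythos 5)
Your proposal is correct and follows essentially the same route as the paper: reduce to the first two maps of \eqref{fclass} (the last two being isomorphisms), factor them through $\delta^\times\pi_1^\times$ via the pseudofunctoriality of $\psi$ applied to $\id_X=\pi_1\delta$ together with the isomorphism $\LL\delta^*\psi(\pi_1)$ from Proposition~\ref{pullback1}, and conclude by Lemma~\ref{L3.2.3} with $F=S$. You merely spell out the commutative rectangle and the dictionary of identifications that the paper's proof leaves implicit.
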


\begin{proof} It suffices to show that the map in Theorem~\ref{explicit fc} sheafifies to a map isomorphic to the canonical composite map 
\begin{equation*}\label{fc eqn}
\LL\delta^*\delta_*\OX\iso \LL\delta^*\delta_*\delta^!\pi_1^!\OX\lto 
\LL\delta^*\pi_1^!\OX
\tag{\ref{explicit fc}.1}
\end{equation*}
(see \eqref{fclass}, in which the last two maps are isomorphisms). \va1

Applying pseudofunctoriality of $\psi$ (Corollary~\ref{relation}) to 
$\id_X=\pi_1\delta$, one sees that the  map in \eqref{fc eqn} factors as
\[
\LL\delta^*\delta_*\OX\to \LL\delta^*\delta_*\delta^\times\pi_1^\times\OX\to 
\LL\delta^*\pi_1^\times\OX\iso\LL\delta^*\pi_1^!\OX,
\]
where the isomorphism is from Proposition~\ref{pullback1}.
Thus the conclusion follows from Lemma~  \ref{L3.2.3}.
\end{proof}
\end{cosa}
 
\begin{subex} Let $T$ be a finite \'etale $R$-algebra. The (desheafified) fundamental class 
$\Dc_{R\to T}$ is the $\D(T)$-isomorphism from $T\Otimes{\env T} T =T$ 
to~$T\Otimes{\env T}\Hom_R(T,T)\cong\Hom_R(T,R)$ taking 1 to the trace~map. \va{.6}(Cf.~\cite[Example 2.6]{AJL4}.)\va1

If $S$ is an essentially \'etale 
$T$-algebra (for instance, a localization of $T$), then there is a canonical
identification of $\Dc_{R\to S}$ with $(\Dc_{R\to T})\otimes_{\>T} S$.
(This fact results from \cite[2.5 and 3.1]{AJL4}, but can be proved more directly.) 
However,~$\Dc_{R\to S}$ depends only on $R\to S$, not on $T$.

\end{subex}

\appendix

\section{Supports} 
\label{Support}
The goal of this appendix is to establish some basic facts---used repeatedly in ~\S\ref{proper-support}---about the relation between subsets of a noetherian scheme $X$ and ``localizing tensor ideals" in $\Dqc(X)$.\va2

\pagebreak[3]
 \emph{Notation}: Let $X$ be a noetherian scheme. For any $x\in X\<$,\va1 
 let $\sO_x$ be the stalk $\sO_{\<\<X\<\<,\>x\>}$,\va{.7}  let $\kappa(x)$ be the residue field of 
$\sO_x$, let $\widetilde{\kappa(x)}$ be the corresponding sheaf  on $X_x\set\spec \sO_x$---a~quasi-coherent, \emph{flasque} sheaf, \va{.4} let $\iota_x\colon X_x\to X$ be the canonical (flat) map---a localizing immersion,\va1 and let 
\[
k(x)\set \iota_{x*}\widetilde{\kappa(x)}=\R\iota_{x*}\widetilde{\kappa(x)},
\]
a quasi-coherent flasque $\OX$-module whose stalk at a point $y$ is $\kappa(x)$ if $y$ is a specialization of $x$, and 0 otherwise.\va1 

For $E\in\D(X)$, we consider two notions of the \emph{support of} $E\>$: 
\begin{align*}
\supp(E\>)&\set\{\,x\in X\mid E\Otimes{\sX} k(x)\ne 0\in\D(X)\,\},\\
\Supp(E\>)&\set\{\,x\in X\mid E_x\ne 0\in\D(\sO_x)\,\}.
\end{align*}

Let $\D_{\cc}(X)$ ($\Dcpl(X)$) be the full subcategory of $\D(X)$ spanned by the complexes with coherent cohomology modules (vanishing in all but finitely many negative degrees). For affine $X$ and $E\in\Dcpl(X)$ the next Lemma appears in ~\cite[top of page 158]{Fx}.  
\begin{lem}
\label{Supp} 
For any\/ $E\in\Dqc(X),$  
\[
\supp(E\>)\subseteq\Supp(E\>);
\]  
and equality holds whenever $E\in\D_{\cc}(X)$. 
\end{lem}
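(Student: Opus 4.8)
The plan is to reduce the statement to a fact about the noetherian local ring $\sO_x$ by a stalkwise computation, and then to invoke Nakayama's lemma for complexes.

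First I would compute stalks. Fix $x,y\in X$. Since taking stalks at $y$ is exact and symmetric monoidal, $(E\Otimes{\sX}k(x))_y\cong E_y\Otimes{\sO_y}(k(x))_y$ in $\D(\sO_y)$. By the description of $k(x)$ recalled at the start of the appendix, $(k(x))_y=0$ unless $y\in\overline{\{x\}}$; and when $y\in\overline{\{x\}}$ the point $x$ corresponds to a prime $\fp\subseteq\sO_y$ with $(\sO_y)_{\fp}=\sO_x$, and $(k(x))_y$ is $\kappa(x)$ regarded as an $\sO_y$-module through $\sO_y\to\sO_x\to\kappa(x)$. Hence for such $y$,
\[
(E\Otimes{\sX}k(x))_y\cong E_y\Otimes{\sO_y}\kappa(x)\cong (E_y)_{\fp}\Otimes{\sO_x}\kappa(x)\cong E_x\Otimes{\sO_x}\kappa(x),
\]
using that $\kappa(x)$ is an $\sO_x$-module and that $(E_y)_{\fp}=E_x$. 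As $x\in\overline{\{x\}}$, it follows that $E\Otimes{\sX}k(x)=0$ in $\D(X)$ if and only if $E_x\Otimes{\sO_x}\kappa(x)=0$ in $\D(\sO_x)$. In other words,
\[
x\in\supp(E)\iff E_x\Otimes{\sO_x}\kappa(x)\neq0,\qquad x\in\Supp(E)\iff E_x\neq0 .
\]
The inclusion $\supp(E)\subseteq\Supp(E)$ is then immediate, since $E_x=0$ forces $E_x\Otimes{\sO_x}\kappa(x)=0$.

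It remains to prove the reverse inclusion when $E\in\D_\cc(X)$, i.e.\ that $C:=E_x\neq0$ implies $C\Otimes{\sO_x}\kappa(x)\neq0$, where now $C$ has finitely generated cohomology over the noetherian local ring $(\sO_x,\mathfrak m_x,\kappa(x))$. I would first treat the case where $C$ is cohomologically bounded above: putting $m:=\sup\{\,i\mid H^i(C)\neq0\,\}$ and using a K-flat resolution $P\to C$ with $P^i=0$ for $i>m$ (which exists because $C\cong\tau^{\le m}C$), right-exactness of $-\otimes_{\sO_x}\kappa(x)$ gives $H^m\big(C\Otimes{\sO_x}\kappa(x)\big)=H^m(C)\otimes_{\sO_x}\kappa(x)$, which is nonzero by the ordinary Nakayama lemma since $H^m(C)$ is a nonzero finitely generated $\sO_x$-module. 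For a general nonzero $C$, pick $n$ with $H^n(C)\neq0$ and apply $-\Otimes{\sO_x}\kappa(x)$ to the truncation triangle $\tau^{\le n}C\to C\to\tau^{>n}C\xto{+}$. Here $\tau^{\le n}C$ is bounded above, coherent and nonzero, so by the previous case $H^n\big(\tau^{\le n}C\Otimes{\sO_x}\kappa(x)\big)=H^n(C)\otimes_{\sO_x}\kappa(x)\neq0$; and $\tau^{>n}C$, being concentrated in degrees $>n$, has $\tau^{>n}C\Otimes{\sO_x}\kappa(x)$ concentrated in degrees $>n$ as well (use a K-flat resolution concentrated in degrees $>n$). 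The long exact cohomology sequence of the triangle then identifies $H^n\big(\tau^{\le n}C\Otimes{\sO_x}\kappa(x)\big)$ with $H^n\big(C\Otimes{\sO_x}\kappa(x)\big)$, so the latter is nonzero and $x\in\supp(E)$.

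The substantive input is thus entirely the local-algebra statement of the last paragraph (a derived form of Nakayama), together with the mild degree bookkeeping needed to descend from the bounded-above case; Step~1 is routine, using only the properties of $k(x)$ recorded in the appendix. The point to watch is that $\D_\cc(X)$ here allows cohomologically unbounded complexes, which is why the truncation argument, rather than a direct minimal-resolution argument, is the safest route for the equality clause.
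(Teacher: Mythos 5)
Your stalkwise reduction, and the bounded-above case of the local statement, are fine: they yield exactly the paper's criterion $x\in\supp(E)\iff E_x\Otimes{\sO_x}\kappa(x)\ne0$ (the paper derives it from the projection formula and $\iota_x^*\R\iota_{x*}\cong\id$ rather than stalks, a cosmetic difference), and the inclusion $\supp(E)\subseteq\Supp(E)$ follows. The gap is in your truncation step. It is not true that $\tau^{>n}C\Otimes{\sO_x}\kappa(x)$ is concentrated in degrees $>n$: derived tensoring with $\kappa(x)$ creates Tor's in \emph{lower} degrees, and a complex with cohomology in degrees $>n$ has no K-flat resolution concentrated in degrees $>n$ unless its cohomology modules are already flat (flat resolutions extend to the left, not the right). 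Concretely, if $\sO_x$ is a discrete valuation ring and $\tau^{>n}C\cong\kappa(x)[-n-1]$, then $H^n\big(\tau^{>n}C\Otimes{\sO_x}\kappa(x)\big)=\tor_1^{\sO_x}\!\big(\kappa(x),\kappa(x)\big)=\kappa(x)\ne0$. Consequently the long exact sequence of $\tau^{\le n}C\to C\to\tau^{>n}C\xto{+}$ does not identify $H^n\big(\tau^{\le n}C\Otimes{\sO_x}\kappa(x)\big)$ with $H^n\big(C\Otimes{\sO_x}\kappa(x)\big)$: both $H^{n-1}$ and $H^n$ of $\tau^{>n}C\Otimes{\sO_x}\kappa(x)$ may be nonzero, and the connecting map can kill the class you are counting on. Since $\D_{\cc}(X)$ in this lemma allows cohomology unbounded above, your argument does not close in exactly the case that needs a new idea.

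The repair is to argue degreewise rather than at a top degree, which is what the paper does: the complexes $C'$ with $E_x\Otimes{\sO_x}C'=0$ form a thick subcategory of $\D(\sO_x)$ containing $\kappa(x)$, hence containing the Koszul complex $K$ on a generating set of the maximal ideal $\mathfrak m_x$ (its total cohomology has finite length, by [DGI, 3.5]); and then $E_x\Otimes{\sO_x}K=0$ together with degreewise finite generation of $H^*(E_x)$ forces $E_x=0$ by [FI, 1.3(2)]. The point of the Koszul step is that one can peel off the generators of $\mathfrak m_x$ one at a time: each partial Koszul tensor product has finitely generated cohomology in every degree, and ordinary Nakayama applied in each degree separately makes no use of any boundedness. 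Substituting this Koszul-complex argument (or a citation of [FI, 1.3(2)]) for your truncation paragraph fixes the proof; as written, the unbounded-above case is a genuine gap.
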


\begin{proof} 
For $E\in\Dqc(X)$, there is a  projection isomorphism\va{-2} 
\[
E\Otimes{\sX}k(x)\cong \R\iota_{x*}\<\big(\iota_x^*E\Otimes{X_x}\widetilde{\kappa(x)}\big).
\]
\vskip-1pt\noindent
Applying $\iota_x^*$ to this isomorphism, and  recalling from \S\ref{cosa:localizing-immersion} that $\iota_x^*\R\iota_{x*}$ is isomorphic to the identity, we get\va{-2}
\[
\iota_x^*\big(E\Otimes{\sX}k(x)\big)\cong \iota_x^*E\Otimes{\sX_{\<\<x}}\widetilde{\kappa(x)}.
\]
These two isomorphisms tell us that $E\Otimes{\sX}k(x)$ vanishes in $\Dqc(X)$ if and only if $\iota_x^*E\Otimes{\sX_{\<\<x}}\widetilde{\kappa(x)}$ vanishes in $\Dqc(X_x)$. 

Moreover, $\iota_x^*E\Otimes{\sX_{\<\<x}}\widetilde{\kappa(x)}$
is the sheafification of $E_x\Otimes{\sO_x}\kappa(x)\in\D(\sO_x)$, and so its vanishing in $\D(X_x)$ (i.e., its being exact) is equivalent to that of $E_x\Otimes{\sO_x}\kappa(x)$ in $\D(\sO_x)$. Thus\va{-3}
\[
x\in\supp(E) \iff E_x\Otimes{\sO_x}\kappa(x)\neq0.
\]

It follows that if $x\in\supp(E)$,  then $E_x\neq0$, that is to say, $x\in\Supp(E)$. So for all $E\in\Dqc(X)$ we have $\supp(E)\subseteq\Supp(E)$. \va2

Now suppose $E\in\D_{\cc}(X)$ and $x\not\in\supp(E)$, i.e., $E_{x} \Otimes{\sO_{x}}\kappa(x)=0$.\va1 Let~$K$ be the Koszul complex on\va{.5} a finite set of generators for the maximal ideal of the local ring $\sO_{x}$. 
It is easy to check\va{.5} that the full subcategory of  $\D(\sO_{x})$ consisting of complexes $C$ such that $E_{x}\Otimes{\sO_{x}}C=0$ is a thick subcategory. It contains $\kappa(x)$, and hence also $K$,\va{.5} since the $\sO_{x}$-module $\oplus_{i\in\ZZ}\>\>\textup{H}^i(K)$
has finite length, see \cite[3.5]{DGI}. Thus\va1 $E_{x}\Otimes{\sO_{x}} K = 0$ in $\D(\sO_{x})$; and since the cohomology of $E_{x}$ is finitely generated in all degrees,\va{.5} \cite[1.3(2)]{FI}
gives~\mbox{$E_{x}=0$.}  Thus, $x\not\in\Supp(E)$;
and so $\supp(E)\supseteq\Supp(E)$.
\end{proof}

\pagebreak[3]
A \emph{localizing tensor ideal} $\sL\subseteq\Dqc(X)$ is a full triangulated subcategory of~$\,\Dqc(X)$, closed under arbitrary direct sums, and such that for all $G\in\sL$ and $E\in\Dqc(X)$, it holds that\va1
$G\Otimes{\sX}E\in\sL$.  

The next Proposition is proved in \cite[\S2]{Nm1} in the affine case; and in 
\cite{AJS} (where localizing tensor ideals are called \emph{rigid localizing subcategories}) the proof 
is extended to noetherian schemes. (Use e.g., \emph{ibid.,} Corollary~4.11 and the bijection in Theorem 4.12, as described at the beginning of its proof.)\va1

\begin{prop}
\label{ideals} 
Let\/ $\sL\subseteq\Dqc(X)$ be a localizing tensor ideal. A complex $E\in\Dqc(X)$ is in\/ $\sL$ if and only if so is $k(x)$ for all\/ $x$ in $\supp(E\>).$
\end{prop}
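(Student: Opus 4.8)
The plan is to recognize this as the classification of localizing tensor ideals of $\Dqc(X)$ by subsets of $X$, and to reduce to the references cited just above---\cite{Nm1} in the affine case, \cite{AJS} in general---for the one nonformal ingredient, supplying the formal parts directly. Write $\langle\sS\rangle$ for the smallest localizing tensor ideal of $\Dqc(X)$ containing a class $\sS$ of objects. First I would record two elementary facts about the $k(x)$: one has $\supp(k(x))=\{x\}$ (immediate from the stalk description of $k(x)$ in the Notation above together with Lemma~\ref{Supp}), and $k(x)\Otimes{\sX}k(y)=0$ whenever $x\ne y$ (by the projection isomorphism used in the proof of Lemma~\ref{Supp}, this reduces to $\widetilde{\kappa(x)}\Otimes{X_y}\widetilde{\kappa(y)}=0$ for distinct points of the local scheme $X_y$, which is classical). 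From the second fact, for fixed $x$ the class $\{G\in\Dqc(X)\mid G\Otimes{\sX}k(x)=0\}$ is again a localizing tensor ideal.

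The ``only if'' direction is formal. Suppose $E\in\sL$ and $x\in\supp(E)$. As $\sL$ is a tensor ideal, $E\Otimes{\sX}k(x)\in\sL$. By the projection isomorphism in the proof of Lemma~\ref{Supp}, $E\Otimes{\sX}k(x)\cong\R\iota_{x*}\bigl(\iota_x^*E\Otimes{X_x}\widetilde{\kappa(x)}\bigr)$, where $\iota_x^*E\Otimes{X_x}\widetilde{\kappa(x)}$ is the sheafification of $E_x\Otimes{\sO_x}\kappa(x)\in\D(\sO_x)$---a complex with $\kappa(x)$-vector-space cohomology, hence isomorphic in $\D(\sO_x)$ to a coproduct $\bigoplus_n\kappa(x)^{(S_n)}[-n]$. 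Applying $\R\iota_{x*}$, which commutes with coproducts ($\iota_x$ being quasi-compact and quasi-separated), gives $E\Otimes{\sX}k(x)\cong\bigoplus_n k(x)^{(S_n)}[-n]$. Since $x\in\supp(E)$, some $S_n$ is nonempty, so $k(x)$ is, up to shift, a direct summand of $E\Otimes{\sX}k(x)$; as $\sL$ is thick, $k(x)\in\sL$.

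For the ``if'' direction, assume $k(x)\in\sL$ for all $x\in\supp(E)$; since $\sL$ contains the generators of $\sL_0\set\langle k(x)\mid x\in\supp(E)\rangle$, it suffices to prove $E\in\sL_0$. This membership is precisely the surjectivity half of the classification theorem of \cite{Nm1} and \cite{AJS}, which I would invoke, indicating its shape as follows. Since $\sL_0$ is generated by a set and $\Dqc(X)$ is well generated, there is a Bousfield localization triangle $\Gamma E\to E\to LE\to\Gamma E[1]$ with $\Gamma E\in\sL_0$ and $\Hom_{\D(X)}(\sL_0,LE)=0$; one shows $\supp(LE)=\varnothing$ and concludes $LE=0$, so $E\cong\Gamma E\in\sL_0\subseteq\sL$. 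For $x\notin\supp(E)$ the vanishing of $LE\Otimes{\sX}k(x)$ is formal: $E\Otimes{\sX}k(x)=0$ by definition of support, and $\Gamma E\Otimes{\sX}k(x)=0$ because $\Gamma E\in\sL_0\subseteq\{G\mid G\Otimes{\sX}k(x)=0\}$ (every generator $k(y)$ of $\sL_0$ has $y\ne x$), so the triangle forces $LE\Otimes{\sX}k(x)=0$.

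The main obstacle is the remaining input: handling $x\in\supp(E)$ in ``$\supp(LE)=\varnothing$'', and the implication ``$\supp(LE)=\varnothing\Rightarrow LE=0$''. These rest on the local-duality/Koszul-complex analysis of \cite{Nm1, AJS}---in particular on the identification, for a noetherian local ring $(\sO_x,\mathfrak{m}_x)$, of the conditions $\R\Hom_{\sO_x}(\kappa(x),-)=0$ and $\kappa(x)\Otimes{\sO_x}-=0$ on $\D(\sO_x)$, and on the joint detection of vanishing in $\Dqc(X)$ by the family $\{k(x)\}_{x\in X}$. Rather than reproduce that analysis, the paper cites \cite{Nm1} and \cite{AJS}, and my proposal does likewise, so that the only genuinely new labor is the bookkeeping in the preceding paragraphs.
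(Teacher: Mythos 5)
Your proposal is correct and takes essentially the same route as the paper: the paper gives no proof of Proposition~\ref{ideals} beyond pointing to \cite[\S2]{Nm1} for the affine case and to \cite[Corollary~4.11 and Theorem~4.12]{AJS} in general, and your argument defers the same nonformal core (the classification input behind the ``if'' direction, i.e.\ the Bousfield-localization and local-to-global analysis) to exactly those references, while the formal bookkeeping you add is sound---in particular the self-contained ``only if'' direction, where $E\Otimes{\sX}k(x)\cong\R\iota_{x*}\bigl(\iota_x^*E\Otimes{X_x}\widetilde{\kappa(x)}\bigr)$ splits into shifted copies of $k(x)$ and thickness of $\sL$ (a localizing subcategory is closed under direct summands) finishes. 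One small imprecision: $\supp(k(x))=\{x\}$ is not immediate from Lemma~\ref{Supp}, which only gives $\supp(k(x))\subseteq\Supp(k(x))=\overline{\{x\}}$; it really rests on your second fact $k(x)\Otimes{\sX}k(y)=0$ for $y\ne x$, but since that first fact carries no weight in your argument this is harmless.
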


For closed subsets of affine schemes the next result is part of \cite [Proposition 6.5]{DG}.

\begin{prop}\label{Hom and tensor 0} 
Let\/  $E\in\Dqc(X)$ be such that $W:=\supp(E)$ is  a union of closed subsets of\/~$X\<$. 
\begin{itemize}
\item[\rm(i)]
For any\/ $F\in\Dqc(X),$\va{-1} 
\begin{align*}
E\Otimes{\sX}F=0&\iff\R\>\sHom_{\sX}(E,F)=0\\
&\iff \RHqc{\sX}(E,F)=0\\
&\iff\R\vG^{}_{\!W}F=0.
\end{align*}
\item[\rm(ii)]
For any morphism\/ $\phi\in\Dqc(X),$\va{-1} 
\begin{align*}
E\Otimes{\sX}\<\phi\textup{ is an isomorphism }
&\iff \R\>\sHom_{\sX}(E,\phi)\textup{ is an isomorphism }\\
&\iff \RHqc{\sX}(E,\phi)\textup{ is an isomorphism }\\
&\iff \R\vG^{}_{\!W}\phi\textup{ is an isomorphism.}
\end{align*}
\end{itemize}
\end{prop}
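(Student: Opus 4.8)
The plan is to prove (i) by showing that all four vanishing conditions are equivalent to the single one $\R\vG^{}_{\!W}F=0$, organised around the localizing tensor ideal of complexes supported on $W$, and then to deduce (ii) from (i) by passing to mapping cones. For that last reduction: each of $E\Otimes{\sX}(-)$, $\R\sHom_\sX(E,-)$, $\RHqc{\sX}(E,-)$ and $\R\vG^{}_{\!W}(-)$ is a triangulated functor, so for $\phi\colon F'\to F''$ in $\Dqc(X)$, completing to a triangle $F'\xto{\phi}F''\to F\to F'[1]$ with $F\in\Dqc(X)$, any one of these functors takes $\phi$ to an isomorphism if and only if it annihilates $F$; thus (ii) for $\phi$ is precisely (i) for this $F$.

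Next I would set up the key object. Let $\mathcal T_W\subseteq\Dqc(X)$ be the full subcategory of complexes exact outside $W$, i.e.\ the essential image of $\R\vG^{}_{\!W}$ (as used right after Proposition~\ref{Gampsi}); it is a localizing tensor ideal. Since $W$ is specialization-closed, $\Supp k(x)=\overline{\{x\}}\subseteq W$ for $x\in W$, so $k(x)\in\mathcal T_W$, while $\supp G\subseteq\Supp G\subseteq W$ for $G\in\mathcal T_W$ by Lemma~\ref{Supp}. Feeding these facts and the hypothesis $\supp E=W$ into Proposition~\ref{ideals} gives $E\in\mathcal T_W$; applying the same criterion to $\langle E\rangle_\otimes$ gives $k(x)\in\langle E\rangle_\otimes$ for all $x\in W$, hence $G\in\langle E\rangle_\otimes$ for every $G\in\mathcal T_W$; so the localizing tensor ideal $\langle E\rangle_\otimes$ generated by $E$ equals $\mathcal T_W$. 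Moreover $\R\vG^{}_{\!W}$ restricts to the identity on $\mathcal T_W$ (because $\R\vG^{}_{\!W}\R\vG^{}_{\!W}\cong\R\vG^{}_{\!W}$), whence $\mathcal T_W=\langle\,\R\vG^{}_{\!W}G:G\in\Dqc(X)\,\rangle_{\mathrm{loc}}$. I will also use the ``smashing'' isomorphism $\R\vG^{}_{\!W}(-)\cong\R\vG^{}_{\!W}\OX\Otimes{\sX}(-)$ and the Greenlees--May duality of \cite{AJL1}, which identifies both $\R\sHom_\sX(\R\vG^{}_{\!W}\OX,-)$ and $\RHqc{\sX}(\R\vG^{}_{\!W}\OX,-)$ with (a version of) the derived-completion functor $\LL\Lambda_W$, together with the isomorphism $\LL\Lambda_W F\cong\LL\Lambda_W\R\vG^{}_{\!W}F$.

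Now the equivalences of (i). For \emph{(a)}$\iff$\emph{(d)}: $\{A\in\Dqc(X):A\Otimes{\sX}F=0\}$ is a localizing tensor ideal, hence contains $E$ iff it contains $\mathcal T_W=\langle\R\vG^{}_{\!W}G:G\rangle_{\mathrm{loc}}$ iff $\R\vG^{}_{\!W}G\Otimes{\sX}F\cong G\Otimes{\sX}\R\vG^{}_{\!W}F$ vanishes for all $G\in\Dqc(X)$ iff $\R\vG^{}_{\!W}F=0$. For \emph{(d)}$\Rightarrow$\emph{(b)} and \emph{(d)}$\Rightarrow$\emph{(c)}: if $\R\vG^{}_{\!W}F=0$ then $\LL\Lambda_W F\cong\LL\Lambda_W\R\vG^{}_{\!W}F=0$, so $\R\sHom_\sX(\R\vG^{}_{\!W}\OX,F)=0$ and $\RHqc{\sX}(\R\vG^{}_{\!W}\OX,F)=0$; the classes $\{G\in\D(X):\R\sHom_\sX(G,F)=0\}$ and $\{G\in\Dqc(X):\RHqc{\sX}(G,F)=0\}$ are localizing (a coproduct in the first variable goes to a product), and since $\R\sHom_\sX(\R\vG^{}_{\!W}G,F)\cong\R\sHom_\sX(G,\R\sHom_\sX(\R\vG^{}_{\!W}\OX,F))$ by smashing and Hom-tensor adjunction (and likewise for $\RHqc{\sX}$ via the closed structure on $\Dqc(X)$), both contain every $\R\vG^{}_{\!W}G$, hence contain $\mathcal T_W\ni E$. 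That \emph{(b)}$\Rightarrow$\emph{(c)} is immediate from $\RHqc{\sX}=\id_X^\times\R\sHom_\sX$. For \emph{(c)}$\Rightarrow$\emph{(d)}: by the adjunction $(-)\Otimes{\sX}E\dashv\RHqc{\sX}(E,-)$ on $\Dqc(X)$, condition (c) is equivalent to $\Hom_{\D(X)}(G\Otimes{\sX}E,F[n])=0$ for all $G\in\Dqc(X)$ and all $n$; the localizing subcategory $\{H:\Hom_{\D(X)}(H,F[n])=0\text{ for all }n\}$ then contains $\langle G\Otimes{\sX}E:G\rangle_{\mathrm{loc}}=\langle E\rangle_\otimes=\mathcal T_W$, in particular it contains $\R\vG^{}_{\!W}F$; now the isomorphism $\Hom_{\D(X)}(G,\R\vG^{}_{\!W}F)\cong\Hom_{\D(X)}(G,F)$ for $G\in\mathcal T_W$ (again from just after Proposition~\ref{Gampsi}), applied with $G=\R\vG^{}_{\!W}F$, forces $\Hom_{\D(X)}(\R\vG^{}_{\!W}F,\R\vG^{}_{\!W}F)=0$, so $\R\vG^{}_{\!W}F=0$.

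The main obstacle I expect is twofold. First, correctly pinning down the several descriptions of $\mathcal T_W$ in the second paragraph — this is where Proposition~\ref{ideals} is indispensable and where the specialization-closedness of $W$ enters. Second, the implications (d)$\Rightarrow$(b),(c): whereas $E\Otimes{\sX}(-)$ and $\R\vG^{}_{\!W}(-)$ are smashing and can therefore be compared through the generators $k(x)$ and through supports, the functors $\R\sHom_\sX(E,-)$ and $\RHqc{\sX}(E,-)$ are not, and it is precisely the Greenlees--May identification of $\R\sHom_\sX(\R\vG^{}_{\!W}\OX,-)$ with derived completion \cite{AJL1} (and the vanishing of $\LL\Lambda_W$ on objects killed by $\R\vG^{}_{\!W}$) that bridges the gap.
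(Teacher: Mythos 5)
Your architecture is sound and genuinely different from the paper's. You reduce everything to the single condition $\R\vG^{}_{\!W}F=0$ after identifying the localizing tensor ideal generated by $E$ with $\mathcal T_W$ (a correct use of Proposition~\ref{ideals} together with Lemma~\ref{Supp}), and your steps (a)$\Leftrightarrow$(d), (b)$\Rightarrow$(c), (c)$\Rightarrow$(d), as well as the mapping-cone reduction of (ii) to (i), are fine as written (the last is exactly the paper's reduction). The paper instead never leaves the two ideals $\sL=\{\,C: C\Otimes{\sX}F=0\,\}$ and $\sL'=\{\,C:\R\>\sHom_{\sX}(C,F)=0\,\}$: using \cite[Lemma 3.4]{T} it picks, for each $x\in W$, a perfect complex $C$ with $\Supp C=\overline{\{x\}}\subseteq W$, and exploits $\R\>\sHom_\sX(C'\<,F)\cong C\Otimes{\sX}F$ for the dual perfect complex $C'$ to shuttle the $k(x)$ between $\sL$ and $\sL'$; this gives the tensor/Hom equivalence (and its $\RHqc{\sX}$ analogue) symmetrically in one stroke, the $\R\vG^{}_{\!W}$ condition then entering only through $\supp(\R\vG^{}_{\!W}\OX)=W$ and the smashing isomorphism. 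So the two arguments really diverge only at your step (d)$\Rightarrow$(b),(c)---and that is where there is a gap as written.

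You invoke Greenlees--May duality from \cite{AJL1} to identify $\R\>\sHom_{\sX}(\R\vG^{}_{\!W}\OX,-)$ with a derived completion $\LL\Lambda_W$ and to use $\LL\Lambda_W F\cong\LL\Lambda_W\R\vG^{}_{\!W}F$. But \cite{AJL1} treats a \emph{closed} subscheme $Z$; for $W$ an arbitrary (possibly infinite) union of closed sets no completion functor along $W$ is defined there, and the MGM-type isomorphism you need for such $W$ is precisely the nontrivial point---it is not covered by the citation. The step is repairable inside your own framework: by Lemma~\ref{suppRg} and Proposition~\ref{ideals}, $\mathcal T_W$ is the localizing tensor ideal generated by $\{\,\R\vG^{}_{\!Z}\OX\mid Z\subseteq W\ \textup{closed}\,\}$; if $\R\vG^{}_{\!W}F=0$, then $\R\vG^{}_{\!Z}F\cong\R\vG^{}_{\!Z}\R\vG^{}_{\!W}F=0$ for each such $Z$, hence $F\cong\R\>j_{Z*}\>j_Z^*F$ with $j_Z$ the inclusion of the open complement of $Z$, whence
\[
\R\>\sHom_{\sX}(\R\vG^{}_{\!Z}\OX,F)\cong\R\>j_{Z*}\R\>\sHom(j_Z^*\R\vG^{}_{\!Z}\OX,\>j_Z^*F)=0,
\]
and your localizing-ideal argument (with the adjunction isomorphism you already wrote down) then yields (b) and (c) without any mention of $\LL\Lambda$. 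Alternatively the paper's Thomason-dual device sidesteps completion-type functors altogether. With that one step patched, your proof is complete; its advantage is that it makes the generator $\R\vG^{}_{\!W}\OX$ and the equivalence with $\R\vG^{}_{\!W}F=0$ the organizing principle, whereas the paper's route buys a shorter, symmetric treatment of the two Hom functors at the cost of invoking Thomason's classification.
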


\begin{proof}
Let $\sL\subseteq\Dqc(X)$ (resp.~$\sL' \subseteq\Dqc(X))$ be the full subcategory spanned by the complexes $C$ such that $C\Otimes{\sX}F=0$ (resp.~$\R\>\sHom_{\sX}(C,F)=0$). It is clear that $\sL$ is a localizing tensor ideal;\va{.6} and using the natural isomorphisms (with $G\in\Dqc(X)$),\va{-2}
\[
\begin{aligned}\label{eqnA3}
\R\>\sHom_{\sX}\big(\!\oplus_{i\in I}C_i\>,\>F\big)&\cong\prod_{i\in I}\,\R\>\sHom_{\sX}(C_i\>,\>F),\\
\R\>\sHom_{\sX}(G\Otimes{\sX}C,F)&\cong\R\>\sHom_{\sX}(G,\R\>\sHom_{\sX}(C,F))\\[1pt]
\end{aligned}\tag{\ref{Hom and tensor 0}.1}
\]
\vskip-2pt\noindent
one sees that $\sL'$ is a localizing tensor ideal too.

We claim that when $E$ is in $\sL$ it is also in $\sL'$. For this it's enough,~by Proposition~\ref{ideals}, that for any $x\in W\<$, $k(x)$ be in $\sL'$.  By \cite[Lemma~3.4]{T},  there is a perfect $\OX$-complex $C$ such that $\Supp(C)$ is the  closure $\overline{\{x\}}$. \va{-1}We have\looseness=-1 
\[\supp(C)=\Supp(C)=\overline{\{x\}}\subseteq W, 
\]
where the first equality holds by Lemma~\ref{Supp} and the inclusion holds because $W$ is a union of closed sets. Thus \ref{ideals} yields $C\in\sL$; and the dual complex $C'\set\R\>\sHom_\sX(C,\OX)$ is in~$\sL'$, because $\R\>\sHom_\sX(C'\<,F)\cong C\Otimes{\sX} F=0$. Since\va{-1}
\[
x\in\supp(C) =\Supp(C)=\Supp(C')=\supp(C'),
\]
\vskip-1pt\noindent
therefore \ref{ideals} gives that, indeed, $k(x)\in\sL'\<$.
 
 \pagebreak[3]
Similarly, if $E\in\sL'$ then $E\in\sL$, proving the first part of (i). \va2

The same argument holds with $\RHqc{\sX}$ in place of $\R\>\sHom_{\sX}$. (After that replacement, the isomorphisms~\eqref{eqnA3} still hold if $\>\displaystyle\prod$ is prefixed
by $\id_X^\times$: this can be checked by applying the functors $\Hom_{\sX}(H,-)$
for all $H\in\Dqc(X)$.)\va2

As for the rest, recall that $\R\vG^{}_{\!W}\OX\in\Dqc(X)$: when $W$ itself is closed, this results from the standard triangle (with $w\colon X\setminus W\hookrightarrow X$ the inclusion) 
\[
\R\vG^{}_{\!W}\OX\to\OX\to \R w_*w^*\OX\xto{\,+\,}(\R\vG^{}_{\!W}\OX)[1]
\]
(or from the local representation of $\R\vG^{}_{\!W}\OX$ by a $\dirlm{}\!$ of Koszul complexes); and then for the general case,\va{-.5} use that $\vG^{}_W=\dirlm{}\,\vG^{}_{\!Z}$ where $Z$ runs through all closed subsets of $W\<$.  

By the following Lemma,  $\supp(\R\vG^{}_{\!W}\OX) = W\<$, so Proposition~\ref{ideals} implies that $E\in\sL$\va{.5} if and only if $\R\vG^{}_{\!W}\OX\in\sL$,
i.e., $E\Otimes{\sX}F=0$ if and only if $\R\vG^{}_{\!W}\OX\Otimes{\sX} F = 0$. 

\pagebreak
The last part of (i) results then from\va{.6} the standard isomorphism 
$\R\vG^{}_{\!W}\OX\Otimes{\sX} F \cong \R\vG^{}_{\!W}F$  (for which see, e.g., \cite[3.1.4(i) or 3.2.5(i)]{AJL1}\va1 when $W$ itself is closed, then pass to the general case using $\vG^{}_W=\dirlm{}\,\vG^{}_{\!Z}\>).$ 
And applying (i) to the third vertex of a triangle based on $\phi$ gives (ii).
\end{proof}

\begin{lem}
\label{suppRg}
 If\/ $W$ is a union of closed subsets of~
$X\<,$ then
\(
\supp(\R\vG^{}_{\!W}\OX) = W.\va{-3}
\)
\end{lem}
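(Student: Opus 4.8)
The plan is to compute $\supp(\R\vG^{}_{\!W}\OX)$ directly from the definitions, reducing first to the case where $W$ is closed and then, within that case, testing membership point by point. Fix $x\in X$ and recall that by the argument inside the proof of Lemma \ref{Supp} we have $x\in\supp(E)$ if and only if the complex $E_x\Otimes{\sO_x}\kappa(x)$ is nonzero in $\D(\sO_x)$. So I would first analyze the stalk $(\R\vG^{}_{\!W}\OX)_x$. When $W$ is closed, the localization of $\R\vG^{}_{\!W}\OX$ at $x$ is $\R\vG^{}_{\!W\cap\spec\sO_x}(\sO_x)$ — i.e.\ local cohomology of the local ring with support in the ideal cutting out $W$ locally — which (using, say, the Koszul/ \v{C}ech description) vanishes precisely when $x\notin W$. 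For the general case $W=\bigcup Z$, using $\vG^{}_W=\dirlm{}\,\vG^{}_{\!Z}$ and the fact that filtered colimits commute with taking stalks and with $\Otimes{}$, one reduces the computation of $(\R\vG^{}_{\!W}\OX)_x\Otimes{\sO_x}\kappa(x)$ to the closed pieces.

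The two directions then go as follows. If $x\notin W$, then since $W$ is a union of closed sets and $X$ is noetherian, there is an open neighborhood $U\ni x$ with $U\cap W=\varnothing$; hence $(\R\vG^{}_{\!W}\OX)|_U=0$, so in particular its stalk at $x$ vanishes and $x\notin\supp(\R\vG^{}_{\!W}\OX)$. This already gives $\supp(\R\vG^{}_{\!W}\OX)\subseteq W$. Conversely, suppose $x\in W$; then $x$ lies in some closed $Z\subseteq W$, and I must show $(\R\vG^{}_{\!Z}\OX)_x\Otimes{\sO_x}\kappa(x)\ne 0$ — whence, by the colimit compatibility and the inclusion $Z\subseteq W$, also $(\R\vG^{}_{\!W}\OX)_x\Otimes{\sO_x}\kappa(x)\ne0$, i.e.\ $x\in\supp(\R\vG^{}_{\!W}\OX)$. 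So everything comes down to: for a noetherian local ring $(A,\mathfrak m,k)$ and a proper ideal $I\subseteq\mathfrak m$, the derived tensor product $\R\vG_I(A)\Otimes{A}k$ is nonzero in $\D(A)$. This holds because $\R\vG_I(A)$ is computed by the (bounded, finite free) Koszul-type complex $\check C^{\bullet}$ on generators of $I$, and tensoring a bounded-above complex of finite free modules that is not acyclic with $k$ cannot kill it: more concretely, $\textup H^{\ell}_I(A)\ne0$ for $\ell=\operatorname{depth}_I(A)\le\dim A/I<\infty$ (the grade is finite since $I\subsetneq\mathfrak m$ and $A$ is noetherian), and the lowest nonvanishing cohomology of $\R\vG_I(A)$ survives after $\Otimes{A}k$ by Nakayama applied to the terms of the Koszul complex — or, cleanly, one invokes \cite[1.3(2)]{FI} as in the proof of Lemma \ref{Supp} to see that a complex with finitely generated cohomology, bounded above, whose derived reduction mod $\mathfrak m$ vanishes must itself be acyclic, contradicting $\R\vG_I(A)\ne0$.

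The main obstacle I anticipate is keeping the colimit $\vG^{}_W=\dirlm{}\,\vG^{}_{\!Z}$ straight at the derived level, i.e.\ checking that $\R\vG^{}_{\!W}\OX=\dirlm{}\,\R\vG^{}_{\!Z}\OX$ and that this colimit is compatible with stalks and with $\Otimes{}k(x)$; this is precisely the device already used in the proof of Proposition \ref{Hom and tensor 0} to reduce statements about general $W$ to the closed case, so I would cite that usage. Everything else — the local-cohomology nonvanishing and the openness of the complement of $x$ when $x\notin W$ — is routine noetherian commutative algebra, and I would present it briefly, referring to the Koszul-complex computation and to \cite[1.3(2)]{FI} for the nonvanishing after reduction mod the maximal ideal, exactly as in Lemma \ref{Supp}.
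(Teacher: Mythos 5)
Your overall strategy (test membership stalkwise via $x\in\supp(E)\Leftrightarrow E_x\Otimes{\sO_x}\kappa(x)\ne0$, then reduce to closed $Z\subseteq W$ through $\vG^{}_{\!W}=\dirlm{}\,\vG^{}_{\!Z}$) can be made to work, but two of your steps fail as stated. First, the claim that $x\notin W$ admits an open neighborhood $U$ with $U\cap W=\varnothing$ is false: $W$ may be an \emph{infinite} union of closed sets, and then its complement need not be open even in a noetherian scheme---take $X=\spec\ZZ$, $W$ the set of all closed points, $x$ the generic point. (The inclusion $\supp(\R\vG^{}_{\!W}\OX)\subseteq W$ is still true, and the colimit reduction you already set up repairs this: for each closed $Z\subseteq W$ one has $x\notin Z$, hence $(\R\vG^{}_{\!Z}\OX)_x=0$, and a filtered colimit of zeros is zero.) Second, your justification of the key nonvanishing $\R\vG^{}_{\!I}(A)\Otimes{A}\kappa\ne0$ (with $A=\sO_x$ and $I\subseteq\mathfrak m$ the ideal of $Z$) is flawed: both the appeal to \cite[1.3(2)]{FI} and the Nakayama argument require finitely generated cohomology, whereas local cohomology modules $H^i_I(A)$ are generally not finitely generated. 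Indeed, for $A$ a discrete valuation ring and $I=\mathfrak m$, the module $H^1_{\mathfrak m}(A)\cong A_t/A$ is divisible, so the ``lowest nonvanishing cohomology'' is killed by $\otimes_A\kappa$; the nonvanishing of $\R\vG^{}_{\!\mathfrak m}(A)\Otimes{A}\kappa$ occurs in degree $0$, through a Tor term. The correct (and easier) argument is direct: computing with the stable Koszul (\v{C}ech) complex on generators $f_1,\dots,f_r\in I\subseteq\mathfrak m$, each localization $\kappa_{f_i}$ vanishes, so $\R\vG^{}_{\!I}(A)\Otimes{A}\kappa\cong\R\vG^{}_{\!I}(\kappa)=\kappa\ne0$.

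Once that is fixed, note also that your passage from one closed $Z\ni x$ to the colimit over all closed $Z'\subseteq W$ needs a word: in a filtered colimit, nonvanishing of a single term does not by itself give nonvanishing of the colimit (transition maps such as $\R\vG^{}_{\!\mathfrak m}(A)\to\R\vG^{}_{\!\mathfrak p}(A)$ can kill cohomology). Here it is saved by the computation above: for every closed $Z'\subseteq W$ containing $x$ the term is canonically $\kappa(x)$ in degree $0$ and the transition maps are the identity, so the colimit is $\kappa(x)\ne0$. For comparison, the paper's proof avoids stalks altogether: it uses the isomorphism $\R\vG^{}_{\!W}\OX\Otimes{\sX}k(x)\cong\R\vG^{}_{\!W}\>k(x)$ established just before the Lemma, notes that $k(x)$ is flasque, so $\R\vG^{}_{\!W}\>k(x)=\vG^{}_{\!W}\>k(x)$, and observes that this is nonzero exactly when $\overline{\{x\}}\subseteq W$, i.e.\ when $x\in W$---a two-line argument that sidesteps all three issues above.
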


\begin{proof} As seen a few lines back, $(\R\vG^{}_{\!W}\OX)\Otimes{\sX}k(x)\cong
\R\vG^{}_{\!W}k(x)$ for \mbox{any $x\in X\<$.}\va{.4} As $k(x)$ is flasque, the canonical map $\vG^{}_{\!W}k(x)\to\R\vG^{}_{\!W}k(x)$ is
an isomorphism.
The assertion is then that $\vG^{}_{\!W}k(x)\ne0\iff x\in W$ (i.e., $\overline{\{x\}}\subset W)$,\va{-.4} which is easily verified since $k(x)$ is constant on $\overline{\{x\}}$ and vanishes elsewhere. 
\end{proof}

\begin{lem}\label{L1.-1} 
Let\/ $u\colon W\to X$ be a localizing immersion, and\/ $F\in\Dqc(X)$.
The following conditions are equivalent.

{\rm (i)} $\supp(F)\subseteq W\<$.

 {\rm (ii)} The canonical map is an isomorphism\/ $F\iso\R u_*u^*\<\< F$.
 
 {\rm (iii)} $F\cong\R u_*G$ for some\/ $G\in\Dqc(W)$.
\end{lem}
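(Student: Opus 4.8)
The strategy is to prove the chain (ii)$\Rightarrow$(iii)$\Rightarrow$(ii), then (iii)$\Rightarrow$(i), then (i)$\Rightarrow$(ii); the last implication is where the support dictionary of this appendix, in particular Proposition~\ref{ideals}, does the real work. The implication (ii)$\Rightarrow$(iii) is trivial: take $G\set u^*F$. For (iii)$\Rightarrow$(ii), suppose $F\cong\R u_*G$. The triangle identities for the adjunction $u^*\dashv\R u_*$ say that the composite $\R u_*\xto{\eta^{}_1\R u_*}\R u_*u^*\R u_*\xto{\R u_*\epsilon^{}_1}\R u_*$ is the identity, where $\eta^{}_1$ is the unit and $\epsilon^{}_1$ the counit. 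By \S\ref{cosa:localizing-immersion} the map $\epsilon^{}_1$ is an isomorphism, hence so is $\R u_*\epsilon^{}_1$, and therefore so is $\eta^{}_1\R u_*$; by naturality of $\eta^{}_1$ the canonical map $F\to\R u_*u^*F$ is then an isomorphism, which is (ii).

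For (iii)$\Rightarrow$(i), recall that $u$ maps $W$ homeomorphically onto a subset $u(W)\subseteq X$ which is stable under generization, and that under this identification $u^*$ is restriction to $W$; as elsewhere in the paper we regard $W$ as the subspace $u(W)$. Let $x\in X\setminus W$. The stalk of $k(x)$ at a point $y$ is $\kappa(x)$ when $y$ is a specialization of $x$ and $0$ otherwise; if some point of $W$ were a specialization of $x$, then $x$, being a generization of a point of $W$, would itself lie in $W$ — a contradiction. Hence $u^*k(x)=0$, so by the projection isomorphism
\[
F\Otimes{\sX}k(x)\cong\R u_*\big(u^*F\Otimes{W}u^*k(x)\big)=0
\]
(using $u^*F\cong G$), whence $x\notin\supp(F)$. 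Thus $\supp(F)\subseteq W$.

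It remains to prove (i)$\Rightarrow$(ii); note that $W=u(W)$ need not be a union of closed subsets of $X$, so Proposition~\ref{Hom and tensor 0} is not available. Let $\sL\subseteq\Dqc(X)$ be the full subcategory of those $C$ for which the canonical map $C\to\R u_*u^*C$ is an isomorphism. Then $\sL$ is a localizing tensor ideal: it is a triangulated subcategory because $\R u_*u^*$ is a triangulated functor and, in a triangulated category, a morphism of triangles two of whose three components are invertible has the third invertible as well; it is closed under coproducts because $u^*$ and $\R u_*$ preserve them (the latter since $\R u_*$ has a right adjoint $u^\times$, \S\ref{^times}); and it is a tensor ideal because, via the projection isomorphism, the canonical map for $C\Otimes{\sX}E$ is identified with the canonical map for $C$ tensored with $\id_E$. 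By Proposition~\ref{ideals}, $F\in\sL$ as soon as $k(x)\in\sL$ for every $x\in\supp(F)$, and by (i) we have $\supp(F)\subseteq W$. So fix $w\in W$ and set $x\set u(w)$. Since a localizing immersion induces isomorphisms on local rings, $\sO_{W,w}\cong\sO_{X,x}$, and the canonical map $X_x\to X$ factors as $X_x=W_w\xto{\iota^W_w}W\xto{u}X$; hence
\[
k(x)=\iota_{x*}\widetilde{\kappa(x)}=u_*\iota^W_{w*}\widetilde{\kappa(w)}=\R u_*\big(k_W(w)\big),
\]
where $k_W(w)$ is the flasque quasi-coherent $\OW$-module defined from $w$ on $W$ exactly as $k(\cdot)$ is defined on $X$ (the last equality because $k_W(w)$ is flasque). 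By the already-established implication (iii)$\Rightarrow$(ii), $k(x)\in\sL$. Therefore $F\in\sL$, i.e.\ $F\iso\R u_*u^*F$.

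The only points requiring genuine care lie in the last implication: checking that $\sL$ is a localizing tensor ideal — above all that it is a tensor ideal, which rests on the compatibility of the projection isomorphism with the unit of $u^*\dashv\R u_*$ — and the identification $k(u(w))\cong\R u_*k_W(w)$, which is precisely what lets Proposition~\ref{ideals} conclude. Everything else is formal given the results of this appendix and \S\ref{cosa:localizing-immersion}.
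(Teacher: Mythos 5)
Your proof is correct and follows essentially the same route as the paper's: the counit/unit argument of Remark~\ref{L1.1.25} for (iii)$\Rightarrow$(ii), the generization (closure) argument for (iii)$\Rightarrow$(i), and, for the hard direction, the observation that the relevant class of complexes is a localizing tensor ideal so that Proposition~\ref{ideals} reduces everything to $k(x)$ with $x\in W$, which is handled by factoring $\iota_x$ through $u$. The only differences are bookkeeping ones: you run the reduction through the class where the unit $F\to\R u_*u^*F$ is an isomorphism rather than the class of objects of the form $\R u_*G$ (the paper's choice avoids the projection-formula/unit compatibility and instead cites Neeman for coproduct-preservation, where you use the right adjoint $u^\times$), and these classes coincide by the equivalence you have already established.
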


\begin{proof} As in Remark~\ref{L1.1.25} , the canonical map 
$\R u_*G\to \R u_*u^*\R u_*G$ is an isomorphism,  whence (iii)$\Rightarrow$(ii); and the converse implication is trivial.\va1

Next,  if $x\notin W$ then $\overline{\{x\}}\cap W=\phi\>$: to see this, one reduces easily to the case in which  $u$ is the natural map $\spec A_M\to \spec A$,
where $M$ is a multiplicatively closed subset of the noetherian ring $A$ 
(see \S\ref{locimm}). Since $k(x)$~vanishes outside $\overline{\{x\}}$, it follows
that $u^*k(x)=0$ whenever $x\notin W$. Using the projection isomorphism 
\(
\R\>u_*G\Otimes{\sX} k(x)\cong \R\>u_*(G\Otimes{W}u^*k(x)),
\)
one~sees then that (iii)$\Rightarrow$(i).

The complexes $F\in\Dqc(X)$ satisfying (i) span a localizing tensor ideal. 
So do those $F$ satisfying (iii): the full subcategory 
\mbox{$\D_3\subseteq\Dqc(X)$} spanned by them is triangulated, as one finds by applying $\R u_*u^*$ to a triangle
based on a $\Dqc(X)$-map $\R u_*G_1\to\R u_*G_2\>$; $\D_3$ is closed under direct sums (since $\R u_*$ respects direct sums, see \cite[Lemma 1.4]{Nm2}, whose proof---in view~of the equivalence of categories mentioned above just before
\ref{affine locimm}---applies to $\Dqc(X))$;
and $\D_3$ is a tensor ideal since $\R u_*G\Otimes{\sX} E \cong \R u_*(G\Otimes{W}u^*\<\<E)$ for all
$E\in\Dqc(X)$.
So \ref{ideals} shows that for the implication (i)$\Rightarrow$(iii) we need only treat the case $F=k(x)$.

Since $\supp(k(x))=x$ (see, e.g., \cite[4.6, 4.7\kf]{AJS}), it suffices now to note 
that if $x\in W$ then 
\(
\sO_{W\<,\>x}=\sO_{\<\<X\<,\>x},
\) 
so the canonical map\va1 
\mbox{$\iota_x:W_x=X_x\to X$} in the definition of $k(x)$ (near the beginning of this Appendix) factors as 
$X_x\to W\xto{\lift.5,u,\>\>}X\<$, whence~$k(x)=\R\iota_{x*}\widetilde{\kappa(x)}$ satisfies (iii).  
\end{proof}

\begin{subrem}\label{supp u_*} With $u$ as in \ref{L1.-1}, one checks that if $x\in W$ then 
(with self-explanatory notation)  $u^*k(x)_\sX=k(x)_W$.
Also, as above, if~$x\notin W$ then
$u^*k(x)_\sX=0$. So for $E\in\Dqc(W)$,
\[
\R\>u_*E\Otimes{\sX}k(x)_\sX \cong \R\>u_*\big(E\Otimes{W} u^*k(x)_\sX\big) \cong
\begin{cases}
 0  & \text{if $x\notin W,$} \\
\R u_*\big(E\Otimes{W} k(x)_W\big) & \text{if $x\in W;$}
\end{cases}
\]
and since for $F\in\Dqc(W)$,  
$[0=F\cong u^*\R u_*F\>]\iff [\R u_*F=0\>]$, therefore
\[
\supp_\sX(\R\>u_*E)=\supp_W(E). \\[4pt]
\]

\end{subrem}

\pagebreak

\end{document}